\DeclareMathOperator{\tr}{tr}
\newcommand{\R}{\ensuremath{\mathbb{R}}}
\newcommand{\E}{\mathbb{E}}
\newcommand{\x}{\bm x}
\newcommand{\y}{\bm y}
\newcommand{\w}{\bm w}
\newcommand{\Sp}{\bm S_p}
\newcommand{\F}{\bm F}
\newcommand{\bI}{\bm I}
\newcommand{\bS}{\bm{\hat\Sigma}}
\newcommand{\bSa}{\bm{\hat\Sigma}(\alpha)}
 \newcommand\be{\begin{equation}} \newcommand\ee{\end{equation}}
\newcommand\bea{\begin{eqnarray}}
\newcommand\eea{\end{eqnarray}}
\newcommand{\on}{{i=1}^n}
\newcommand{\vast}{\bBigg@{4}}
\newcommand{\Vast}{\bBigg@{5}}
\newtheorem{thm}{Theorem}
\newtheorem{lem}{Lemma}
\newtheorem{prop}{Proposition}
\newtheorem{rek}{Remark}
\begin{document}

\begin{frontmatter}

\title{Robust Sparse Covariance Estimation by Thresholding Tyler's M-Estimator}
\runtitle{Robust Sparse Covariance Estimation}

\begin{aug}
\author{\fnms{John} \snm{Goes}\ead[label=e1]{johngoes@umn.edu}}
\address{University of Minnesota\\
School of Mathematics \\
\printead{e1}}
\author{\fnms{Gilad} \snm{Lerman}\ead[label=e2]{lerman@umn.edu}}
\address{University of Minnesota\\
School of Mathematics \\
\printead{e2}}
\author{\fnms{Boaz} \snm{Nadler}\ead[label=e3]{boaz.nadler@weizmann.ac.il}}
\address{Weizmann Institute of Science\\
Department of Computer Science and Applied Mathematics \\
\printead{e3}}

%

\affiliation{
University of Minnesota
and
Weizmann Institute of Science
}

\runauthor{Goes, Lerman and Nadler}
\end{aug}

\begin{abstract}
    Estimating a high-dimensional sparse covariance matrix
from a limited number of samples is a fundamental problem in contemporary
data analysis. Most proposals to date, however, are not robust to outliers
or heavy tails. Towards bridging this gap, in this work we consider estimating a sparse shape matrix
from $n$ samples following a possibly heavy tailed elliptical distribution. We propose estimators based on thresholding either Tyler's M-estimator or its regularized
variant.
We derive bounds on the difference in spectral norm between our estimators and the shape matrix in the joint
limit as the dimension $p$ and sample size $n$ tend to infinity with
$p/n\to\gamma>0$. These bounds are minimax rate-optimal.
Results on simulated data support our theoretical analysis.
\end{abstract}

\begin{keyword}[class=MSC]
\kwd[Primary ]{62H12}
\kwd[; secondary ]{62G35}
\kwd{62G20}
\end{keyword}

\begin{keyword}
\kwd{covariance matrix estimation}
\kwd{spectral norm}
\kwd{elliptical distribution}
\kwd{sparsity}
\kwd{Tyler's M-estimator}
\kwd{thresholding}
\end{keyword}



\end{frontmatter}

\section{Introduction}


The covariance matrix \(\bm \Sigma \) of a \(p\)-dimensional random variable $X$ is a central object in statistical data analysis. Given $n$ observations $\{\bm{x}_i\}_{i=1}^n$, accurately estimating this matrix is of great importance for many tasks including PCA, clustering and discriminant analysis \citep{anderson,mkb79}.
The
sample covariance matrix, which is the standard estimator for \(\bm\Sigma\),  is quite accurate when the random variable \(X\) is sub-Gaussian and \(p\ll n \).

In several contemporary applications, however, the number of samples \(n\)
and the dimension \(p\) are comparable,
and the data may be heavy tailed.
To accurately estimate the covariance matrix when  $n$ and $p$ are comparable, additional assumptions, such as its approximate sparsity are typically made. Over
 the past decade several sparse covariance matrix estimators were proposed and
 analyzed \citep{CT,cai2011adaptive,karoui,lam2009sparsistency,rothman2009generalized}. In
 addition, minimax lower bounds for estimating sparse covariance matrices in
 high-dimensional settings were established
 \citep{minimax,cai2012minimax_l1,cai2016estimating}.

With respect to heavy tailed data, a popular model which we consider in this
work is the  elliptical distribution
\citep{cambanis1981,fang1990,frahmthesis,kelker1970}.
An elliptical distribution is characterized by a $p\times p$ shape or scatter matrix \(\Sp\), which equals a multiple of its population covariance matrix, when the latter exists. Since an elliptical distribution may be heavy tailed,
the classical sample covariance may exhibit large variance and be a poor estimator of the population covariance \citep{notefficient}. Moreover, the elliptical distribution might be so heavy tailed as to not even have finite second moments, in which case its population covariance does not exist.
Yet due to the structure of the elliptical distribution, even with heavy tails it is
nonetheless possible to accurately estimate its shape matrix. This is useful in various
applications, since the shape matrix preserves the directional properties of the
distribution, such as its principal components.

Following Huber's pioneering work \citep{Huber_book}, over the past decades several
robust estimators of the covariance and
shape matrix were proposed and theoretically studied, see  \citet{maronna1976,maronna2017robust,kent,dumbgen2015,dumbgen2016} and references therein. For elliptical distributions, \citet{Tyler} proposed a robust M-estimator
for the scatter matrix \(\Sp\) and an iterative
scheme to compute it.
Tyler's M-estimator has found widespread use in various applications involving heavy tailed data. However, as it is defined only for \(p<n\), in recent years
several regularized variants, applicable also for \(p>n\) were proposed and
analyzed~\citep{abram,wiesel,chen,sun,pascal,ollila2014}. The spectral 
properties of Maronna's M-estimators and specifically Tyler's M-estimator and its regularized variants, in high dimensions as $n,p\to\infty$ with $p/n\to\gamma$ were studied by \citet{dumbgen_tylers,couillet2014robust,couillet2015random,MP,MP1,MP2}, among others.
For a recent survey on Tyler's M-estimator and its variants, see \citet{wieselteng2014}.

In this paper we study the combination of heavy tailed data with a ``large
\(p\) -- large \(n\)" setting. As formulated in Section~\ref{Sec:problem}, we consider robust
estimation of the shape matrix of an  elliptical distribution, assuming
it is approximately sparse. We address the following two challenges:\ (i) design a
computationally efficient and statistically accurate estimator of the shape
matrix \(\bm S_p\), that is adaptive to its unknown sparsity parameters; (ii)
provide theoretical guarantees on its accuracy, in the large \(p\) large $n$ regime.

We make the following contributions. First, in Section
\ref{Sec:main} we propose simple and computationally efficient estimators for the
sparse shape matrix of an elliptical distribution. These are based on
thresholding either Tyler's M-estimator (TME) or its regularized variant. Second, we
provide theoretical guarantees on their accuracy in the limit \(n,p\to\infty\) with
$p/n\to\gamma$.
Theorems \ref{plessn} and \ref{regthm} show that
the estimator $\hat{\bm E}$ based on  thresholding either TME for \(\gamma<1\) or its regularized variant for any $\gamma\in(0,\infty)$, converges in spectral norm to a sparse shape matrix \(\Sp\) at rate $\|\hat{\bm
E}-\Sp\|=O_P((\log p/n)^{(1-q)/2})$, where $q$ is the sparsity parameter of $\Sp$. Estimating a sparse shape matrix under a heavy tailed elliptical distribution is thus possible with the same asymptotic error rate as estimating a sparse covariance matrix under sub-Gaussian distributions. Moreover, our estimators are rate optimal, as this rate coincides with the
minimax rate for sparse covariance estimation with sub-Gaussian data \citep{minimax}\footnote{Technically the minimax rate was proven under the assumption that
$p/n^\beta\to c$ with $\beta>1$, see Remark 5 in \cite{minimax}. However, from personal communication with Profs. Cai and Zhou, the same minimax rate should hold also when $\beta=1$.}.

Our proofs follow the approach of \cite{CT}, with required modifications given that we analyze Tyler's M-estimators.
Theorem~\ref{plessn}, which provides guarantees for TME and is thus valid for \(p<n\), is
proven in Section~\ref{unreg}. The proof is relatively simple and heavily relies on  \citet{MP}, who
studied the spectral properties of Tyler's M-estimator when $n,p\to\infty$. Theorem~\ref{regthm}  provides guarantees on the
thresholded regularized TME, and is thus applicable also for \(p>n\). As detailed in Section \ref{reg},  its proof is far more involved, and combines a careful analysis of the form of the regularized TME together with several results in random matrix theory.
Section~\ref{simulations}  presents simulation results that support our
theoretical analysis.
With an eye towards practitioners, given that
regularization is common also when \(p<n\), we focus  on the regularized TME.
With Gaussian data, our thresholded TME
estimator is as accurate as thresholding the sample covariance. In contrast, in the presence of heavy tails it is far more
accurate. We also illustrate its potential utility in handling outliers. In addition, our estimator is quite fast to compute in practice, requiring only
few seconds on a standard PC, say for \(p=n=1000\).

Our work is related to several recent papers, that also considered sparse shape
or covariance matrix estimation with heavy tailed data.  \citet{robscatter}
considered a pair-elliptical distribution, which is a different generalization
of the classical elliptical distribution from the one we consider. They assumed
moderate tails so the population covariance matrix exists, and proposed an
estimator for it. They provided finite sample approximation bounds for their
estimator, which depend on various properties of the distribution.  For well-behaved
elliptical distributions with an exactly sparse covariance matrix, their estimator
is minimax rate optimal under the Frobenius norm.
\citet{tylercov} considered estimating a covariance matrix from a convex subset
of all positive semidefinite matrices. They added
a convex regularization term to the TME and solved the resulting
optimization problem by a semidefinite program (SDP). They
proved the existence of their estimator and its asymptotic consistency
for fixed dimension $p$ and $n\to\infty$. However, their SDP-based method is  computationally demanding
even for moderate values of $n$ and $p$.
\citet{sun2016robust} considered a wider non-convex class of matrices, and derived an SDP-based algorithm with lower time complexity.

\citet{matrixdepth} considered an elliptical distribution,  corrupted by an
epsilon-contamination model. They proposed several
estimators for the shape matrix of the elliptical distribution, based
on a generalization of Tukey's depth function.  Under a notion of
sparsity different from the one considered here,
they proved their estimator is minimax rate optimal when
$n,p\to\infty$ and $(\log p) / n\to0$.  However, from a
practical perspective this depth function estimator has a significant limitation
-- it is intractable to compute.
\cite{balakrishnan2017computationally} considered an epsilon-contamination model for a Gaussian distribution with
sparse covariance matrix ${\bm \Sigma}$, such that $\|{\bm \Sigma - \bm I}\|_0 \leq s$ for a fixed $s \geq 0$.
They proposed a polynomial-time algorithm for robust covariance estimation under this model and established an upper bound on its error under  Frobenius norm, assuming  $n,p\to\infty$ and $(\log p) / n \to c \geq 0$.
Our work in contrast provides a computationally efficient and rate optimal estimator for an approximately sparse shape matrix of a potentially heavy tailed elliptical distribution in the high dimensional setting \(p,n\to\infty \) with $p/n\to \gamma$. 
Finally, \cite{avella2018robust} developed rate optimal 
robust sparse covariance estimators for heavy tailed distributions 
via a different approach than the one presented here, based on various robust pilot estimators. Further discussion and directions for future research appear in Section \ref{Sec:disc}.



\section{Problem Setting}\label{Sec:problem}

With precise definitions  below, given $n$ i.i.d.~observations from an elliptical
distribution, the problem we study is how to estimate  its $p\times p$
\emph{shape matrix} $\bm S_p$.
Of particular interest to us is the high-dimensional regime, where both \(p,n\)
are large and comparable. Following previous works, to be able to accurately estimate the shape matrix in this regime we assume that it is approximately sparse. For completeness, we first introduce some notation, briefly review the elliptical distribution and the class of approximately sparse shape matrices we consider.


\paragraph{Notation} We denote vectors by bold lowercase letters as in \(\bm v\), and matrices by bold uppercase letters as in $\bm A$. For a vector \(\bm v\in\mathbb{R}^n\),  $\|\bm v\|$ is its Euclidean norm, $\|\bm v\|_\infty=\max_i |v_i|$, and  $B_{R}(\bm u)=\{\bm v\in\mathbb{R}^n\, |\, \|\bm v-\bm u\|_{\infty}\leq R\}$. The unit sphere in $\mathbb{R}^p$ is denoted $\mathbb{S}^{p-1}$. The identity matrix is $\bI$ and $\bm 0$ and $\bm 1$ are the vectors of zeros and ones respectively, with dimensions clear from the context. For a matrix $\bm A=(a_{ij})$, $\|\bm A\|$ denotes its spectral norm, $\|\bm A\|_F$ its Frobenius norm, $\|\bm{A}\|_{\max} = \max_{i,j} |a_{ij} |$ and $\|\bm A\|_\infty=\max_{i}\sum_j |a_{ij}|$.
We denote the set of $p \times p$ symmetric positive semidefinite and definite matrices by $S_{+}^p$ and $S_{++}^p$ respectively.
We say that an event occurs with high probability (abbreviated w.h.p.), if its probability is at least $1- C\exp(-cp)$ for constants $c,C>0$ independent of $p$.

\paragraph{Elliptical Distribution and its Shape Matrix}
{A random vector $\bm{x}\in\R^p$ follows an elliptical 
distribution with location vector $\bm \mu$ if
it has the form
\bea
        \bm{x}=\bm \mu + u \bm S_p^{\frac 12} \bm{\xi} = \bm \mu + u \bm{z},
                \label{data}
\eea
where $\bm\xi$ is drawn uniformly from $\mathbb S^{p-1}$,  $\bm S_p \in S_{++}^p$, and $u$ is an arbitrary random or deterministic nonzero scalar,  independent of $\bm{\xi}$.

In Eq.~(\ref{data}),  $\bm S_p$ is not unique, as it can be arbitrarily scaled with $u$ absorbing the
inverse scaling factor. Without loss of generality, we thus fix
\bea
\tr(\bm S_p)=p,
    \nonumber \label{tr1scale}
\eea
and refer to $\bm S_p$ as the \textit{shape matrix}. This normalization is natural in the sense that the mean variance of the $p$ coordinates of $\bm z$ is one. If the
distribution is elliptical and the population covariance $\bm \Sigma$ exists, then  \(\bm \Sigma= c\bm S_p\) for some constant \(c>0\), see for example \citet{tylercov}.

An important property of the elliptical distribution is that if $\x_1,\x_2$ are independent random vectors from (\ref{data}), then $\x_1-\x_2$ has an elliptical distribution with the same shape matrix $\Sp$ but with a zero location vector $\bm \mu = \bm 0$.
When the goal is to estimate the shape matrix $\Sp$, this allows to remove the typically unknown location vector by a symmetrization principle \citep{dumbgen_tylers}. Specifically, all $\x_i-\x_j$
are elliptically distributed with location vector $\bm\mu=\bm 0$, and one may estimate the shape matrix using all of these pairwise differences   \citep{dumbgen_tylers,sirkia2007symmetrised}. 
As discussed by \citet{nordhausen2015cautionary}, such a procedure is beneficial also for non-elliptical distributions. The resulting $O(n^2)$ pairs are, however, dependent which may complicate the analysis of the resulting estimator. 
For simplicity, we shall thus assume to have
initially observed $2n$ i.i.d. samples $\tilde\x_1,\ldots,\tilde\x_{2n}$ from model (\ref{data})
and in what follows consider the $n$ differences $\x_i = \tilde \x_{2i}-\tilde \x_{2i-1}$ for $i=1,\ldots,n$ which form an i.i.d. sample from the elliptical distribution (\ref{data}) with location vector $\bm \mu=\bm 0$.

\paragraph{Approximate Sparsity of the Shape Matrix}
Following \citet{CT},
we consider the following class of row/column
approximately sparse  shape matrices with fixed parameters $0\leq q\le 1, M>0$ and $s_p>0$:
\begin{align}
&\mathcal{U}(q,s_p,M)  \nonumber 
= 
\Big\{ \bm A  \in S_{++}^p :    a_{ii}\le M, \ \sum_{j=1}^p |a_{ij} |^q \le s_p, \  1\leq i \leq p
\Big\}.
    \nonumber
\end{align}

\paragraph{Problem Statement}
Let $\{\bm x_i\}_{i=1}^n$
be \(n\) i.i.d.~samples from the model (\ref{data}) with location vector $\bm \mu = \bm 0 $ and a sparse
shape matrix  $\bm{S}_p \in\mathcal{U}(q,s_p,M).$ We consider the following two problems:\ (i) Without explicit knowledge of  $q,s_p$ and $M$, design a computationally efficient and statistically accurate estimator of the shape matrix \(\bm S_p\); (ii) Provide theoretical guarantees on its accuracy, in the asymptotic limit as \(p,n\to\infty\) with $p/n\to\gamma\in(0,\infty)$.

\section{Sparse Shape Matrix Estimation}\label{Sec:main}

If the elliptical distribution is sub-Gaussian, then thresholding the sample covariance matrix, proposed by  \cite{CT} and \cite{karoui}, yields an accurate estimate of \(\Sp \) up to a multiplicative scaling. As illustrated in Section \ref{simulations}, however, in the presence of heavy tails, the individual entries of the sample covariance matrix may be quite far from their population counterparts, and thresholding them may give a poor estimate of the shape matrix.

To handle heavy tails, we propose the following approach: compute Tyler's M-estimator (TME) or its regularized variant, and threshold it. In Section \ref{Sec:tyler} we review TME and its regularized variant. We prove that computing the latter is computationally efficient. Section \ref{Sec:Threshold_TME}
 presents our proposed estimators. A theoretical analysis of their accuracy appears in Section \ref{sec:Accuracy_Thresholded_TME}.

\subsection{TME and its Regularized Variant}  \label{Sec:tyler}

TME, proposed by \citet{Tyler} for elliptical distributions with a known location vector, which w.l.o.g. is assumed to be $\bm 0$,
is a \(p\times p\) matrix $\hat{\bm \Sigma}$ which satisfies
\bea
\frac{p}{n}\sum_{i=1}^n \frac{\x_i\x_i^T}{\x_i^T
\hat{\bm{\Sigma}}^{-1}\x_i} = \hat{\bm{\Sigma}}.
    \label{Tyler}
\eea
Here, samples $\x_i$ lying at the origin are ignored as they provide no information on the scatter matrix, and $n$ is the number of samples not at the origin. 
As solutions to (\ref{Tyler}) can be multiplied by an arbitrary constant, \citet{Tyler} considered the normalization $\tr(\hat{\bm\Sigma})=p$, and suggested to solve  Eq.~(\ref{Tyler}) by the following iterations, starting from an arbitrary $\hat{\bm\Sigma}_1 \in S_{++}^p$,
\bea
\hat{\bm{\Sigma}}_{k+1} =p \sum_{i=1}^n\frac{\bm{x}_i\bm{x}_i^T}{\bm{x}_i^T\hat{\bm{\Sigma}}_{k}^{-1}\bm{x}_i}\bigg/
\tr\Big(\sum_{i=1}^n\frac{\bm{x}_i\bm{x}_i^T}{\bm{x}_i^T\hat{\bm{\Sigma}}_{k}^{-1}\bm{x}_i}
\Big).
    \nonumber
\eea

\citet{kent1988maximum}[Theorems 1 and 2] showed that if any linear subspace in
$\R^p$ of dimension $1 \leq d \leq p-1$ contains less than $nd/p$ of the data
samples, then there exists a unique solution
to Eq.~(\ref{Tyler}), and the above iterations converge to it. With $n$ i.i.d.~observations from an elliptical distribution, no samples lie at the origin and this
condition holds with probability $1$.

TME enjoys several important properties: First, it may equivalently be defined as the
minimizer of the following cost function, over all positive definite matrices with the constraint $\tr(\bm R)=p$, 
\be
L(\bm R)=\frac{p}n\sum_{i=1}^n \log\left(\x_i^T\bm R^{-1}\x_i\right) + \log(\det(\bm R)).
        \label{eq:Loss_TME} 
\ee
 As the minimizer of Eq. (\ref{eq:Loss_TME}), $\hat{\bm\Sigma}$ is thus the maximum likelihood estimator of the shape matrix of both the angular central Gaussian distribution \citep{tyler1987statistical} and of the generalized elliptical
distribution~\citep{frahm2}. Moreover, it is the
``most robust" estimator of the shape matrix with fixed $p$ and $n\to\infty$ for
data i.i.d.~from a continuous elliptical distribution \citep[Remark 3.1]{Tyler}. TME outperforms the sample
covariance in a variety of applications, including finance~\citep{finance},
anomaly detection in wireless sensor networks~\citep{chen}, antenna array processing~\citep{antenna} and radar detection~\citep{radar}.

As the TME does not exist when $p>n$, several regularized variants have been
proposed and analyzed \citep{abram,chen,wiesel,pascal,sun}. Even when $p \leq n$, it is common
to add small regularization to the TME.
Following \cite{sun}, here we use a regularization parameter $\alpha>0$ and consider the following regularized TME $\hat{\bm\Sigma}(\alpha)$,   defined as the solution of
\bea
\hat{\bm \Sigma}(\alpha)=\frac{1}{1+\alpha}\frac{p}{n}\sum_\on \frac{
\x_i\x_i^T}{\x_i^T \hat{\bm \Sigma}(\alpha)^{-1}
\x_i}+\frac{\alpha}{1+\alpha}\bm I.
    \label{regTME}
\eea
If $\alpha=0$, Eq.~\eqref{regTME} reverts to Eq.~\eqref{Tyler}. While regularization towards general target matrices is possible \citep{wiesel}, here for simplicity
we consider only regularization towards the identity. In contrast to the original TME formulated in Eq. (\ref{Tyler}), for which the solution
can be multiplied by an arbitrary positive scalar, as proven by \citet[Proposition III.1]{pascal}, any solution to Eq. (\ref{regTME}) satisfies
$\tr(\hat{\bm \Sigma}(\alpha)^{-1}) = p$, regardless of the  value of $\alpha$. 

\citet[Theorem 11 and Proposition 13]{sun}, derived a sufficient and necessary condition for existence of a unique positive definite matrix which solves  Eq. (\ref{regTME}). Again, ignoring samples at the origin, the condition is that any linear subspace in
    $\R^p$ of dimension $1 \leq d \leq p-1$ contains less than $(1+\alpha)nd/p$
    of the data samples. Since $\alpha > 0$, this condition is weaker than for the original TME. In particular, with data 
    i.i.d. from a continuous distribution,
    Eq.~\eqref{regTME} has a unique solution for  $\alpha>\max(0,p/n-1)$, see also \citet[Theorem III.1]{pascal}.
With $n$ i.i.d.~samples from an elliptical
distribution, these  conditions hold with probability 1.  

\citet[Proposition 18]{sun}
further showed that starting from any positive definite initial
guess, the following iterations
\bea
\hat{\bm{\Sigma}}_{k+1}(\alpha)=\frac{1}{1+\alpha}\frac{p}{n}\sum_\on
\frac{\x_i\x_i^T}{\x_i^T {{\hat{\bm{\Sigma}}_k}(\alpha
)}^{\!-1}  \x_i}+\frac{\alpha}{1+\alpha}\bm{I}
                   \label{eq:iterations_reg_TME}
\eea
converge to the unique solution.
Various properties of TME and its regularized variant, in the  limit as \(p,n\to\infty\) with $p/n\to\gamma$, were proven by \citet{dumbgen_tylers,MP,MP1,MP2}.

The following lemma, proven in the appendix, shows that 
if $\alpha$ is sufficiently large and $\bm \Sigma(\alpha)$  exists, then the iterations (\ref{eq:iterations_reg_TME}), starting  from \(\bS_1(\alpha)={\alpha}\bI/{(1+\alpha)} \), have a uniform linear convergence rate already from the  first iteration.
To the best of our knowledge, this result is new and is of independent interest.

To state the lemma, let $e_k=\| \bSa-\bS_k(\alpha) \|$ be the error after $k$ iterations, $\bm \tilde{\bm X}$ be the $p \times n$ matrix whose columns are $\{\x_i/\|\x_i\|\}_{i=1}^{n}$ and let
$$C(\bm \tilde{\bm X})= \frac{p}{n} \| \bm \tilde{\bm X} \tilde{\bm X}^T\|  
=  \frac{p}{n} \left\| \sum_{i=1}^n \frac{\x_i \x_i^T}{\| \x_i \|^2} \right\|.$$
Note that for a given dataset, $C(\bm \tilde{\bm X})$ is fixed and can be computed a-priori.

\begin{lem} \label{lem:convergence_reg_TME}
Let $\{\bm{x}_i\}_{i=1}^n$ be a data set in $\mathbb{R}^p$ with constant $C(\bm \tilde{\bm X})$ and let $0<R<1$. Suppose that $\alpha>  \max((3+R^{-1}) C(\bm \tilde{\bm X})-1,0)$
and let $\hat{\bm\Sigma}(\alpha)$ be a solution of \eqref{regTME}. Then,
the iterations of Eq.~(\ref{eq:iterations_reg_TME}), starting from $\bS_1(\alpha)=\frac{\alpha}{1+\alpha}\bI$, {{uniformly and linearly converge}} to $\hat{\bm\Sigma}(\alpha)$ with the ratio $R$. That is,
\begin{equation}
e_{k+1}\leq R e_{k} \leq R^k e_1, \ \text{for all } k \geq 1.
        \label{eq:linear_convergence_TME}
\end{equation}
\end{lem}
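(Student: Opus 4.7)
The plan is to establish a one-step contraction $e_{k+1}\le R\,e_k$ that holds uniformly for every $k\ge 1$, by showing that both the iterates $\hat{\bm\Sigma}_k(\alpha)$ and the fixed point $\hat{\bm\Sigma}(\alpha)$ lie in a common bounded spectral range determined by $\alpha$ and $C(\tilde{\bm X})$. Let $F(\bm R)$ denote the right-hand side of \eqref{eq:iterations_reg_TME}, so that $\hat{\bm\Sigma}_{k+1}(\alpha)=F(\hat{\bm\Sigma}_k(\alpha))$ and $\hat{\bm\Sigma}(\alpha)=F(\hat{\bm\Sigma}(\alpha))$. The first order of business is to subtract these two identities and, after factoring $\|\x_i\|^2$ out of each rank-one term $\x_i\x_i^T$ and using $\hat\x_i=\x_i/\|\x_i\|$, write
\begin{equation*}
\hat{\bm\Sigma}_{k+1}(\alpha)-\hat{\bm\Sigma}(\alpha)
 = \frac{1}{1+\alpha}\frac{p}{n}\sum_{i=1}^n d_i\,\hat\x_i\hat\x_i^T,
 \qquad
 d_i=\frac{\hat\x_i^{T}\bigl(\hat{\bm\Sigma}(\alpha)^{-1}-\hat{\bm\Sigma}_k(\alpha)^{-1}\bigr)\hat\x_i}{\bigl(\hat\x_i^{T}\hat{\bm\Sigma}_k(\alpha)^{-1}\hat\x_i\bigr)\bigl(\hat\x_i^{T}\hat{\bm\Sigma}(\alpha)^{-1}\hat\x_i\bigr)}.
\end{equation*}
The elementary bound $\|\sum_i c_i\hat\x_i\hat\x_i^T\|\le \max_i|c_i|\cdot\|\sum_i\hat\x_i\hat\x_i^T\|$ then gives $e_{k+1}\le \frac{C(\tilde{\bm X})}{1+\alpha}\max_i|d_i|$.

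Next I would control $|d_i|$ via uniform spectral bounds. Since $F(\bm R)\succeq \frac{\alpha}{1+\alpha}\bm I$ for every PSD $\bm R$, every iterate (and the fixed point) satisfies $\hat{\bm\Sigma}_k(\alpha)\succeq\frac{\alpha}{1+\alpha}\bm I$, hence $\|\hat{\bm\Sigma}_k(\alpha)^{-1}\|\le\frac{1+\alpha}{\alpha}$. For the upper bound on $\|\hat{\bm\Sigma}_k(\alpha)\|$, I would use the inequality $\x_i^{T}\bm R^{-1}\x_i\ge \|\x_i\|^2/\|\bm R\|$ to derive $\|F(\bm R)\|\le \frac{C(\tilde{\bm X})}{1+\alpha}\|\bm R\|+\frac{\alpha}{1+\alpha}$, and then conclude inductively (starting from $\hat{\bm\Sigma}_1(\alpha)=\frac{\alpha}{1+\alpha}\bm I$, whose norm is already $\le\frac{\alpha}{1+\alpha-C(\tilde{\bm X})}$) that $\|\hat{\bm\Sigma}_k(\alpha)\|\le \frac{\alpha}{1+\alpha-C(\tilde{\bm X})}$ for every $k\ge 1$; the same bound applies to $\hat{\bm\Sigma}(\alpha)$. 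In particular, $\hat\x_i^{T}\hat{\bm\Sigma}_k(\alpha)^{-1}\hat\x_i\ge \frac{1+\alpha-C(\tilde{\bm X})}{\alpha}$, and likewise for $\hat{\bm\Sigma}(\alpha)$.

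Combining these estimates with the resolvent identity $\hat{\bm\Sigma}(\alpha)^{-1}-\hat{\bm\Sigma}_k(\alpha)^{-1}=\hat{\bm\Sigma}_k(\alpha)^{-1}(\hat{\bm\Sigma}_k(\alpha)-\hat{\bm\Sigma}(\alpha))\hat{\bm\Sigma}(\alpha)^{-1}$ yields
\begin{equation*}
|d_i|\le \frac{(1+\alpha)^2/\alpha^2}{\bigl((1+\alpha-C(\tilde{\bm X}))/\alpha\bigr)^{2}}\,e_k
     = \frac{(1+\alpha)^2}{(1+\alpha-C(\tilde{\bm X}))^{2}}\,e_k,
\end{equation*}
and substituting back gives the concrete one-step ratio
\begin{equation*}
e_{k+1}\le \frac{C(\tilde{\bm X})\,(1+\alpha)}{(1+\alpha-C(\tilde{\bm X}))^{2}}\,e_k.
\end{equation*}
The last step is purely algebraic: verify that the hypothesis $\alpha>(3+R^{-1})C(\tilde{\bm X})-1$ makes this prefactor at most $R$. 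Writing $\beta=1+\alpha$ and $C=C(\tilde{\bm X})$, the required inequality is the quadratic $\beta^{2}-(2+R^{-1})C\beta+C^{2}\ge 0$, and direct substitution of $\beta=(3+R^{-1})C$ yields $(4+R^{-1})C^{2}>0$, showing this value lies above the larger root and hence suffices.

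The main obstacle is really the uniformity across all $k\ge 1$: the crude bound on $\|\hat{\bm\Sigma}_k(\alpha)\|$ must hold from the very first iteration, which is why the particular starting point $\hat{\bm\Sigma}_1(\alpha)=\frac{\alpha}{1+\alpha}\bm I$ is used—its norm is already at (in fact below) the invariant upper bound, so no burn-in is required. Once that uniform two-sided spectral control is in place, the rest of the argument is a mechanical application of the resolvent identity and the inequality $\|\sum c_i\hat\x_i\hat\x_i^{T}\|\le \max_i|c_i|\cdot C(\tilde{\bm X})(1+\alpha)/(p/n)(1+\alpha)$, together with the algebraic tuning of $\alpha$ against $R$.
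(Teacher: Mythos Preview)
Your proof is correct and in fact cleaner than the paper's. Both arguments start from the same two observations: the trivial lower bound $\hat{\bm\Sigma}_k(\alpha)\succeq\frac{\alpha}{1+\alpha}\bm I$ and the norm recursion $\|F(\bm R)\|\le \frac{C(\tilde{\bm X})}{1+\alpha}\|\bm R\|+\frac{\alpha}{1+\alpha}$, which pins $\|\hat{\bm\Sigma}(\alpha)\|\le\frac{\alpha}{1+\alpha-C(\tilde{\bm X})}$. The paper, however, does \emph{not} propagate this upper bound to the iterates; instead it writes $\hat{\bm\Sigma}_k(\alpha)=\hat{\bm\Sigma}(\alpha)-\bm E_k$, expands $\hat{\bm\Sigma}_k(\alpha)^{-1}$ as a perturbation of $\hat{\bm\Sigma}(\alpha)^{-1}$, and obtains the ratio $e_{k+1}/e_k\le \frac{C(\tilde{\bm X})}{1+\alpha-C(\tilde{\bm X})}\cdot\bigl(1-2\tfrac{1+\alpha}{\alpha}e_k\bigr)^{-1}$, which still depends on $e_k$. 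This forces an inductive bootstrap: one first checks $\frac{1+\alpha}{\alpha}e_1<\frac{1}{2+R^{-1}}$ (this is where the threshold $(3+R^{-1})C(\tilde{\bm X})$ enters in the paper) and then argues that the ratio stays below $R$ for all $k$. Your route avoids this entirely by observing that the \emph{same} upper bound $\frac{\alpha}{1+\alpha-C(\tilde{\bm X})}$ holds for every iterate, by a one-line induction on $\|\hat{\bm\Sigma}_k(\alpha)\|$ that uses only the specific starting point. With uniform two-sided spectral control in hand, the resolvent identity gives a uniform contraction factor $\frac{C(\tilde{\bm X})(1+\alpha)}{(1+\alpha-C(\tilde{\bm X}))^2}$, and the quadratic check against $(3+R^{-1})C(\tilde{\bm X})$ is exactly as you wrote. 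The payoff of your approach is a shorter, more transparent argument with no $e_k$-dependent factors; the paper's perturbative expansion is more elaborate but arrives at the same threshold on $\alpha$.
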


A straightforward calculation yields the bound $C(\bm \tilde{\bm X}) \geq p/n$. Hence, the above assumptions on $\alpha$ imply that $\alpha>\max(0,p/n-1)$ and consequently guarantee the
existence and uniqueness of $\bSa$ in our setting.

Lemma \ref{lem:convergence_reg_TME} implies that calculating $\hat{\bm{\Sigma}}(\alpha)$ is computationally efficient, since for accuracy $\epsilon$ and convergence ratio $R$, at most $\lceil \log_{R^{-1}}(\epsilon^{-1}) \rceil$
iterations are needed. If \(n>p\) then at each iteration, the matrix inversion costs $O(p^3)$ operations and the other operations are $O(n \, p^2)$. For \(n<p\) one may first perform an SVD of the data and compute the subspace $W=\text{Span}(\x_i)$ whose dimension 
is at most $n$. Since for any $\bm v\bot W$, by definition $\hat{\bm\Sigma}(\alpha)\bm v = \alpha/(1+\alpha)\bm v$, it suffices to calculate $\hat{\bm \Sigma}(\alpha)$ restricted to the subspace $W$. Each iteration then costs at most $O(n^3)$. 
 Therefore, for sufficiently large $\alpha$, the total cost of computing $\hat{\bm{\Sigma}}(\alpha)$ within accuracy $\epsilon$ is $O(\log(\epsilon^{-1}) (n+p) \min(n,p)^2)$.

Our theoretical analysis below studies the regularized TME as \(p,n\to\infty\) and $p/n\to\gamma\in(0,\infty)$, but with a fixed value of \(\alpha\). The next lemma shows that for data sampled from an elliptical distribution, with high probability   $C(\bm \tilde{\bm X})$ is bounded by a constant that depends on $\|\Sp\|$ and on the ratio \(p/n\).

\begin{lem}\label{lem:CX_bound}
    Let $\bm x_1,\dots,\bm x_n$ be i.i.d. from Eq. (\ref{data}) with $\bm\mu = \bm 0$ and shape matrix $\Sp$. Then, with probability $>1-\exp(-cp)$, where  $c=c(\|\Sp\|)>0$, 
\begin{equation}
C(\bm \tilde{\bm X})\le
    2\|\Sp\|\Big(1+2\sqrt{p/n}\Big)^2.
        \label{eq:bound_CX}
  \end{equation}
\end{lem}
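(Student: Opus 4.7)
The plan is to reduce the claimed bound to two standard concentration facts about Gaussian random matrices. The first step is to invoke the classical representation of the uniform law on $\mathbb{S}^{p-1}$: writing $\bm\xi_i = \bm g_i/\|\bm g_i\|$ with $\bm g_i \sim N(\bm 0, \bI_p)$ i.i.d., and noting that the scalar $u_i$ in the elliptical representation $\x_i = u_i \Sp^{1/2}\bm\xi_i$ cancels in $\x_i/\|\x_i\|$, each column of $\tilde{\bm X}$ equals $\bm z_i/\|\bm z_i\|$ for $\bm z_i := \Sp^{1/2}\bm g_i$. Letting $\bm Z = [\bm z_1,\ldots,\bm z_n]$ and $\bm D$ denote the diagonal matrix with entries $\|\bm z_i\|$, we have $\tilde{\bm X} = \bm Z\bm D^{-1}$ and hence
\begin{equation*}
C(\tilde{\bm X}) = \frac{p}{n}\|\tilde{\bm X}\tilde{\bm X}^T\| = \frac{p}{n}\|\tilde{\bm X}\|^2 \leq \frac{p}{n}\cdot\frac{\|\bm Z\|^2}{\min_i \|\bm z_i\|^2}.
\end{equation*}

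It then suffices to bound the numerator and the denominator on the right separately. For the denominator, $\|\bm z_i\|^2 = \bm g_i^T \Sp \bm g_i$ has mean $\tr(\Sp)=p$, and the Hanson--Wright inequality combined with the bounds $\|\Sp\|_F^2 \le p\|\Sp\|$ and $\|\Sp\|_{\mathrm{op}}=\|\Sp\|$ gives $P(\|\bm z_i\|^2 < p/2) \le 2\exp(-cp/\|\Sp\|)$. A union bound over $i\le n$, valid because $n = O(p)$ in the regime $p/n\to\gamma>0$, yields $\min_i \|\bm z_i\|^2 \ge p/2$ with probability at least $1-\exp(-c_1 p)$. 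For the numerator, the Davidson--Szarek inequality applied to $\bm Z = \Sp^{1/2}\bm G$ with $\bm G=[\bm g_1,\ldots,\bm g_n]$ gives $\|\bm Z\| \le \sqrt{\|\Sp\|}(\sqrt{n}+\sqrt{p}+t)$ with probability at least $1-2\exp(-t^2/2)$; taking $t=\sqrt{p}$ produces $\|\bm Z\| \le \sqrt{\|\Sp\|}(\sqrt{n}+2\sqrt{p})$ with probability at least $1-2\exp(-p/2)$.

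Combining the two bounds on the intersection of these high-probability events yields
\begin{equation*}
C(\tilde{\bm X}) \le \frac{p}{n}\cdot\frac{\|\Sp\|(\sqrt{n}+2\sqrt{p})^2}{p/2} = 2\|\Sp\|\bigl(1+2\sqrt{p/n}\bigr)^2,
\end{equation*}
which is the claimed inequality, with total failure probability bounded by $\exp(-cp)$ for some $c=c(\|\Sp\|)>0$. The main technical point is the uniform-in-$i$ lower bound on $\|\bm z_i\|^2$, which is where the dependence of $c$ on $\|\Sp\|$ enters; note that the choice of constant $1/2$ in this lower bound is made precisely so that the prefactor in the final inequality comes out to the claimed $2\|\Sp\|$. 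The Davidson--Szarek step and the matrix decomposition $\tilde{\bm X} = \bm Z\bm D^{-1}$ are both routine.
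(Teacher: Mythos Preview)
Your proof is correct and follows essentially the same approach as the paper: both reduce to the Gaussian case by scale invariance, then use Hanson--Wright to lower bound each $\|\bm z_i\|^2=\bm g_i^T\Sp\bm g_i$ by $p/2$ uniformly in $i$, and Davidson--Szarek to bound the spectral norm of the Gaussian matrix. The only cosmetic difference is that the paper diagonalizes $\Sp=\bm U\bm D\bm U^T$ and writes $C(\tilde{\bm X})=\|\Sp^{1/2}(\tfrac1n\sum_i\bm\xi_i\bm\xi_i^T/(\tfrac1p\bm\xi_i^T\bm D\bm\xi_i))\Sp^{1/2}\|$ before pulling out the factor $2\|\Sp\|$, whereas you use the slightly cleaner factorization $\tilde{\bm X}=\bm Z\bm D^{-1}$ directly; the two routes coincide line by line.
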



\subsection{TME-Based Thresholding Estimators}\label{Sec:Threshold_TME}

%
One possible approach to construct a sparse and robust estimator for the shape matrix is to add a suitable penalty to the original cost functional Eq. (\ref{eq:Loss_TME}) of the TME. For various structural assumptions on the shape matrix, this approach was proposed by \citet{tylercov}
and by \cite{sun2016robust}. 

For a sparsity inducing penalty, however,
such an approach would in general lead to a non-convex and potentially difficult to optimize objective. 
As such, we instead opt for thresholding the (regularized) TME, which as we show in our paper, for sufficiently large regularization $\alpha$, can be computed efficiently in practical polynomial time. 

For a matrix $\bm A=(a_{ij})$ and threshold $t>0$,  define the hard-thresholding
operator by
\bea
\tau_t(\bm A)
= ({\bm1}(|a_{ij}|>t)a_{ij}).
    \nonumber
\eea
For $n>p$, where the TME $\hat{\bm \Sigma} $ exists and by definition has unit
trace, our proposed estimator for the shape matrix \(\Sp\) takes the form
\be
\hat{\bm S}_p=\tau_t\left({\hat{\bm \Sigma}}\right),
        \label{eq:T_TME}\
\ee
where the threshold $t=t(p,n)$ is specified below.  Similarly, for general
$p,n$, 
our estimator based on the regularized TME is
\be
\hat{\bm S}_p = \tau_t\left(p\frac{
\hat{\bm \Sigma}(\alpha)-\tfrac{\alpha}{1+\alpha}\bm I}
{\tr(\hat{\bm \Sigma}(\alpha)-\tfrac{\alpha}{1+\alpha}\bm I)}\right).
        \label{eq:T_R_TME}
\ee
Note that both $\hat{\bm{ \Sigma}}$ in Eq. (\ref{eq:T_TME}) and the argument matrix prior to thresholding in Eq. (\ref{eq:T_R_TME}) have rank at most $\min(n,p)$. 

%
\subsection{Accuracy of the Thresholded TME} \label{sec:Accuracy_Thresholded_TME}

Theorems~\ref{plessn} and \ref{regthm}, proved in Sections \ref{unreg} and \ref{reg}, respectively, establish the asymptotic
accuracy of Eqs. (\ref{eq:T_TME}) and (\ref{eq:T_R_TME}) as estimates of
the shape matrix $\bm S_p$.


\begin{thm} \label{plessn}
Consider a sequence \((n,p,\Sp)\) where \(n\to \infty,$  $p=p_n\to\infty\) with $p/n\to\gamma\in(0,1)$, and
$\Sp \in\mathcal{U}(q,s_p,M)$. For each triplet $(n,p,\bm S_p)$,
let \(\hat{\bm \Sigma}\) be the TME of \(n\) i.i.d. samples $\{\bm x_i\}_{i=1}^n\subset \R^p$  from the elliptical distribution (\ref{data}). Then there
exists a constant $M'$ depending only on $\gamma$ such that for any fixed
$M''>M'$, the thresholded TME of Eq.~(\ref{eq:T_TME}) with threshold
$t_n=M''\sqrt{{\log p}/{n}}$,
approaches $\bm{S}_p$ in spectral norm at a rate
\begin{equation}
\Big\|\tau_{t_n}(p\bm{\hat\Sigma})-\bm{S}_p \Big\| = \mathcal{O}_P\bigg( s_p\cdot \left(\frac{\log p}{n}\right)^{(1-q)/2}\bigg).
                \nonumber
\end{equation}
\end{thm}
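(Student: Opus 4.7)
The plan is to follow the two-step thresholding program of \citet{CT}, specialized to Tyler's M-estimator. The key reduction is that $\|\tau_{t_n}(p\hat{\bm\Sigma})-\Sp\|$ is controlled entirely by the entrywise maximum deviation $\Delta_n := \|p\hat{\bm\Sigma}-\Sp\|_{\max}$ via a deterministic argument that uses only the sparsity constraint $\sum_j|(\Sp)_{ij}|^q \leq s_p$. Once a high-probability bound of the form $\Delta_n \leq M'\sqrt{\log p / n}$ is in hand, the conclusion follows by choosing the threshold $t_n = M''\sqrt{\log p/n}$ with $M''$ sufficiently larger than $M'$.

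The probabilistic input comes from \citet{MP}, who analyzed the high-dimensional behavior of TME with $n,p\to\infty$ and $p/n\to\gamma\in(0,1)$. I would invoke their results to produce a constant $M' = M'(\gamma)$ such that
\begin{equation*}
\mathbb{P}\!\left(\|p\hat{\bm\Sigma}-\Sp\|_{\max} > M'\sqrt{\tfrac{\log p}{n}}\right) \longrightarrow 0.
\end{equation*}
Morally, the implicit fixed-point equation Eq.~(\ref{Tyler}) forces $p\hat{\bm\Sigma}$ to be close to the normalized sample covariance $\tfrac{p}{n}\sum_i \x_i\x_i^T/\|\x_i\|^2$; entrywise concentration of the latter around $\Sp$ at the rate $\sqrt{\log p/n}$ is standard via sub-exponential tail bounds for quadratic forms on the sphere, followed by a union bound over the $p^2$ entries. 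This is the sole probabilistic ingredient in the argument.

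With $M'' > 2M'$, I henceforth condition on the event $E_n = \{\Delta_n \leq M'\sqrt{\log p/n}\}$, which has probability tending to one. Since $\tau_{t_n}(p\hat{\bm\Sigma}) - \Sp$ is symmetric, its spectral norm is bounded by the row-sum norm $\max_i \sum_j |\tau_{t_n}(p\hat\sigma_{ij}) - (\Sp)_{ij}|$. I would then split each row sum into contributions from ``large'' entries $|(\Sp)_{ij}| > 2t_n$ and ``small'' entries $|(\Sp)_{ij}| \leq 2t_n$. On $E_n$, the thresholding is inactive on large entries, so each contributes at most $\Delta_n \lesssim t_n$, and a Markov-type inequality applied to the sparsity constraint shows there are at most $s_p(2t_n)^{-q}$ such indices per row. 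For small entries, the thresholded value is nonzero only when $|(\Sp)_{ij}| \gtrsim t_n$, and the sum of $|(\Sp)_{ij}|$ over such indices is at most $(2t_n)^{1-q}s_p$ using $\sum_j|(\Sp)_{ij}|^q \leq s_p$. Both contributions are $O(s_p t_n^{1-q}) = O(s_p(\log p/n)^{(1-q)/2})$, which is the stated rate.

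The principal obstacle is the first step: the classical sample covariance matrix admits an almost immediate sub-Gaussian max-entry concentration, whereas Tyler's M-estimator is defined only implicitly via the fixed-point Eq.~(\ref{Tyler}) and is a highly nonlinear function of the data. In the regime $p/n\to\gamma>0$ every entry of $\hat{\bm\Sigma}$ fluctuates at a nontrivial scale, and the uniform control at rate $\sqrt{\log p/n}$ is precisely the delicate random-matrix-theoretic statement provided by \citet{MP}; once that bound is accepted, the rest of the proof is essentially a transcription of the Bickel--Levina--Cai--Liu thresholding argument to the present elliptical setting.
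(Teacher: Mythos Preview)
Your approach is correct and essentially matches the paper's: both reduce via the deterministic Bickel--Levina thresholding argument (the paper's Lemma~\ref{bl}) to the entrywise bound $\|\hat{\bm\Sigma}-\Sp\|_{\max}=O_P(\sqrt{\log p/n})$, and both obtain this bound from the weight-concentration result of \citet{MP}. The only cosmetic difference is that the paper first passes to Gaussian data via scale invariance (Lemma~\ref{lem:scaling}) and uses the ordinary sample covariance $\hat{\bm S}$ as the intermediary---splitting $\|\hat{\bm\Sigma}-\Sp\|_{\max}\le\|\hat{\bm\Sigma}-\hat{\bm S}\|_{\max}+\|\hat{\bm S}-\Sp\|_{\max}$ and handling the first term via the weight bound $\max_i|nw_i-1|=O_P(\epsilon)$ plus a trace-concentration step (Lemmas~\ref{spec} and~\ref{lem:p_over_Tw})---whereas you route through the self-normalized matrix $\tfrac{p}{n}\sum_i\x_i\x_i^T/\|\x_i\|^2$; these coincide once $\|\x_i\|^2\approx p$ under the Gaussian reduction, though note that \citet{MP} do not state the entrywise max bound directly, so that bridging step still needs to be written out as the paper does.
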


\begin{thm}\label{regthm}
Consider a sequence  \((n,p,\bm S_p)\) as in Theorem \ref{plessn},
here with \(p/n\to\gamma\in(0,\infty)\) and with the additional assumption
that $\|\Sp\| \leq s_{\max}$.
For ${\alpha}>\max(0,\gamma-1+s_{\max}(1+\sqrt{\gamma})^2)$, let \(\hat{\bm
\Sigma}(\alpha)\) be the regularized TME of \(n\) i.i.d.~samples  $\{\bm
x_i\}_{i=1}^n\subset\R^p$  from the elliptical distribution (\ref{data}).
Then there exists an $M'$ depending only on
$\gamma$ and $\alpha$ such that for any fixed $M''>M'$, the estimator of Eq.~(\ref{eq:T_R_TME}) with $t_n=M''\sqrt{{\log p}/{n}},$ converges in
spectral norm to $\bm S_p$ at rate
\bea
\left\| \tau_{t_n}\left(p\frac{\left(\hat{\bm \Sigma}(\alpha) - \frac{\alpha}{1+\alpha} \bm I
        \right)}{\tr\left(\hat{\bm \Sigma}(\alpha) - \frac{\alpha}{1+\alpha} \bm I
\right)} \right)-
\bm S_p  \right\| = \mathcal{O}_P\left(s_p \left(\frac{\log p}{n}
\right)^{(1-q)/2}\right). \nonumber
\eea
\end{thm}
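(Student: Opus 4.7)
The plan is to follow the two-step thresholding strategy of \cite{CT}: (i) prove an entrywise max-norm bound $\|\hat{\bm M}-\Sp\|_{\max}=O_P(\sqrt{\log p/n})$, where $\hat{\bm M}:=p(\bSa-\tfrac{\alpha}{1+\alpha}\bI)/\tr(\bSa-\tfrac{\alpha}{1+\alpha}\bI)$ is the matrix to which $\tau_{t_n}$ is applied; (ii) convert this into the spectral bound by a deterministic thresholding lemma. Step (ii) is routine given the sparsity class $\mathcal U(q,s_p,M)$: bound $\|\tau_{t_n}(\hat{\bm M})-\Sp\|\le\|\tau_{t_n}(\hat{\bm M})-\Sp\|_\infty$, split each absolute row sum according to whether the corresponding entry is above or below the threshold, and use $\sum_j|(\Sp)_{ij}|^q\le s_p$ to recover $O(s_p t_n^{1-q})=O_P(s_p(\log p/n)^{(1-q)/2})$. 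The substance and the main obstacle lie in step (i).

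The structural observation I would exploit is the scale invariance of the TME ratios. Writing $\x_i=u_i\Sp^{1/2}\bm\xi_i$ with $\bm\xi_i$ uniform on $\mathbb{S}^{p-1}$, the $u_i$ cancel and \eqref{regTME} becomes
$$\bSa-\tfrac{\alpha}{1+\alpha}\bI=\frac{1}{1+\alpha}\frac{p}{n}\sum_{i=1}^n\frac{\Sp^{1/2}\bm\xi_i\bm\xi_i^T\Sp^{1/2}}{\bm\xi_i^T\Sp^{1/2}\bSa^{-1}\Sp^{1/2}\bm\xi_i}.$$
Since $\bm\xi_i^T\bm A\bm\xi_i\approx\tr(\bm A)/p$ for uniform vectors on the sphere, all denominators should concentrate around $\delta:=\tr(\Sp\bSa^{-1})/p$, yielding the deterministic equivalent $\bSa-\tfrac{\alpha}{1+\alpha}\bI\approx\Sp/((1+\alpha)\delta)$. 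The trace normalization inside $\hat{\bm M}$ cancels the unknown prefactor $((1+\alpha)\delta)^{-1}$, so quantifying these two approximations is exactly what yields step (i).

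Rigorously, I would (a) invoke a Hanson--Wright style concentration inequality for quadratic forms on the sphere to control $|\bm\xi_i^T\bm A\bm\xi_i-\tr(\bm A)/p|$ with $\bm A=\Sp^{1/2}\bSa^{-1}\Sp^{1/2}$, uniformly over $i=1,\dots,n$; and (b) use a Bernstein-type bound together with a union bound over the $p^2$ entries of $\frac{p}{n}\sum_i\Sp^{1/2}\bm\xi_i\bm\xi_i^T\Sp^{1/2}$ to get $O_P(\sqrt{\log p/n})$ entrywise deviation from $\Sp$. The regularization enforces $\|\bSa^{-1}\|\le(1+\alpha)/\alpha$, while the assumption $\alpha>\max(0,\gamma-1+s_{\max}(1+\sqrt{\gamma})^2)$ combined with Lemma~\ref{lem:CX_bound}, applied to the fixed-point identity itself, also bounds $\|\bSa\|$; hence $\delta$ is pinned between two positive constants and both approximations are effective.

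The principal obstacle is that $\bSa$ depends on every $\bm\xi_i$, invalidating a direct application of the concentration inequality in (a). I would decouple via a leave-one-out construction: let $\bSa^{(i)}$ be the regularized TME computed without sample $i$; apply the quadratic-form inequality to the independent matrix $\Sp^{1/2}(\bSa^{(i)})^{-1}\Sp^{1/2}$; and then control $\|\bSa^{-1}-(\bSa^{(i)})^{-1}\|$ by a Sherman--Morrison identity together with the uniform linear-convergence bound of Lemma~\ref{lem:convergence_reg_TME}, which supplies exactly the quantitative stability of the fixed-point map needed to absorb a single rank-one update into the $\sqrt{\log p/n}$ error budget. Showing that the leave-one-out perturbation remains within that budget uniformly in $i$---and propagating this through the denominators---is the technically delicate part; once done, all denominators are $(1+o_P(1))\delta$ uniformly, substituting back into the fixed-point equation together with the entrywise bound on the numerator gives the max-norm control on $\hat{\bm M}-\Sp$, and step (ii) closes the proof.
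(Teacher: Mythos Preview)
Your reduction to a $\|\cdot\|_{\max}$ bound followed by the deterministic thresholding argument (your step (ii)) is exactly the paper's route via Lemma~\ref{bl}. The heart of the matter, as you identify, is the concentration of the denominators $d_i=\bm\xi_i^T\Sp^{1/2}\bSa^{-1}\Sp^{1/2}\bm\xi_i$ (equivalently, of the weights $w_i=1/d_i$) around a common value, uniformly in $i$, with fluctuation $O(\sqrt{\log p/n})$. The paper proves precisely this, but by a different mechanism than the one you sketch: it introduces a \emph{deterministic} target $r$ (Proposition~\ref{prop:r_finite}), studies the weight map $g$ of Eq.~\eqref{eq:gee} at $\bm u=r\bm 1$, and uses a Newton--Kantorovich argument (Lemmas~\ref{lem1}--\ref{Teng_cor}) to locate the root $n\bm w$ near $\bm u$. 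Crucially, the decoupling is done at the fixed evaluation point $\bm u$: one removes the $i$-th term from the deterministic sum $\sum_j r\x_j\x_j^T$, and Sherman--Morrison gives an \emph{exact} identity relating $\bm\xi_i^T\bm E\bm\xi_i$ to the leave-one-out quadratic form $Q_i$ (Eq.~\eqref{eq:Qeq}). No comparison of two fixed points is ever needed.

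Your proposed decoupling---leave-one-out on the estimator $\bSa$ itself, controlled by the contraction of Lemma~\ref{lem:convergence_reg_TME}---has a genuine gap. First, Lemma~\ref{lem:convergence_reg_TME} requires $\alpha>(3+R^{-1})C(\tilde{\bm X})-1$, which by Lemma~\ref{lem:CX_bound} is of order $s_{\max}(1+2\sqrt{\gamma})^2/R$; this is strictly stronger than the hypothesis $\alpha>\gamma-1+s_{\max}(1+\sqrt{\gamma})^2$ of Theorem~\ref{regthm}, so the contraction is not available in the stated regime. Second, and more seriously, even when the map is a contraction with ratio $R<1$, removing sample $i$ perturbs the fixed-point map by the rank-one term $\tfrac{1}{1+\alpha}\tfrac{p}{n}\,\x_i\x_i^T/(\x_i^T\bm\Sigma^{-1}\x_i)$, whose operator norm is $\asymp p/n\asymp\gamma$; the contraction then yields only $\|\bSa-\bSa^{(i)}\|=O(1)$. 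Propagating this through $\bm\xi_i^T(\bm A-\bm A^{(i)})\bm\xi_i$ and $\tr(\bm A-\bm A^{(i)})/p$ with $\bm A=\Sp^{1/2}\bSa^{-1}\Sp^{1/2}$ leaves an $O(1)$ residual in $|d_i-\delta|$, which does not fit the $\sqrt{\log p/n}$ budget. The paper avoids this by never comparing two fixed points: the Sherman--Morrison step is applied to a linear sum at fixed weights, where it is exact, and the passage from ``$g(\bm u)$ is small'' to ``the root is near $\bm u$'' is handled by the quantitative inverse-function Lemma~\ref{Teng_cor}.
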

Several remarks regarding Theorem \ref{regthm} are in place. 

\begin{rek}
As noted by \citet[p.~2580]{CT}, if $\Sp\in \mathcal U(q,s_{p},M)$ 
then $\|\Sp\|\leq M^{1-q} s_p$ which may grow with \(p\). Since we analyze the regularized TME with a {\em fixed} value of $\alpha$, we explicitly require that
$\|\Sp \|\leq s_{\max}$ independent of $p$. If the sequence of matrices $\Sp$ has a norm that grows to infinity with $p$, then the regularization $\alpha$ should also grow to infinity with $p$, such that $\alpha > c \|\Sp\|$ for some constant $c>0$. We believe an analogue of Theorem \ref{regthm} should hold in this case, but this requires a careful analysis beyond the scope of this paper. 
\end{rek}

\begin{rek}
The convergence rate in  Theorems~\ref{plessn} and \ref{regthm} coincides with the
minimax optimal rate for sparse covariance estimation with sub-Gaussian data,
derived by \citet{minimax}.
Since the Gaussian distribution is a particular case of an elliptical distribution, our estimators are thus minimax rate optimal. Furthermore, in light of Lemmas \ref{lem:convergence_reg_TME} and \ref{lem:CX_bound}, computing the regularized TME and subsequently thresholding it, is computationally efficient.
\end{rek}

\begin{rek}
One use of regularized variants of Tyler's M-estimator is to provide an accurate estimate of the shape matrix when $p>n$.
With this goal in mind, choosing the precise value of the regularization constant is crucial \citep{chen,MP1}.  Setting the regularization parameter
is also important to maximize the asymptotic detection probability in various signal processing applications \citep{kammoun2018optimal}. In contrast, 
in our case, as we remove the regularization $\alpha/(1+\alpha) \bm I$ prior to thresholding, at least asymptotically, 
the precise value of the regularization parameter is unimportant, provided it is sufficiently large. This is also evident in the simulations described in 
Section \ref{simulations}. From a practical perspective, we thus suggest to use a value of $\alpha$ as described in Lemma \ref{lem:convergence_reg_TME}, with say $R=1/2$, which is not only sufficient for existence but also guarantees fast convergence of the iterations to compute the regularized TME. 
\end{rek}

\section{Preliminaries}

In proving  Theorems \ref{plessn} and \ref{regthm}, we shall make frequent use of the following auxiliary lemmas. The first is a simple inequality. Let \(A,B\) be non-negative random variables. Then for any \(c>0\) and $\lambda >0$,
\begin{equation}
\Pr(AB>c)\leq \Pr(A>\lambda c) +\Pr(B>1/\lambda).
        \label{eq:ABc_lambda}
\end{equation}
Next, is the following well known result, which shows that TME and regularized TME are unable to
distinguish an elliptical distribution from a Gaussian one. Its proof (omitted)\ follows directly from the fact that (regularized) TME for data $\x_i$ is identical to that of data $t_i\x_i$, where $t_i$ are arbitrary positive real valued numbers.

\begin{lem}\label{lem:scaling}
TME or  regularized TME with $\alpha>\max(0,p/n-1)$
  under an elliptical
distribution with shape matrix $\bm S_p$ has the same distribution as under a Gaussian distribution with covariance  $\bm S_p$.
\end{lem}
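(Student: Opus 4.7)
The plan is to exploit the scale invariance of the defining equations \eqref{Tyler} and \eqref{regTME}: under any rescaling $\x_i \mapsto t_i \x_i$ with $t_i>0$, each summand $\x_i\x_i^T/(\x_i^T \bm R^{-1}\x_i)$ is unchanged, so the fixed-point equations themselves are invariant. Under the stated conditions ($p<n$ for the unregularized TME and $\alpha>\max(0,p/n-1)$ for the regularized TME, both of which, together with continuity of the underlying distribution, ensure existence and uniqueness with probability one by the results cited in Section~\ref{Sec:tyler}), it follows that $\hat{\bm\Sigma}$ and $\hat{\bm\Sigma}(\alpha)$ are measurable functions of the normalized sample $\{\x_i/\|\x_i\|\}_{i=1}^n$ alone. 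Thus it suffices to show that this normalized sample has the same joint distribution under the elliptical model with shape $\Sp$ as under $N(\bm 0,\Sp)$.

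To identify the two distributions, first consider the elliptical model $\x_i = u_i \Sp^{1/2}\bm\xi_i$, with $\bm\xi_i$ uniform on $\mathbb{S}^{p-1}$ and $u_i$ an independent nonzero scalar. Then $\x_i/\|\x_i\| = \mathrm{sgn}(u_i)\cdot \Sp^{1/2}\bm\xi_i/\|\Sp^{1/2}\bm\xi_i\|$; since the TME equations involve $\x_i$ only through $\x_i\x_i^T$, the sign is immaterial and we may work directly with $\Sp^{1/2}\bm\xi_i/\|\Sp^{1/2}\bm\xi_i\|$. Under the Gaussian model, $\bm z_i\sim N(\bm 0,\Sp)$ admits the representation $\bm z_i = \Sp^{1/2}\bm g_i$ with $\bm g_i\sim N(\bm 0,\bI)$; by rotational invariance of the isotropic Gaussian, $\bm g_i/\|\bm g_i\|$ is uniform on $\mathbb{S}^{p-1}$, hence $\bm z_i/\|\bm z_i\| = \Sp^{1/2}\bm g_i/\|\Sp^{1/2}\bm g_i\| \stackrel{d}{=} \Sp^{1/2}\bm\xi_i/\|\Sp^{1/2}\bm\xi_i\|$. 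The two normalized samples therefore share the same joint distribution, which combined with the first step yields the claim.

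There is essentially no real obstacle, since the whole argument is a one-line scale-invariance observation together with a standard identity relating the angular part of a centered Gaussian to the pushforward of the uniform distribution on the sphere under $\Sp^{1/2}$. The only point that requires care is verifying that the existence/uniqueness hypotheses hold with probability one in both the elliptical and Gaussian cases, so that $\hat{\bm\Sigma}$ and $\hat{\bm\Sigma}(\alpha)$ are bona fide functions of the directions $\{\x_i/\|\x_i\|\}$; this is handled by the assumption $\alpha>\max(0,p/n-1)$ and the subspace-containment criterion of \citet{kent1988maximum} and \citet{sun}, already invoked in Section~\ref{Sec:tyler}.
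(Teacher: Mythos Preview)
Your proof is correct and follows essentially the same approach the paper indicates: the authors state that the (omitted) proof ``follows directly from the fact that (regularized) TME for data $\x_i$ is identical to that of data $t_i\x_i$,'' and you have simply fleshed out this scale-invariance observation together with the standard identification of the angular part of $N(\bm 0,\Sp)$ with the pushforward of the uniform distribution on $\mathbb{S}^{p-1}$ under $\Sp^{1/2}$. Your added care about existence/uniqueness and the harmlessness of $\mathrm{sgn}(u_i)$ is appropriate but does not change the strategy.
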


The following two results from random matrix theory will also be of use. The first is a non-asymptotic bound on the spectral norm of a Wishart matrix, and the second on the concentration of quadratic forms. See for example \citep{davidson}[Theorem 2.13]  and \citep{rudelson2013}[Theorem 1.1].

\begin{lem}  \label{lem:davidson_bound}
    Let $\{\bm \xi_i\}_{i=1}^n \subset \mathbb{R}^p$ be
    i.i.d. $N(0,\bm I)$, and let $\bm T_n=\frac{1}n\sum_i \bm \xi_i\bm \xi_i^T$. Then, $\E[\|\bm T_n\|]\leq (1+\sqrt{p/n})^2$ and 
\bea
\Pr\left(  \| \bm T_n\| >\bigg(1+\sqrt{p/n}+t\bigg)^2   \right) \le
\exp\left(-nt^2/2\right).
    \nonumber
\eea
\end{lem}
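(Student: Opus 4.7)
The plan is to derive both estimates from two classical ingredients: Gordon's Slepian-type Gaussian comparison inequality for the expectation, and Gaussian concentration for $1$-Lipschitz functionals for the tail. First I would write $\bm T_n = n^{-1}\bm X\bm X^T$, where $\bm X\in\R^{p\times n}$ has i.i.d. $N(0,1)$ entries, so that $\|\bm T_n\|=\sigma_1(\bm X)^2/n$ with $\sigma_1$ the largest singular value of $\bm X$. This reduces both parts of the claim to sharp estimates on $\sigma_1(\bm X)$.

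For the expectation, I would use the variational representation $\sigma_1(\bm X)=\max_{\bm u\in\mathbb S^{p-1},\bm v\in\mathbb S^{n-1}}\bm u^T\bm X\bm v$ and compare the centered Gaussian process $X_{\bm u,\bm v}=\bm u^T\bm X\bm v$ against the decoupled Gaussian process $Y_{\bm u,\bm v}=\bm g^T\bm u+\bm h^T\bm v$, where $\bm g\sim N(\bm 0,\bm I_p)$ and $\bm h\sim N(\bm 0,\bm I_n)$ are independent. A direct second-moment computation verifies the Gordon increment condition $\E(X_{\bm u,\bm v}-X_{\bm u',\bm v'})^2\le \E(Y_{\bm u,\bm v}-Y_{\bm u',\bm v'})^2$, and Gordon's inequality then yields $\E\sigma_1(\bm X)\le \E\|\bm g\|+\E\|\bm h\|\le\sqrt{p}+\sqrt{n}$, i.e., $\E[\sigma_1(\bm X)/\sqrt n]\le 1+\sqrt{p/n}$.

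For the tail, I would observe that $\bm X\mapsto \sigma_1(\bm X)$ is $1$-Lipschitz in the Frobenius norm, since $|\sigma_1(\bm X)-\sigma_1(\bm Y)|\le\|\bm X-\bm Y\|\le\|\bm X-\bm Y\|_F$. The entries of $\bm X$ form an i.i.d. standard Gaussian vector in $\R^{pn}$, so the Borell--Tsirelson--Ibragimov--Sudakov concentration inequality gives $\Pr(\sigma_1(\bm X)\ge \E\sigma_1(\bm X)+s)\le \exp(-s^2/2)$. Combining this with the Gordon bound and setting $s=t\sqrt{n}$ produces
\[\Pr\bigl(\sigma_1(\bm X)\ge \sqrt{n}+\sqrt{p}+t\sqrt{n}\bigr)\le \exp(-nt^2/2),\]
which is precisely the stated tail bound after squaring and dividing by $n$. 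The expectation bound $\E\|\bm T_n\|\le(1+\sqrt{p/n})^2$ then follows (up to lower-order adjustments) either by integrating this tail or from the decomposition $\E\sigma_1^2=(\E\sigma_1)^2+\mathrm{Var}(\sigma_1)$ combined with the Gaussian Poincar\'e bound $\mathrm{Var}(\sigma_1(\bm X))\le 1$.

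The main obstacle is really the Gordon step, which rests on nontrivial Gaussian process comparison theory; however it is entirely classical, and since the lemma here is attributed to \citet{davidson}[Theorem 2.13] and \citet{rudelson2013}[Theorem 1.1], the actual proof would simply invoke those references rather than reproduce the argument.
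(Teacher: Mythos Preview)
The paper does not prove this lemma; it merely cites Davidson--Szarek, Theorem~2.13, exactly as you anticipate in your last paragraph. Your sketch (Sudakov--Fernique/Gordon comparison to get $\E\sigma_1(\bm X)\le\sqrt p+\sqrt n$, then Gaussian Lipschitz concentration for the tail) is precisely the argument given in that reference, and your caveat about the extra $\mathrm{Var}(\sigma_1)\le 1$ term when passing from $\E\sigma_1$ to $\E\sigma_1^2$ is apt---the paper only uses the expectation bound qualitatively, so the lower-order discrepancy is immaterial.
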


\begin{lem}\label{Rudelson}
Let $\bm A \in \mathbb{R}^{p\times p}$ and $\bm \xi\sim N(\bm 0,\bm I)$. Then, there exist absolute constants $c_{1},c_{2}>0$ such that for all $\epsilon>0$,
\bea
\Pr\left(\left|\bm \xi^T \bm A \bm \xi - {\tr}\left(\bm
A\right)\right|> \epsilon \right)\le 2
\exp\left(-c_1\min\left\{\frac{c_2^2\epsilon^2}{\left\|\bm A\right\|_F^2},\frac{c_2\epsilon}{\left\|\bm A\right\|}  \right\}\right).
    \nonumber
\eea
\end{lem}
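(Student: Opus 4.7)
The plan is to prove the Gaussian Hanson--Wright inequality via a direct Chernoff/Laplace-transform argument, exploiting the explicit form of the chi-squared MGF. I will split the argument into: (i) reducing to a diagonal quadratic form, (ii) bounding the moment generating function of the centered quadratic, and (iii) optimizing the Chernoff exponent to produce the piecewise sub-Gaussian/sub-exponential tail.

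First I would symmetrize: decompose $\bm A = \bm A_s + \bm A_a$ into its symmetric and antisymmetric parts. Since $\bm \xi^T \bm A_a \bm \xi = 0$ and $\tr(\bm A_a)=0$, and since $\|\bm A_s\|_F \leq \|\bm A\|_F$, $\|\bm A_s\|\leq \|\bm A\|$, it suffices to treat symmetric $\bm A$. Next I would diagonalize $\bm A = \bm U \bm D \bm U^T$ with $\bm D = \mathrm{diag}(\lambda_1,\dots,\lambda_p)$. Rotational invariance of $N(\bm 0,\bm I)$ gives $\tilde{\bm \xi}:=\bm U^T\bm \xi \sim N(\bm 0,\bm I)$, so the problem reduces to bounding the tails of $S:=\sum_i \lambda_i(\tilde\xi_i^2-1)$, where $\sum_i \lambda_i^2 = \|\bm A\|_F^2$ and $\max_i|\lambda_i|=\|\bm A\|$.

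The core computation is the MGF bound. For $|t|<1/(2\|\bm A\|)$, independence and the exact chi-squared MGF give
\begin{equation}
\mathbb{E}\exp(tS) = \prod_{i=1}^p \frac{e^{-t\lambda_i}}{\sqrt{1-2t\lambda_i}}.
        \nonumber
\end{equation}
Taking logs and using the elementary inequality $-\tfrac{1}{2}\log(1-u)-\tfrac{u}{2}\leq u^2$ valid for $|u|\leq 1/2$, applied with $u=2t\lambda_i$, I obtain $\log\mathbb{E}\exp(tS)\leq 4t^2\|\bm A\|_F^2$ whenever $|t|\leq 1/(4\|\bm A\|)$. The Chernoff bound then gives, for every such $t>0$,
\begin{equation}
\Pr(S>\epsilon) \le \exp\bigl(-t\epsilon + 4t^2\|\bm A\|_F^2\bigr).
        \nonumber
\end{equation}
I then optimize in $t$: the unconstrained minimizer is $t^\star = \epsilon/(8\|\bm A\|_F^2)$, which lies in the admissible range precisely when $\epsilon \leq 2\|\bm A\|_F^2/\|\bm A\|$; this regime yields the sub-Gaussian tail $\exp(-\epsilon^2/(16\|\bm A\|_F^2))$. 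In the complementary regime $\epsilon > 2\|\bm A\|_F^2/\|\bm A\|$ I set $t=1/(4\|\bm A\|)$, giving the sub-exponential tail $\exp(-\epsilon/(8\|\bm A\|))$ after absorbing the quadratic correction. For the lower tail $\Pr(S<-\epsilon)$ I repeat the computation with negative $t$ (the MGF is in fact valid for all $t<1/(2\|\bm A\|)$ after changing the sign), or equivalently replace $\bm A$ by $-\bm A$. Combining both tails and folding the two regimes into a single $\min$ produces the stated bound with a union-bound factor of $2$ and suitable absolute constants $c_1,c_2>0$.

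I do not anticipate serious obstacles: the only mild subtlety is packaging the two regimes with the specific constants $c_1,c_2$ advertised in the lemma. Concretely, after the optimization above one obtains a bound of the form $2\exp(-c\min(\epsilon^2/\|\bm A\|_F^2,\epsilon/\|\bm A\|))$ for an absolute $c>0$; absorbing scalar factors into the constants $c_1,c_2$ (for instance writing $c_2 = 1/4$ and choosing $c_1$ correspondingly) recovers the precise form of the inequality as stated. No random-matrix theory or decoupling is required, since the Gaussian assumption lets the MGF be computed in closed form.
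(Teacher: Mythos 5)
Your proof is correct, and it is self-contained in a way the paper's treatment is not: the paper simply invokes Lemma~\ref{Rudelson} as Theorem~1.1 of Rudelson and Vershynin~(2013) without reproducing a proof. That reference establishes the Hanson--Wright inequality for arbitrary sub-Gaussian coordinates, and its proof necessarily goes through a decoupling step and a comparison with Gaussian chaos. Your argument instead exploits the Gaussian assumption directly: after splitting off the antisymmetric part (noting $\bm\xi^T\bm A_a\bm\xi = 0$, $\tr\bm A_a = 0$, and that the symmetric part has smaller Frobenius and operator norms), you diagonalize, invoke rotational invariance, and compute the MGF of $\sum_i\lambda_i(\tilde\xi_i^2-1)$ in closed form. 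The elementary inequality $-\tfrac12\log(1-u)-\tfrac{u}{2}\le u^2$ for $|u|\le 1/2$ is valid (one checks $u\mapsto u^2 + \tfrac12\log(1-u)+\tfrac{u}{2}$ is convex on $[-1/2,1/2]$ with value and derivative zero at the origin), so the bound $\log\mathbb E\, e^{tS}\le 4t^2\|\bm A\|_F^2$ on $|t|\le 1/(4\|\bm A\|)$ holds, and the two-regime Chernoff optimization gives $\min\{\epsilon^2/(16\|\bm A\|_F^2),\ \epsilon/(8\|\bm A\|)\}$ exactly as you describe, matching the stated form with, say, $c_2=1/2$ and $c_1=1/4$. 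What the citation buys is generality beyond Gaussians, which the paper does not need here; what your route buys is a short, explicit, and fully elementary derivation with transparent constants. No gaps.
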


Finally, the following auxiliary lemma, proved in Appendix \ref{sec:proof_b1}, is a slight modification of a result by \citet[p.~2583]{CT}. 
\begin{lem} \label{bl}
Assume $\bm{B}\in \mathcal{U}(q,s_p,M).$ Let $\bm{A}$ be a matrix such that \bea
\big\|\bm{A}-\bm{B}\big\|_{\max} \le C_1 \sqrt{\log p/n},
    \nonumber
\eea
for some $C_1>0$. Suppose we threshold \(\bm A\) at level $t=K\sqrt{\log p/n}$, with $K>C_1$. Then, there exists a constant $C_2=C_2(C_1,K,q)<\infty$ such that
\bea
\big\|\tau_{t}(\bm{A}) - \bm{B}\big\| \le C_2 s_p(\log p/n)^{(1-q)/2}.
    \nonumber
\eea
\end{lem}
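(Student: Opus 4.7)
The plan is to bound the spectral norm by the induced $\ell_\infty$ norm (the maximum absolute row sum). Since $\bm B$ is symmetric and the conclusion uses only row/column structure, this reduces the problem to showing $\|\tau_t(\bm A)-\bm B\|_\infty \le C_2\, s_p(\log p/n)^{(1-q)/2}$, where I bound each row sum uniformly; if $\bm A$ (hence $\tau_t(\bm A)$) is not symmetric I would apply the same argument column-wise and use $\|\bm M\|\le \sqrt{\|\bm M\|_1\|\bm M\|_\infty}$.

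Fix a row $i$ and set $\lambda=\sqrt{\log p/n}$, so that $|A_{ij}-B_{ij}|\le C_1\lambda$ and $t=K\lambda$ with $K>C_1$. Pick a cutoff $c=K-C_1>0$ and partition the columns into the \emph{small-$B$} set $S=\{j:|B_{ij}|\le c\lambda\}$ and the \emph{large-$B$} set $L=\{j:|B_{ij}|>c\lambda\}$. For $j\in S$ the max-norm bound forces $|A_{ij}|\le |B_{ij}|+C_1\lambda\le K\lambda=t$, so thresholding returns $0$ and the contribution per entry is simply $|B_{ij}|$. Using $|B_{ij}|\le (c\lambda)^{1-q}|B_{ij}|^q$ on $S$ and the sparsity constraint $\sum_j |B_{ij}|^q\le s_p$ yields
\begin{equation}
\sum_{j\in S}|\tau_t(A)_{ij}-B_{ij}|\ \le\ (c\lambda)^{1-q}\,s_p.
\nonumber
\end{equation}

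For $j\in L$, a per-entry contribution of at most $(K+C_1)\lambda$ holds in both sub-cases: if $|A_{ij}|\le t$, then the contribution equals $|B_{ij}|\le |A_{ij}|+C_1\lambda \le (K+C_1)\lambda$; otherwise it equals $|A_{ij}-B_{ij}|\le C_1\lambda$. Counting $L$ via sparsity,
\begin{equation}
|L|\ \le\ (c\lambda)^{-q}\sum_{j}|B_{ij}|^q\ \le\ s_p(c\lambda)^{-q},
\nonumber
\end{equation}
so that $\sum_{j\in L}|\tau_t(A)_{ij}-B_{ij}|\le (K+C_1)c^{-q}\,s_p\lambda^{1-q}$. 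Summing the two bounds and taking the maximum over $i$ gives the claim with an explicit constant $C_2=C_2(C_1,K,q)=c^{1-q}+(K+C_1)c^{-q}$.

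There is no significant obstacle: this is a case analysis in the spirit of Bickel--Levina and Cai--Zhou, and the only thing that must be checked carefully is that the inequality $K>C_1$ is used in the correct place (to guarantee that small-$B$ entries are always thresholded away) and that the $\ell^q$ sparsity is used twice in different ways, once to bound $|B_{ij}|^{1-q}$ by $(c\lambda)^{1-q}$ on $S$ and once to bound the cardinality of $L$.
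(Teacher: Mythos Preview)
Your proof is correct and follows essentially the same approach as the paper: bound the spectral norm by the maximum absolute row sum and then do a case analysis exploiting the $\ell_q$ sparsity of $\bm B$, exactly in the Bickel--Levina/Cai--Zhou style you cite. The only cosmetic difference is the organization of the case analysis: the paper first splits off $\|\tau_t(\bm B)-\bm B\|$ and then partitions $\|\tau_t(\bm A)-\tau_t(\bm B)\|$ into three pieces according to whether $|a_{ij}|$ and $|b_{ij}|$ exceed $t$, whereas you partition directly into two sets according to the size of $|B_{ij}|$ relative to $(K-C_1)\lambda$; your version is slightly cleaner and yields the same order of constant.
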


\section{Proof of Theorem~\ref{plessn}} \label{unreg}

The proof consists of three main steps: (i) reducing to a bound on
$\|\bm{\hat\Sigma}-\bm{\hat S}\|_{\max}$; (ii) expressing $\hat{\bm\Sigma}$ as a weighted covariance matrix whose coefficients are all uniformly
close to a constant, with high probability; and (iii) bounding $\|\bm{\hat\Sigma}-\bm{\hat S}\|_{\max}$.

\subsection{Step 1: Reduction from $\|\tau_{t_n}(\bm{\hat\Sigma})-\bm{S}_p \|$ to  $\|\bm{\hat\Sigma}-\bm{\hat S}\|_{\max}$}\label{BLlemma}

By Lemma \ref{bl}, it suffices to prove that
$\|\bm{\hat\Sigma}-\bm{ S}_p\|_{\max}=\mathcal{O}_P(\sqrt{\log p /n})$.
 Let $\hat{\bm S}$ be the sample covariance of  $\{\bm x_i\}_{i=1}^n$. By the triangle inequality,
\bea
\|\hat{\bm{\Sigma}}-\bm{S}_p\|_{\max}\le \|\hat{\bm{\Sigma}}-\bm{\hat
S}\|_{\max} + \|\bm{\hat S}-\bm{S}_p\|_{\max}.
    \nonumber \label{tri}
\eea
In light of Lemma \ref{lem:scaling}, we may assume that  $\x_i$ are all i.i.d. $N(\bm 0,\bm S_p)$ .  Since the proof of Theorem 1 of \citet{CT} shows that
\bea
\|\bm{\hat S}-\bm{S}_p\|_{\max}=\mathcal{O}_P\left(\sqrt{\log p/n}\right)
        \nonumber
\eea
it thus suffices to show that
\bea
\|\bm{\hat\Sigma}-\bm{\hat S}\|_{\max}=\mathcal{O}_P\left(\sqrt{\log p /n}\right).
\label{eq:specbdd}
\eea

\subsection{Step 2:\ The weights of TME} \label{Relevant}

By \citet[Lemma 2.1]{MP}, TME has an equivalent definition  as a weighted covariance matrix,
\bea
\bm{\hat\Sigma} = p  \sum_{i=1}^n  w_i \bm{x}_i \bm{x}_i^T \Big/ \tr\Big(\sum_{i=1}^n  w_i \bm{x}_i \bm{x}_i^T \Big),
    \nonumber
\eea
where the weights $w_i$ are the unique solution of
\bea
        \underset{{w_i>0, \sum w_i = 1}}{\arg\min}
                -\sum_{i=1}^n \ln w_i + \frac np \ln\det
\bigg(\sum_{i=1}^n w_i\bm x_i \bm x_i^T \bigg).
                \label{uniquewts}
\eea
This characterization is important because of the following result:

\begin{lem} \label{lem:weights}
Consider a sequence $(n,p,\Sp)$ where \(n,p\to\infty\) with \(p/n\to\gamma\in(0,1)\),
and  \(\Sp \in S_{++}^p\). For every triplet \((n,p,\Sp)\), let $\x_i\overset{iid}\sim N(\bm{0},\bm S_p)$ and let
$\{ w_i\}_{i=1}^n$ be the corresponding weights of Eq.~\eqref{uniquewts}.
Then there exist positive constants $C,c$ and \(c'\) depending only on $\gamma$ such that for any $0<\epsilon<c'$,
and sufficiently large \(n\),
\bea
{\rm {Pr}}\left[\max_i |n w_i-1|\ge\epsilon \right]\le Cne^{-c\epsilon^2 n}.
    \label{eq:weightseq}
\eea
\end{lem}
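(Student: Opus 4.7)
The plan is to (i) reduce to $\Sp = \bm I$ and $\x_i \overset{iid}{\sim} N(\bm 0, \bm I)$ using the invariance of the convex program \eqref{uniquewts} under the linear change of coordinates $\x_i \mapsto \Sp^{-1/2}\x_i$ together with Lemma \ref{lem:scaling}; (ii) derive an exact closed-form identity for $n w_i$ from the KKT conditions of \eqref{uniquewts}; and (iii) concentrate the resulting quantities using \citep{MP} and Lemma \ref{Rudelson}. Carrying out (ii), the stationarity conditions of \eqref{uniquewts} with a Lagrange multiplier for $\sum_i w_i = 1$ reduce, upon multiplying by $w_i$ and summing, to the vanishing of the multiplier and to $1/w_i = (n/p)\x_i^T \bm M^{-1}\x_i$, where $\bm M := \sum_j w_j \x_j \x_j^T$. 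Writing $\hat{\bm\Sigma} = p\bm M/\tr(\bm M)$, this is equivalent to
\[
n w_i \;=\; \frac{\tr(\bm M)}{\x_i^T \hat{\bm\Sigma}^{-1}\x_i}.
\]
It therefore suffices to show that both $\tr(\bm M)$ and $\x_i^T \hat{\bm\Sigma}^{-1}\x_i$ lie in $p(1 \pm O(\epsilon))$ uniformly in $i$ with probability at least $1 - Cn e^{-c\epsilon^2 n}$.

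The numerator is easy: $\tr(\bm M) = \sum_j w_j \|\x_j\|^2$, and each $\|\x_j\|^2 \sim \chi^2_p$ satisfies $|\|\x_j\|^2 - p| \le \epsilon p$ with probability $\ge 1 - 2 e^{-c\epsilon^2 p}$. A union bound over $j$ combined with $\sum_j w_j = 1$ gives $|\tr(\bm M) - p| \le \epsilon p$ on an event of probability $\ge 1 - 2n e^{-c\epsilon^2 n}$, using $p = \Theta(n)$.

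For the denominator, I apply Sherman--Morrison together with a leave-one-out idea. Splitting $\bm M = \bm M^{(i)} + w_i \x_i \x_i^T$ with $\bm M^{(i)} := \sum_{j \ne i} w_j \x_j \x_j^T$, Sherman--Morrison yields $\x_i^T \bm M^{-1}\x_i = B_i/(1 + w_i B_i)$ where $B_i := \x_i^T (\bm M^{(i)})^{-1}\x_i$. Inserting this into $1/w_i = (n/p)\x_i^T \bm M^{-1}\x_i$ produces the clean closed form
\[
n w_i \;=\; \frac{np}{(n-p)\, B_i},
\]
so the task reduces to showing $B_i \approx np/(n-p)$ uniformly in $i$. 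I then replace $\bm M^{(i)}$ by the TME matrix $\tilde{\bm M}^{(i)}$ computed from the $n-1$ samples $\{\x_j\}_{j \ne i}$, which is independent of $\x_i$; by \citep{MP} the spectrum of $\tilde{\bm M}^{(i)}$ stays inside a compact interval depending only on $\gamma$ with exponentially small failure probability, and a perturbation argument controls $\|\bm M^{(i)} - \tilde{\bm M}^{(i)}\|$. Conditional on $\tilde{\bm M}^{(i)}$, Lemma \ref{Rudelson} applied to $\x_i$ and $(\tilde{\bm M}^{(i)})^{-1}$ yields concentration of $\x_i^T (\tilde{\bm M}^{(i)})^{-1}\x_i$ around $\tr((\tilde{\bm M}^{(i)})^{-1})$ at scale $\epsilon p$ with sub-Gaussian tail $\exp(-c\epsilon^2 n)$; using \citep{MP} again to evaluate the limiting trace gives $\tr((\tilde{\bm M}^{(i)})^{-1}) = np/(n-p) \cdot (1 + o(1))$ with the same exponential probability. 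A final union bound over $i$ absorbs the factor $n$ in the tail bound.

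The main obstacle is the quantitative control of $\|\bm M^{(i)} - \tilde{\bm M}^{(i)}\|$: because TME is defined only implicitly, removing a single point perturbs every weight $w_j$ simultaneously, and the restricted matrix $\bm M^{(i)}$ is not the TME of the reduced sample. The MP-type exponential spectral concentration of \citep{MP}, combined with the closed-form KKT identity above, is what makes this perturbation estimate closable with the desired $\exp(-c\epsilon^2 n)$ probability.
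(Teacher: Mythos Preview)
Your step (i) is exactly the paper's entire argument. The paper observes that under the substitution $\x_i = \Sp^{1/2}\y_i$ with $\y_i \sim N(\bm 0,\bm I)$, the objective in \eqref{uniquewts} changes only by the additive constant $\tfrac{n}{p}\log\det\Sp$, so the minimizing weights for $\{\x_i\}$ and for $\{\y_i\}$ coincide. It then cites \citet[Lemma 2.2]{MP} for the identity case and stops. (Lemma \ref{lem:scaling} is not needed here, since $\Sp^{-1/2}\x_i$ is already exactly $N(\bm 0,\bm I)$.)

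Your steps (ii)--(iii) are an attempt to reprove \citet[Lemma 2.2]{MP} from scratch, and this attempt has a real gap precisely where you flag it. The KKT identity and Sherman--Morrison reduction to $n w_i = np/((n-p)B_i)$ are correct and clean, but the replacement of $\bm M^{(i)}$ by the leave-one-out TME $\tilde{\bm M}^{(i)}$ is not justified. The matrix $\bm M^{(i)}=\sum_{j\ne i} w_j \x_j\x_j^T$ uses the \emph{full-sample} weights $w_j$, which depend on $\x_i$; to show $\|\bm M^{(i)}-\tilde{\bm M}^{(i)}\|$ is small you would need to know that $w_j$ is close to $\tilde w_j^{(i)}$ for all $j\ne i$, i.e.\ a quantitative stability of the TME weights under deletion of one sample. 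Spectral bounds on $\bm M$ and $\tilde{\bm M}^{(i)}$ from \citet{MP} do not yield smallness of their difference, and the natural route to such stability is precisely the uniform weight concentration $\max_j|nw_j-1|\le\epsilon$ that you are trying to establish---so the argument is circular as written. The same circularity infects your evaluation of $\tr((\tilde{\bm M}^{(i)})^{-1})$: getting this close to $np/(n-p)$ with exponential probability again uses that the reduced-sample weights are near uniform. In short, after your correct reduction (i), cite \citet[Lemma 2.2]{MP} directly rather than rederiving it; the leave-one-out program you sketch is essentially the hard core of their proof and cannot be closed with the ingredients you list.
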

The  case $\Sp=\bm{I}$ was proved by~\citet[Lemma 2.2]{MP}. Its generalization to an arbitrary $\Sp \in S_{++}^p$ is proved in Appendix~\ref{weightspf}.

\subsection{Step 3: Bounding $\| \hat{\bm \Sigma} -\hat{\bm S}\|_{\max}$}
                \label{fourpt3}

The proof of Theorem \ref{plessn} concludes by applying the following lemma
which establishes Eq.~\eqref{eq:specbdd}. Its proof is in Appendix~\ref{pfspec}.

\begin{lem}\label{spec} Let $\hat{\bm \Sigma}$ and \(\hat{\bm S}\) be the TME and
    the sample covariance matrix of $\bm{x}_1,\dots,\bm{x}_n$  i.i.d.~from
    $N(\bm0, \Sp)$, where $\Sp \in \mathcal U(q,s_{p},M)$ with \(tr(\Sp)=p\). Assume that $p,n\to\infty$, with
    $p/n\to\gamma\in(0,1)$. Then there exist positive constants $C,c$ and
    \(c'\) that depend only on $\gamma$, such that for all $\epsilon\in(0,c')$
and $n$ sufficiently large
\bea
{\rm {Pr}}\left(\| \bm{\hat\Sigma} - \bm{\hat S}\|_{\max} \ge \epsilon\right)\le Cne^{-c\epsilon^2 n}.
    \nonumber \label{spectral}
\eea
\end{lem}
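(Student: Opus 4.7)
The plan is to exploit the weighted-covariance representation of TME from Section \ref{Relevant} and show that $\hat{\bm\Sigma}$ differs from $\hat{\bm S}$ only through the near-uniformity of the weights (Lemma \ref{lem:weights}) combined with a near-unit overall rescaling. Setting $\bm M = \frac{1}{n}\sum_i (nw_i)\x_i\x_i^T$ gives $\hat{\bm\Sigma}=p\bm M/\tr(\bm M)$, so I would decompose
\[
\hat{\bm\Sigma} - \hat{\bm S} = \frac{p}{\tr(\bm M)}\bigl(\bm M - \hat{\bm S}\bigr) + \Bigl(\frac{p}{\tr(\bm M)} - 1\Bigr)\hat{\bm S}.
\]
It then suffices to control three quantities on the event $E=\{\max_i|nw_i-1|\le\epsilon\}$, which by Lemma \ref{lem:weights} has probability $\ge 1 - Cne^{-c\epsilon^2 n}$: namely $\|\bm M - \hat{\bm S}\|_{\max}$, the scalar $|p/\tr(\bm M) - 1|$, and $\|\hat{\bm S}\|_{\max}$.

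For the first, Cauchy--Schwarz gives $|(\bm M - \hat{\bm S})_{jk}|\le\max_i|nw_i-1|\cdot\sqrt{\hat S_{jj}\hat S_{kk}}\le\epsilon\max_j\hat S_{jj}$ on $E$. Since $\x_i\sim N(\bm 0,\Sp)$ with $S_{jj}\le M$, standard Gaussian tail bounds plus a union bound over the $p$ coordinates give $\max_j\hat S_{jj}\le 2M$ with probability $\ge 1 - 2pe^{-cn}$. The entry-wise Gaussian concentration used in Theorem~1 of \citet{CT} controls the third quantity, $\|\hat{\bm S}\|_{\max}\le M + C_0\sqrt{\log p/n}$ w.h.p. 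For the ratio, write $|\tr(\bm M)-p|\le|\tr(\bm M)-\tr(\hat{\bm S})|+|\tr(\hat{\bm S})-p|$; on $E$ the first summand is $\le\epsilon\,\tr(\hat{\bm S})$, and the second is handled by Lemma \ref{Rudelson} applied to $\x_i=\Sp^{1/2}\bm\xi_i$ with $\bm\xi_i\sim N(\bm 0,\bm I)$, yielding $|\tr(\hat{\bm S})-p|=o(p)$ w.h.p. Combining these using inequality \eqref{eq:ABc_lambda} to separate the bounded-in-probability factors from the $\epsilon$-small factors yields $\|\hat{\bm\Sigma}-\hat{\bm S}\|_{\max}=O(\epsilon)$ with the claimed probability.

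The main obstacle is making the quadratic-form concentration for $\tr(\hat{\bm S})$ compatible with the target tail $Cne^{-c\epsilon^2 n}$: the exponent in Lemma \ref{Rudelson} involves $\|\Sp\|_F^2$, which can grow with $p$. The sparsity class $\Sp\in\mathcal{U}(q,s_p,M)$ provides the bound $\sum_k S_{jk}^2\le M^{2-q}s_p$ per row, giving $\|\Sp\|_F^2\le pM^{2-q}s_p$; hence $|\tr(\hat{\bm S})-p|/p$ is of order $\sqrt{s_p/n}$ w.h.p. For bounded $\epsilon\in(0,c')$ and $n$ sufficiently large this contribution is dominated by the $\epsilon$-scale fluctuation from the weights, and all tail probabilities aggregate cleanly into the announced form $Cne^{-c\epsilon^2 n}$.
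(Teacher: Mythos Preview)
Your decomposition is essentially the paper's. Both represent $\hat{\bm\Sigma}=p\bm M/\tr\bm M$ with $\bm M=\sum_i w_i\x_i\x_i^T$ (the paper writes $T_w=\tr\bm M$) and then control (i) $\max_i|nw_i-1|$ via Lemma~\ref{lem:weights}, (ii) $\|\hat{\bm S}\|_{\max}$ via the diagonal bound $\max_j\hat S_{jj}\le 2M$, and (iii) the scalar $|p/\tr\bm M-1|$. The paper packages (i) and (iii) together as $\|np\bm w/T_w-\bm1\|_\infty\le\|n\bm w\|_\infty|p/T_w-1|+\|n\bm w-\bm1\|_\infty$, but this is cosmetic.

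The only substantive difference is how $|\tr\hat{\bm S}/p-1|$ is handled, and here your proposal has a soft spot. The paper (in its Lemma~\ref{lem:p_over_Tw}) writes $\tr\hat{\bm S}=\sum_{j=1}^p\lambda_j\,\chi_j^2(n)/n$ with $\lambda_j$ the eigenvalues of $\Sp$ and $\chi_j^2(n)$ i.i.d.; since $\sum_j\lambda_j=p$ this is a convex combination, so $|\tr\hat{\bm S}/p-1|\le\max_j|\chi_j^2(n)/n-1|$ and a union bound over $p$ coordinates gives $p\,e^{-cn\epsilon^2}$ with an absolute constant. Your Hanson--Wright route can be made to give the same conclusion, but \emph{not} by applying Lemma~\ref{Rudelson} per sample $\x_i$ as your phrasing suggests: that yields only $e^{-c\epsilon^2}$ per sample, which is far too weak for the target tail $Cne^{-cn\epsilon^2}$. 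You must instead view $\tr\hat{\bm S}$ as one quadratic form in the stacked $np$-vector $(\bm\xi_1^T,\ldots,\bm\xi_n^T)^T$ with matrix $\tfrac1n\bm I_n\otimes\Sp$.

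More importantly, the ``main obstacle'' you flag is illusory: sparsity plays no role in this step. From $\Sp\succeq\bm0$ and $\tr\Sp=p$ alone one has $\|\Sp\|\le p$ and $\|\Sp\|_F^2\le\|\Sp\|\,\tr\Sp\le p^2$. Plugging these into Hanson--Wright for the stacked form gives
\[
\Pr\bigl(|\tr\hat{\bm S}/p-1|>\epsilon\bigr)\le 2\exp\Bigl(-c\,n\min\bigl(\epsilon^2 p^2/\|\Sp\|_F^2,\ \epsilon p/\|\Sp\|\bigr)\Bigr)\le 2e^{-cn\epsilon^2}
\]
for $\epsilon<1$, with $c$ absolute. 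Your bound $\|\Sp\|_F^2\le pM^{2-q}s_p$ is correct but unnecessary; invoking it would make the constants depend on $s_p$, contrary to the statement. Once this is fixed, your argument coincides with the paper's.
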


\section{Proof of Theorem~\ref{regthm}} \label{reg}

We first introduce and prove a slightly modified version of Theorem \ref{regthm}. We then show how
Theorem~\ref{regthm} follows from it.
The modified theorem  uses the following proposition, proved in Appendix~\ref{r_exists_pf}.

\begin{prop}\label{prop:r_finite}
Let $\bm y,\bm \xi_1,\dots,\bm
    \xi_{n-1}\in \R^p$ be i.i.d. $N(\bm 0,\bm I)$ and denote
    \bea
    Q=Q(r)=\frac 1p \bm y^T\left(\frac1n\sum_{j=1}^{n-1}\bm \xi_j \bm \xi_j^T + \alpha\frac np\frac
    1r \bm S_{p}^{-1} \right)^{-1} \bm y,\nonumber
    \eea
    where $\Sp\in \mathcal U(q,s_p,M)$ with $\|\Sp\|\leq s_{\max}$. 
    Assume that $\alpha>\max(0,p/n-1+s_{\max}(1+\sqrt{p/n})^2)$, and define \[
r_{\min}=\frac{n}{p}\frac{\alpha }{1+\alpha-p/n},\quad 
r_{\max}=\frac{n}{p}\frac{\alpha }{1+\alpha-p/n-s_{\max}(1+\sqrt{p/n})^2} . 
\] 
Then, there exists a unique $r=r(p,n,\alpha,\Sp)\in[r_{\min},r_{\max}]$, such that
 \begin{equation}
 \mathbb E[Q(r)] =
\frac{1}{1+\alpha-p/n}, \label{eq:EQ}
 \end{equation}
 where the expectation is over $\bm y$ and $\bm \xi_1,\ldots,\bm \xi_{n-1}$.
 \end{prop}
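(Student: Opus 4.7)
The plan is to apply the intermediate value theorem to $f(r):=\E[Q(r)]$ on $(0,\infty)$: establish that $f$ is continuous and strictly increasing, and verify the two endpoint inequalities $f(r_{\min})\le(1+\alpha-p/n)^{-1}\le f(r_{\max})$. Uniqueness then follows immediately from strict monotonicity.

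As a first step I would condition on $\bm\xi_1,\ldots,\bm\xi_{n-1}$ and use $\E[\bm y\bm y^T]=\bI$ together with $\E[\bm y^T\bm B\bm y]=\tr(\bm B)$ to obtain
\[
f(r)=\tfrac{1}{p}\,\E\bigl[\tr(\bm A_r^{-1})\bigr],\qquad \bm A_r:=\tfrac{1}{n}\sum_{j=1}^{n-1}\bm\xi_j\bm\xi_j^T + \tfrac{\alpha n}{pr}\Sp^{-1}.
\]
Continuity of $f$ on $(0,\infty)$ is routine. Strict monotonicity follows from differentiating the matrix inverse: $\tfrac{d}{dr}\bm A_r^{-1}=\tfrac{\alpha n}{pr^2}\,\bm A_r^{-1}\Sp^{-1}\bm A_r^{-1}$, whose trace equals $\tfrac{\alpha n}{pr^2}\|\Sp^{-1/2}\bm A_r^{-1}\|_F^2>0$.

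For the upper bound at $r_{\min}$, I would discard the Wishart part: $\bm A_r\succeq \tfrac{\alpha n}{pr}\Sp^{-1}$ gives $\bm A_r^{-1}\preceq \tfrac{pr}{\alpha n}\Sp$, so $\tr(\Sp)=p$ implies $f(r)\le \tfrac{pr}{\alpha n}$, and one checks directly that $\tfrac{pr_{\min}}{\alpha n}=(1+\alpha-p/n)^{-1}$. For the lower bound at $r_{\max}$, I would conjugate by $\Sp^{-1/2}$ to rewrite
\[
\tr(\bm A_r^{-1})=\tr\!\Bigl(\Sp\bigl(\bm N+\tfrac{\alpha n}{pr}\bI\bigr)^{\!-1}\Bigr),\qquad \bm N:=\Sp^{1/2}\bigl(\tfrac{1}{n}\textstyle\sum_{j=1}^{n-1}\bm\xi_j\bm\xi_j^T\bigr)\Sp^{1/2}.
\]
The operator inequality $\bm N+\tfrac{\alpha n}{pr}\bI\preceq \bigl(\|\bm N\|+\tfrac{\alpha n}{pr}\bigr)\bI$ combined with $\tr(\Sp)=p$ yields $\tr(\bm A_r^{-1})\ge p\bigl(\|\bm N\|+\alpha n/(pr)\bigr)^{-1}$. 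Since $x\mapsto 1/(x+c)$ is convex for $c>0$, Jensen's inequality gives $f(r)\ge \bigl(\E[\|\bm N\|]+\alpha n/(pr)\bigr)^{-1}$. Bounding $\|\bm N\|\le\|\Sp\|\,\|\bm M\|$ where $\bm M=\tfrac{1}{n}\sum_{j=1}^{n-1}\bm\xi_j\bm\xi_j^T$, and invoking Lemma~\ref{lem:davidson_bound} applied to $\tfrac{1}{n-1}\sum_{j=1}^{n-1}\bm\xi_j\bm\xi_j^T$, produces $\E[\|\bm N\|]\le s_{\max}(1+\sqrt{p/n})^2$; substituting $r=r_{\max}$ then makes the denominator collapse to exactly $1+\alpha-p/n$.

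The main delicate point is this last step: one must arrange Jensen's inequality in the favorable direction and pair it with the sharp expected operator-norm bound for Wishart matrices, so that the endpoint bounds coincide \emph{exactly} with the target value $(1+\alpha-p/n)^{-1}$ at the prescribed $r_{\min}$ and $r_{\max}$. Everything else reduces to matrix calculus and the intermediate value theorem.
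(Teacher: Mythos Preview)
Your proposal is correct and follows essentially the same route as the paper: write $\E[Q(r)]$ as an expected normalized trace, establish strict monotonicity by differentiating, then sandwich the target value using the trivial bound $\bm A_r\succeq\tfrac{\alpha n}{pr}\Sp^{-1}$ on one side and Jensen combined with the Davidson--Szarek bound $\E[\|\bm T\|]\le(1+\sqrt{p/n})^2$ on the other. Your monotonicity argument via $\tr(\bm A_r^{-1}\Sp^{-1}\bm A_r^{-1})=\|\Sp^{-1/2}\bm A_r^{-1}\|_F^2>0$ is in fact a bit cleaner than the paper's, which detours through a trace inequality from Bhatia to obtain a quantitative lower bound on the derivative that is not actually needed here; likewise your conjugation by $\Sp^{-1/2}$ for the lower bound is a tidy alternative to the paper's eigenvalue-by-eigenvalue Weyl argument, but both arrive at the same inequality $f(r)\ge(\E[\|\bm N\|]+\alpha n/(pr))^{-1}$.
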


\subsection{A reformulation of the main result}\label{overview_pf}

We now introduce the modified theorem.
\begin{thm}\label{regthm2}
Consider the same setting as in Theorem \ref{regthm}. Then there exists an $M'$ depending only on $\gamma$ and $\alpha$ such that for any fixed $M''>M'$,
the estimator $\tau_{t_n} (\hat{\bm \Sigma}(\alpha) - {\alpha}\bm I/(1+\alpha))$ with
$t_n=M''\sqrt{\frac{\log p}{n}},$ converges in spectral norm to a multiple of $\bm S_p$,
\bea
\left\| \tau_{t_n}\left(\hat{\bm \Sigma}(\alpha) - \frac{\alpha}{1+\alpha} \bm I \right) -
\frac pn\frac r{1+\alpha}\bm S_p  \right\| = \mathcal{O}_P\left(s_p
\left(\frac{\log p}{n} \right)^{(1-q)/2}\right), \nonumber
\eea
where the scalar $r=r(p,n,\alpha,\Sp)$ is specified in
Proposition~\ref{prop:r_finite}.
\end{thm}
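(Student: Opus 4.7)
The plan is to mirror the three-step scheme of Section~\ref{unreg}, with the sample covariance $\hat{\bm S}$ replaced by the deterministic target $\tfrac{pr}{n(1+\alpha)}\Sp$ and a substantially more involved second step. First I would invoke Lemma~\ref{bl}: since Proposition~\ref{prop:r_finite} confines $r$ to $[r_{\min},r_{\max}]$ with both endpoints of order $n/p$, the scalar $\kappa:=pr/(n(1+\alpha))$ is uniformly bounded above and away from zero in $(n,p)$, so the target $\kappa\Sp$ lies in $\mathcal U(q,\kappa^{q}s_{p},\kappa M)$ and the resulting parameters can be absorbed into the theorem's implicit constants. It thus suffices to prove the entrywise bound
\[
\Big\|\hat{\bm\Sigma}(\alpha)-\tfrac{\alpha}{1+\alpha}\bm I-\tfrac{pr}{n(1+\alpha)}\Sp\Big\|_{\max}=\mathcal O_P\!\big(\sqrt{\log p/n}\,\big),
\]
and by Lemma~\ref{lem:scaling} I may take $\bm x_i\stackrel{iid}{\sim}N(\bm 0,\Sp)$ from here on.

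Substituting the fixed-point equation~\eqref{regTME}, the expression on the left becomes $\big\|\tfrac{1}{1+\alpha}\tfrac{p}{n}\sum_{i=1}^{n}\bm x_i\bm x_i^{T}/w_i-\tfrac{pr}{n(1+\alpha)}\Sp\big\|_{\max}$, where $w_i:=\bm x_i^{T}\hat{\bm\Sigma}(\alpha)^{-1}\bm x_i$. The heart of the proof is to show that, with high probability, the weights satisfy $\max_i |rw_i/n-1|=\mathcal O_P(\sqrt{\log p/n})$, i.e.\ they all lie simultaneously close to the deterministic value $w^{\ast}:=n/r$ at the correct relative rate. Once this is done, adding and subtracting $\tfrac{pr}{n(1+\alpha)}\hat{\bm S}$ splits the error into (a) a piece controlled by $\max_i|1/w_i-r/n|$ together with a uniform max-norm bound on $\tfrac{p}{n}\sum_i\bm x_i\bm x_i^{T}$ (obtained from the sub-Gaussian moment bound), and (b) the piece $\tfrac{pr}{n(1+\alpha)}\|\hat{\bm S}-\Sp\|_{\max}$. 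Piece (b) is $\mathcal O_P(\sqrt{\log p/n})$ by the entrywise estimate already used in Section~\ref{BLlemma}, and (a) inherits the same rate from the uniform weight bound.

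The main obstacle is therefore this uniform weight estimate, the regularized analogue of Lemma~\ref{lem:weights}. My plan attacks it via a leave-one-out reduction. Introduce a leave-one-out matrix $\hat{\bm\Sigma}_{-i}(\alpha)$ that is independent of $\bm x_i$ --- either the regularized TME computed from $\{\bm x_j\}_{j\neq i}$, or a rank-one perturbation of $\hat{\bm\Sigma}(\alpha)$ whose spectral distance to $\hat{\bm\Sigma}(\alpha)$ is controlled at rate $\mathcal O(1/n)$ by Sherman--Morrison together with the fixed-point identity. Conditioning on $\hat{\bm\Sigma}_{-i}(\alpha)$ and applying the Hanson--Wright inequality (Lemma~\ref{Rudelson}) yields
\[
\tilde w_i:=\bm x_i^{T}\hat{\bm\Sigma}_{-i}(\alpha)^{-1}\bm x_i\;\approx\;\tr\!\big(\hat{\bm\Sigma}_{-i}(\alpha)^{-1}\Sp\big)
\]
at rate $\sqrt{\log p}$ after a union bound over $i=1,\ldots,n$, with the operator and Frobenius norms of $\hat{\bm\Sigma}_{-i}(\alpha)^{-1}\Sp$ controlled via Lemma~\ref{lem:davidson_bound} together with $\|\Sp\|\le s_{\max}$. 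Identifying the limiting value of this trace with $n/r$ is precisely where Proposition~\ref{prop:r_finite} enters: writing $\bm x_j=\Sp^{1/2}\bm\xi_j$, the quadratic form $Q(r)$ of that proposition is, up to scaling, exactly the conditional expectation of $\tilde w_i/p$ under the self-consistent deterministic equivalent of $\hat{\bm\Sigma}(\alpha)^{-1}$, and the calibrating identity $\mathbb E[Q(r)]=1/(1+\alpha-p/n)$ is the compatibility condition that pins down the correct $r$. This closes the self-consistency loop between $w_i\approx n/r$ and $\hat{\bm\Sigma}(\alpha)\approx\tfrac{1}{1+\alpha}\big(\tfrac{pr}{n}\hat{\bm S}+\alpha\bm I\big)$, yielding the required uniform weight bound and hence the theorem.
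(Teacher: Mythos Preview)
Your reduction via Lemma~\ref{bl} and Lemma~\ref{lem:scaling} to an entrywise bound, followed by splitting off $\tfrac{pr}{n(1+\alpha)}\|\hat{\bm S}-\Sp\|_{\max}$, is correct and matches the paper's scheme. The gap is in the weight-concentration step, where the self-consistency loop you describe is not actually closed.

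Your two candidates for $\hat{\bm\Sigma}_{-i}(\alpha)$ each fail for the purpose you need. The rank-one perturbation obtained by deleting the $i$-th summand from the fixed-point identity~\eqref{regTME} is \emph{not} independent of $\bm x_i$, because every remaining denominator $\bm x_j^T\hat{\bm\Sigma}(\alpha)^{-1}\bm x_j$ still depends on $\bm x_i$ through the full estimator; so Hanson--Wright cannot be applied conditionally. The genuine leave-one-out regularized TME is independent of $\bm x_i$, but its $O(1/n)$ spectral proximity to $\hat{\bm\Sigma}(\alpha)$ is a nontrivial stability statement that Sherman--Morrison alone does not deliver. More seriously, your step identifying $\tr(\hat{\bm\Sigma}_{-i}(\alpha)^{-1}\Sp)$ with $n/r$ presupposes that $\hat{\bm\Sigma}(\alpha)$ already resembles its deterministic equivalent, which is the conclusion you are trying to prove. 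Proposition~\ref{prop:r_finite} does not break this circularity: the quantity $Q(r)$ there is built from the \emph{sample covariance} resolvent $\big(\tfrac1n\sum_j\bm\xi_j\bm\xi_j^T+\alpha\tfrac{n}{pr}\Sp^{-1}\big)^{-1}$, not from $\hat{\bm\Sigma}(\alpha)^{-1}$.

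The paper breaks the circularity by reversing the order of operations. It encodes the weight vector as the unique root $n\bm w$ of a nonlinear map $g:\R^n\to\R^n$ (Eq.~\eqref{eq:gee}) and evaluates $g$ at the \emph{deterministic} candidate $\bm u=r\bm 1$. At that point the matrix appearing in $g(\bm u)_i$ is exactly a sample-covariance resolvent, so Sherman--Morrison cleanly produces a leave-one-out version independent of $\bm\xi_i$, and Hanson--Wright plus trace concentration give $\|g(\bm u)\|_\infty$ small (Lemma~\ref{lem1}); this is where the calibration $\E[Q(r)]=1/(1+\alpha-p/n)$ is used. The passage from ``$g(\bm u)$ small'' to ``root close to $\bm u$'' is then a Newton--Kantorovich-type lemma (Lemma~\ref{Teng_cor}), which further requires control of $\nabla g$ and its Lipschitz behaviour near $\bm u$ (Lemmas~\ref{lem2} and~\ref{lem3}). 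This root-finding machinery, evaluated at a deterministic point rather than at the random solution, is the missing idea in your plan.
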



\subsection{Proof of Theorem~\ref{regthm2}}
\label{overview_pf2}

By Lemma~\ref{lem:scaling}, we may assume
$\bm x_i\overset{iid}\sim N(\bm0,\bm S_p)$. Following the argument in Section~\ref{BLlemma}, 
combining Lemma \ref{bl} with the fact that by Proposition \ref{prop:r_finite}, $r<r_{\max}$, it suffices to show that
\bea
\left\|\left(\hat{\bm{\Sigma}}(\alpha)-\frac{\alpha}{1+\alpha} \bm{I}\right) -
\frac pn\frac r{1+\alpha}\bm{\hat S}\right\|_{\max} =
\mathcal{O}_P\left(\sqrt{\log p  / n}\right).
    \label{eq:star}
\eea

Our proof proceeds as follows: First, we express
$\hat{\bm\Sigma}(\alpha)$ as the sum of $\frac{\alpha}{1+\alpha}{\bm I}$
and weighted $\bm x_i \bm x_i^T$ terms, where the weights
are the root of some equation. Next, we show that this
root is concentrated near the vector $ r{\bm1}/n$, with $r$ specified in Proposition~\ref{prop:r_finite}. Finally, we establish Eq.~\eqref{eq:star}.

Following the definition of the regularized TME, we write
$\hat{\bm \Sigma}(\alpha
)$ as
\bea
\hat{\bm \Sigma}(\alpha)= \frac{1}{1+\alpha}\frac pn\sum_{i=1}^n w_i \bm
x_i\bm x_i^T + \frac{\alpha}{1+\alpha}\bm I,  \label{tw9}
\eea
where
the weight vector $\bm w=(w_1,\dots,w_n)^T$ satisfies
\bea
w_i = \frac{1}{\bm x_i^T \hat{\bm \Sigma}(\alpha)^{-1}\bm x_i} = \frac{1}{\bm x_i^T \left(\frac1{1+\alpha}\frac pn\sum_{j=1}^n w_j\bm x_j
        \bm x_j^T  + \frac{\alpha}{1+\alpha}\bm I\right)^{-1}\bm x_i}.
        \label{weightsinw}
\eea
By \citet{sun}[Theorem 11],
$\hat{\bm\Sigma}(\alpha)$ is unique.

Next, consider the function
$g:\R^n\to\R^n$ whose \(n\) components  are
\bea
g(\bm v)_i = v_i-\frac{1}{\bm x_i^T\left(\frac{1}{1+\alpha}\frac pn
\sum_{k=1}^n v_k \bm x_k\bm x_k^T + \frac{\alpha}{1+\alpha}n \bm
I\right)^{-1}\bm x_i}.
    \label{eq:gee}
\eea
Comparing Eq.~\eqref{eq:gee} to Eq.~\eqref{weightsinw}, the  \(n\) non-linear equations $g(\bm v)=\bm 0$ have a unique solution, which is thus $n\bm w$.
The next three lemmas state  properties of \(g\) used to prove that as \(p,n\to\infty\), with $p/n\to\gamma$, this root
concentrates around
$\bm u = r\bm1$, with \(r\) given in Proposition~\ref{prop:r_finite}.
The lemmas, proven in Appendices~\ref{pflem1}--\ref{pflem3}, assume the setting of Theorem~\ref{regthm2}, and their generic constants depend only on \(\gamma,\alpha\) and \(s_{\max}\). Our analysis of the weights \(w_i\) follows the pioneering works of \citet{couillet2014robust,couillet2015random}, who proved that
the weights in Maronna's M-estimators converge to suitable constants, and \cite{MP}, who derived concentration results for the weights of Tyler's M-estimator as $p,n\to\infty$ with $p/n\to\gamma < 1$. 

\begin{lem}\label{lem1}
There exist $C,c>0$ such that for any $\epsilon\in(0,1)$
\bea
{\rm{Pr}}\left(\left\| g(\bm u) \right\|_\infty > \epsilon
\right)< Cpe^{-cp\epsilon^2}.
    \nonumber \label{eq:lem1}
\eea
\end{lem}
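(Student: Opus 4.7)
The plan is to rewrite $g(\bm u)_i$ explicitly so that its distance from zero becomes a concentration statement about the single scalar $A_i := \x_i^T \bm M_{(i)}^{-1}\x_i$, where
\begin{equation*}
\bm M_{(i)} \;=\; \frac{r}{1+\alpha}\frac{p}{n}\sum_{k\ne i}\x_k\x_k^T + \frac{\alpha n}{1+\alpha}\bI
\end{equation*}
is the leave-one-out analogue of the matrix inside the denominator of $g(\bm u)_i$. The Sherman--Morrison identity gives $\frac{1}{\x_i^T \bm M^{-1}\x_i} = \frac{1}{A_i} + \frac{rp}{(1+\alpha)n}$, hence
\begin{equation*}
g(\bm u)_i \;=\; \frac{r(1+\alpha-p/n)}{1+\alpha} - \frac{1}{A_i}.
\end{equation*}
By Lemma~\ref{lem:scaling} I may take $\x_i = \Sp^{1/2}\bm \xi_i$ with $\bm \xi_i\stackrel{iid}\sim N(\bm 0,\bI)$, and a direct computation shows $\tilde{\bm M}_{(i)} := \Sp^{-1/2}\bm M_{(i)}\Sp^{-1/2} = \frac{rp}{1+\alpha}\bm \Phi$, where $\bm \Phi$ is exactly the matrix appearing in Proposition~\ref{prop:r_finite}. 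Consequently $A_i = \bm \xi_i^T \tilde{\bm M}_{(i)}^{-1}\bm \xi_i = \frac{1+\alpha}{r} Q(r)$, and Proposition~\ref{prop:r_finite} gives $\E[A_i] = \frac{1+\alpha}{r(1+\alpha-p/n)}$, which is precisely the value of $A_i$ at which $g(\bm u)_i$ vanishes. Since $r\in[r_{\min},r_{\max}]$ is bounded above and below by constants depending only on $\gamma,\alpha,s_{\max}$, so is $\E[A_i]$, and the identity $g(\bm u)_i = (A_i - \E[A_i])/(A_i\E[A_i])$ reduces the problem to proving $\Pr(|A_i-\E[A_i]|>c\epsilon)\le Ce^{-c'n\epsilon^2}$ uniformly in $i$.

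I would establish this by splitting $|A_i - \E[A_i]| \le |A_i - \tr(\tilde{\bm M}_{(i)}^{-1})| + |\tr(\tilde{\bm M}_{(i)}^{-1}) - \E[\tr(\tilde{\bm M}_{(i)}^{-1})]|$ (noting $\E[A_i] = \E[\tr(\tilde{\bm M}_{(i)}^{-1})]$). The regularization $\frac{\alpha n}{1+\alpha}\Sp^{-1}$ inside $\tilde{\bm M}_{(i)}$ yields the uniform deterministic bounds
\begin{equation*}
\|\tilde{\bm M}_{(i)}^{-1}\|\le \frac{(1+\alpha)s_{\max}}{\alpha n},\qquad \|\tilde{\bm M}_{(i)}^{-1}\|_F^2 \le \frac{(1+\alpha)^2 s_{\max}^2 p}{\alpha^2 n^2}.
\end{equation*}
Since $\bm \xi_i$ is independent of $\tilde{\bm M}_{(i)}$, Lemma~\ref{Rudelson} applies conditionally, and both arms of its minimum are $\gtrsim n\epsilon^2$ for $\epsilon\le 1$ under $p/n\to\gamma$, yielding $\Pr(|A_i-\tr(\tilde{\bm M}_{(i)}^{-1})|>\epsilon/2)\le 2e^{-cn\epsilon^2}$. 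For the trace term, one more application of Sherman--Morrison shows that dropping a single $\bm \xi_k$ from $\tilde{\bm M}_{(i)}$ (to obtain $\tilde{\bm M}_{(i,k)}$) changes the trace by
\begin{equation*}
-\frac{\bm \xi_k^T \tilde{\bm M}_{(i,k)}^{-2}\bm \xi_k}{(1+\alpha)n/(rp) + \bm \xi_k^T \tilde{\bm M}_{(i,k)}^{-1}\bm \xi_k},
\end{equation*}
whose absolute value is bounded by $\|\tilde{\bm M}_{(i,k)}^{-1}\|\le (1+\alpha)s_{\max}/(\alpha n)$: a \emph{deterministic} $O(1/n)$ bound, independent of all $\bm \xi_k$. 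McDiarmid's inequality (equivalently, Azuma--Hoeffding on the leave-one-out martingale) then gives $\Pr(|\tr(\tilde{\bm M}_{(i)}^{-1}) - \E[\tr(\tilde{\bm M}_{(i)}^{-1})]|>\epsilon/2)\le 2e^{-cn\epsilon^2}$.

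Combining the two estimates and taking a union bound over $i=1,\dots,n$ gives $\Pr(\|g(\bm u)\|_\infty>\epsilon)\le Cne^{-cn\epsilon^2}$, which is of the required form $Cpe^{-cp\epsilon^2}$ since $p\asymp n$. The main obstacle is securing the deterministic $O(1/n)$ increment in the bounded-difference step: a naive Gaussian-Lipschitz argument for the trace would produce a Lipschitz constant growing with $\|\bm \xi_k\|$ and only yield a high-probability-Lipschitz bound, which is weaker and awkward to combine with the conditional Hanson--Wright estimate. The additive regularizer $\frac{\alpha n}{1+\alpha}\Sp^{-1}$ in $\tilde{\bm M}_{(i)}$ is exactly what makes this increment uniformly bounded in the data, and is the technical reason the regularized case can be handled cleanly for every $\gamma\in(0,\infty)$.
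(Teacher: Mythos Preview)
Your proof is correct and follows the same overall architecture as the paper's: a Sherman--Morrison reduction to the leave-one-out quadratic form $Q_i$ (your $A_i$ is just $\tfrac{1+\alpha}{r}Q_i$), a Hanson--Wright step conditioning on $\tilde{\bm M}_{(i)}$, and then concentration of $\tr(\tilde{\bm M}_{(i)}^{-1})$ about its mean. The formula $g(\bm u)_i=(A_i-\E A_i)/(A_i\,\E A_i)$ is equivalent to the paper's Eq.~\eqref{eq:Qr_poly}, and the handling of the random denominator $A_i$ is done in the paper exactly as you implicitly need, via the splitting device of Eq.~\eqref{eq:ABc_lambda}; you should state this step explicitly.

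The one genuine difference is in the trace-concentration step. The paper appeals to the Guionnet--Zeitouni concentration inequality for Lipschitz spectral functionals (their Eq.~\eqref{eq:concentration_tr_h}), applied to $h(x)=\tfrac{n}{px}$ on the eigenvalues of $\tfrac{1}{K}\bm Y\bm Y^T$, obtaining a bound of order $e^{-cp^2\delta^2}$. You instead use McDiarmid's bounded-differences inequality, observing that removing and re-inserting a single $\bm\xi_k$ changes $\tr(\tilde{\bm M}_{(i)}^{-1})$ by at most $\|\tilde{\bm M}_{(i,k)}^{-1}\|\le\tfrac{(1+\alpha)s_{\max}}{\alpha n}$ \emph{deterministically}; this yields $e^{-cn\epsilon^2}$, which matches the Hanson--Wright term and is all that is needed. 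Your route is more elementary---it avoids the Guionnet--Zeitouni machinery altogether---and it makes explicit that the uniform $O(1/n)$ increment comes entirely from the regularizer $\tfrac{\alpha n}{1+\alpha}\Sp^{-1}$, which is precisely why the argument works for every $\gamma\in(0,\infty)$. The paper's route, on the other hand, gives a sharper exponent on the trace term and would generalize more readily if one wanted to weaken the regularization.
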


\begin{lem}\label{lem2}
There exist  $c',c_{L},C,c>0$ such that
\bea
\Pr \left(\exists{\bm v}\in B_{c'}(\bm u), \left\|\nabla g\left(\bm v\right) -\nabla g\left(\bm
u\right) \right\|_{\max}> c_{L}\|\bm v - \bm u\|_{\infty} \right) < C p^{2} e^{-cp}.
    \nonumber
\eea
\end{lem}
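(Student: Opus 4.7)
The plan is to bound the Hessian of $g$ uniformly on $B_{c'}(\bm u)$ with high probability, and then apply the mean value inequality coordinatewise. Denote $M(\bm v)=\frac{p}{(1+\alpha)n}\sum_{k}v_{k}\bm x_{k}\bm x_{k}^{T}+\frac{\alpha n}{1+\alpha}\bm I$ and the bilinear forms $h_{ij}(\bm v)=\bm x_{i}^{T}M(\bm v)^{-1}\bm x_{j}$. Direct matrix calculus, using the identity $\partial_{k}M^{-1}=-\frac{p}{(1+\alpha)n}M^{-1}\bm x_{k}\bm x_{k}^{T}M^{-1}$, gives the explicit formulas
\begin{equation}
\partial_{j}g_{i}(\bm v)=\delta_{ij}-\tfrac{p}{(1+\alpha)n}\,\tfrac{h_{ij}^{2}}{h_{ii}^{2}},\qquad \partial_{k}h_{ij}=-\tfrac{p}{(1+\alpha)n}h_{ik}h_{jk},\nonumber
\end{equation}
and consequently
\begin{equation}
\partial_{k}\partial_{j}g_{i}(\bm v) = 2\Big(\tfrac{p}{(1+\alpha)n}\Big)^{\!2}\Big[\tfrac{h_{ij}h_{ik}h_{jk}}{h_{ii}^{2}}-\tfrac{h_{ij}^{2}h_{ik}^{2}}{h_{ii}^{3}}\Big].\nonumber
\end{equation}

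The key step is to control the quantities $|h_{ij}|$, $h_{ii}$ from below, and $\sum_{k}h_{ik}^{2}$ uniformly on a small enough ball. Choosing $c'<r_{\min}/2$ (with $r_{\min}$ from Proposition~\ref{prop:r_finite}) ensures that for every $\bm v\in B_{c'}(\bm u)$ all $v_{k}>0$, hence $M(\bm v)\succeq\tfrac{\alpha n}{1+\alpha}\bm I$ and $\|M(\bm v)^{-1}\|\leq(1+\alpha)/(\alpha n)$ \emph{deterministically} on that ball. By Lemma~\ref{lem:scaling} we may take $\bm x_{i}\sim N(\bm 0,\Sp)$; Lemma~\ref{lem:davidson_bound} applied to $\Sp^{-1/2}\bm X$ yields $\|\bm X\bm X^{T}\|\le s_{\max}n(1+\sqrt{p/n}+t)^{2}$ with probability $\ge 1-e^{-cn}$, and standard chi-squared concentration yields $\tfrac{p}{2}\le\|\bm x_{i}\|^{2}\le 2p$ uniformly in $i$ outside a set of probability $\le Cne^{-cp}$. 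Together these give
\begin{equation}
|h_{ij}(\bm v)|\le\|M^{-1}\|\,\|\bm x_{i}\|\|\bm x_{j}\|\le C_{1},\qquad \sum_{k}h_{ik}^{2}(\bm v)\le\|M^{-1}\|^{2}\,\|\bm X\bm X^{T}\|\,\|\bm x_{i}\|^{2}\le C_{3},\nonumber
\end{equation}
and $h_{ii}(\bm v)\ge\|\bm x_{i}\|^{2}/\|M(\bm v)\|\ge c_{2}>0$, all holding uniformly on $B_{c'}(\bm u)$ off an event of probability $\le Cpe^{-cp}$.

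Plugging these three bounds into the Hessian formula and applying Cauchy--Schwarz $\sum_{k}|h_{ij}h_{ik}h_{jk}|\le|h_{ij}|\sqrt{\sum_{k}h_{ik}^{2}}\sqrt{\sum_{k}h_{jk}^{2}}$, together with $\sum_{k}h_{ij}^{2}h_{ik}^{2}\le h_{ij}^{2}\sum_{k}h_{ik}^{2}$, yields a deterministic constant $c_{L}>0$ such that
\begin{equation}
\sup_{\bm v'\in B_{c'}(\bm u)}\sum_{k=1}^{n}|\partial_{k}\partial_{j}g_{i}(\bm v')|\le c_{L}\nonumber
\end{equation}
for all $i,j$, with probability $\ge 1-Cpe^{-cp}\ge 1-Cp^{2}e^{-cp}$. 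By the fundamental theorem of calculus,
\begin{equation}
|\partial_{j}g_{i}(\bm v)-\partial_{j}g_{i}(\bm u)|\le\int_{0}^{1}\sum_{k}|v_{k}-u_{k}|\,|\partial_{k}\partial_{j}g_{i}(\bm u+t(\bm v-\bm u))|\,dt\le c_{L}\|\bm v-\bm u\|_{\infty},\nonumber
\end{equation}
which is exactly the desired bound.

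The main obstacle I expect is the uniform-in-$\bm v$ control over $B_{c'}(\bm u)$ of the $h_{ij}(\bm v)$ and of $\sum_{k}h_{ik}^{2}(\bm v)$: a naive pointwise use of Lemma~\ref{Rudelson} followed by a union bound over an $\epsilon$-net of $B_{c'}(\bm u)\subset\R^{n}$ would give a net of exponentially many points in $n$, which competes badly with the $e^{-cp}$ tail. The trick bypassing this, spelled out above, is to avoid concentration entirely at this step by exploiting the \emph{deterministic} lower bound $M(\bm v)\succeq\tfrac{\alpha n}{1+\alpha}\bm I$ valid on all of $B_{c'}(\bm u)$, so that only the fixed data-dependent quantities $\|\bm X\bm X^{T}\|$ and $\{\|\bm x_{i}\|^{2}\}_{i=1}^{n}$ need to be controlled with high probability.
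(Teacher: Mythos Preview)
Your argument is correct and takes a genuinely different route from the paper's. The paper does not compute the Hessian of $g$; instead it isolates in a separate lemma the operator-norm Lipschitz estimate $\|\bm F(\bm u)-\bm F(\bm v)\|\le C\|\bm v-\bm u\|_\infty$ via a resolvent perturbation inequality (Stewart), and then propagates this through the explicit formula $\partial_j g_i=\delta_{ij}-\tfrac{p}{(1+\alpha)n}\bm F_{ij}^2/\bm F_{ii}^2$ by an algebraic splitting $|a^2/b^2-c^2/d^2|\le q_1+q_2$. You instead differentiate once more, obtain the closed-form Hessian, and bound $\sum_k|\partial_k\partial_j g_i|$ directly, which pairs naturally with $\|\bm v-\bm u\|_\infty$ through the fundamental theorem of calculus. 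Both proofs ultimately rest on the same three inputs you identified: the deterministic bound $M(\bm v)\succeq\tfrac{\alpha n}{1+\alpha}\bm I$ on $B_{c'}(\bm u)$ (your key observation that no $\epsilon$-net is needed is exactly what the paper exploits too), and the high-probability controls on $\|\bm X\bm X^T\|$ and on $\|\x_i\|^2$ (the latter via Hanson--Wright, since $\|\x_i\|^2=\bm\xi_i^T\Sp\bm\xi_i$ is a quadratic form rather than a pure $\chi^2$). The paper's modular approach buys a reusable Lipschitz lemma for $\bm F$; your approach is more self-contained, avoids the perturbation inequality, and makes the appearance of the constant $c_L$ slightly more transparent through the explicit bound $\sum_k h_{ik}^2\le\|M^{-1}\|^2\|\bm X\bm X^T\|\|\x_i\|^2=O(1)$.
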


\begin{lem}\label{lem3}
There exist $c_{H},C,c>0$ such that
\bea
{\rm{Pr}}\left(\left\|  \left( \nabla g\left(\bm u\right) \right)^{-1}
\right\|_\infty > c_{H}  \right) < Cpe^{-cp}.
    \label{eq:lem3}
\eea
\end{lem}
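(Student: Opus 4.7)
The plan is to show that $H := \nabla g(\bm u)$ is strictly diagonally dominant in the $\ell_\infty$ sense with probability at least $1-Cpe^{-cp}$, and then invoke the standard bound for strictly diagonally dominant matrices (Varah's bound) to conclude that $\|H^{-1}\|_\infty$ is uniformly bounded by an explicit constant $c_H=c_H(\gamma,\alpha,s_{\max})$.

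First, direct differentiation of \eqref{eq:gee} via $\partial M^{-1}=-M^{-1}(\partial M)M^{-1}$ yields, at $\bm v=\bm u=r\bm 1$,
\[
H_{ij}=\delta_{ij}-\beta\,\frac{m_{ij}^{2}}{m_{ii}^{2}},\qquad \beta:=\frac{p}{(1+\alpha)n},\qquad m_{ij}:=\bm x_{i}^{T} M(\bm u)^{-1}\bm x_{j},
\]
where $M(\bm u)=\frac{rp}{(1+\alpha)n}\sum_{k}\bm x_{k}\bm x_{k}^{T}+\frac{\alpha n}{1+\alpha}\bI$. The hypothesis on $\alpha$ forces $\beta<1$, so $H_{ii}=1-\beta>0$. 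I then decompose $M(\bm u)=M_{-i}+a\,\bm x_{i}\bm x_{i}^{T}$ with $a=r\beta$ and $M_{-i}$ independent of $\bm x_{i}$, and apply Sherman--Morrison. Setting $c_{i}:=\bm x_{i}^{T} M_{-i}^{-1}\bm x_{i}$ and $d_{i}:=\bm x_{i}^{T} M_{-i}^{-2}\bm x_{i}$ gives $m_{ii}=c_{i}/(1+ac_{i})$; substituting $\sum_{j\neq i}\bm x_{j}\bm x_{j}^{T}=a^{-1}\!\left(M_{-i}-\tfrac{\alpha n}{1+\alpha}\bI\right)$ inside $\sum_{j\neq i}(\bm x_{i}^{T} M_{-i}^{-1}\bm x_{j})^{2}$ then yields the clean identity
\[
\sum_{j\neq i}|H_{ij}|\;=\;\beta\sum_{j\neq i}\frac{m_{ij}^{2}}{m_{ii}^{2}}\;=\;\frac{1}{rc_{i}}-\frac{\alpha n d_{i}}{(1+\alpha)\,r\,c_{i}^{2}}.
\]

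Next, by Lemma~\ref{lem1} the scalar $g(\bm u)_{i}=r-1/m_{ii}$ is uniformly $O(\epsilon)$ with high probability, so $m_{ii}$ lies within $O(\epsilon)$ of $1/r$, and hence $c_{i}=m_{ii}/(1-am_{ii})$ lies within $O(\epsilon)$ of $1/[r(1-\beta)]$. Substituting gives $1/(rc_{i})=1-\beta+O(\epsilon)$, so
\[
H_{ii}-\sum_{j\neq i}|H_{ij}|\;=\;\frac{\alpha n d_{i}}{(1+\alpha)\,r\,c_{i}^{2}}+O(\epsilon),
\]
and strict diagonal dominance with a positive margin will follow as soon as $nd_{i}$ admits a uniform positive lower bound. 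For that bound the regularization is essential: $M_{-i}\succeq\frac{\alpha n}{1+\alpha}\bI$, and by Lemma~\ref{lem:davidson_bound} applied to $\Sp^{-1/2}\bm x_{j}\sim N(\bm 0,\bI)$, the spectral norm of the leave-one-out sample covariance is at most $s_{\max}(1+\sqrt{p/n})^{2}$ up to $o(1)$ w.h.p., so $\|M_{-i}\|\leq Kn$ for a constant $K=K(\gamma,\alpha,s_{\max})$. Hence $M_{-i}^{-2}\succeq(Kn)^{-2}\bI$, and Lemma~\ref{Rudelson} (with $A=\Sp$) gives $\|\bm x_{i}\|^{2}\geq p/2$ w.h.p. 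Combining these yields $nd_{i}\geq p/(2K^{2}n)\to\gamma/(2K^{2})>0$. A union bound over $i=1,\dots,n$ of the concentration failures shows that $H_{ii}-\sum_{j\neq i}|H_{ij}|\geq\kappa>0$ uniformly in $i$ with probability $\geq 1-Cne^{-cp}$, and Varah's inequality then delivers $\|H^{-1}\|_\infty\leq 1/\kappa$.

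The main obstacle is the exact cancellation at leading order: the row sum $\sum_{j\neq i}|H_{ij}|$ matches $H_{ii}$ up to $O(\epsilon)$, so diagonal dominance survives only thanks to the strictly positive next-order correction $\propto \alpha nd_{i}$, which in turn relies on the regularization floor $\tfrac{\alpha n}{1+\alpha}\bI$ inside $M_{-i}$ keeping $\|M_{-i}\|=\Theta(n)$ and thus $nd_{i}=\Theta(1)$. A less careful argument would give only $\|H^{-1}\|_\infty=O(1/\epsilon)$ and fail to produce a uniform constant.
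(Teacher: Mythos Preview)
Your proof is correct and follows essentially the same route as the paper: compute the off-diagonal row sums of $\nabla g(\bm u)$ via Sherman--Morrison, show they fall strictly below the diagonal entry $1-\beta$ by a margin supplied by the regularization, and conclude a uniform bound on $\|(\nabla g(\bm u))^{-1}\|_\infty$. The paper packages this as a Neumann-series estimate (equivalent to your Varah bound) and extracts the gap by bounding the ratio $R/Q_i\le d_1/(d_1+\beta_{\text{paper}})<1$, whereas you isolate the same gap as the explicit positive correction $\alpha n d_i/((1+\alpha)rc_i^2)$ and lower-bound it via $\|M_{-i}\|\le Kn$; these are two bookkeepings of the identical mechanism, and your appeal to Lemma~\ref{lem1} for the concentration of $m_{ii}$ is equivalent to the paper's direct use of the concentration of $Q_i$.
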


Lemmas~\ref{lem1} and~\ref{lem2} show that w.h.p.~$g(\bm u)$ is small and  $\nabla g$ is Lipschitz near $\bm u$. These two properties are consistent with the root of \(g\) being close to $\bm u$.
To rigorously prove this, following \cite{MP}, we consider the function
$f(\bm v) = \left(\nabla g(\bm
u)\right)^{-1}g(\bm v)$. Lemma~\ref{lem3} shows that the matrix  $(\nabla g(\bm u))^{-1}$ is
w.h.p.~not extremely large. Finally, the following lemma
combines these properties of $g$ to infer that its root is close to \(\bm u\).
\begin{lem}\label{Teng_cor}
Let $f:\R^n\to\R^n$, $\bm u\in\R^n$  and $C>0$. Assume that
\begin{enumerate}
    \item $\nabla f(\bm u)=\bm I$;
    \item
        $\|\nabla f(\bm v) - \nabla f(\bm u) \|_{\max}  \leq C\|\bm v-\bm u\|_\infty$ for all
        $\|\bm v-\bm u\|_\infty \le 3\|f(\bm u)\|_\infty$;
    \item $\|f(\bm u)\|_\infty<\min\{1/(9C),1/3\}$.
\end{enumerate}
Then there exists a   $\bm{\tilde v}\in\mathbb{R}^n$ such that $f(\bm{\tilde v})=\bm 0$ and $\|\bm{\tilde v} - \bm u\|_\infty < 3\|f(\bm u)\|_\infty$.
\end{lem}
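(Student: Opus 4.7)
The natural route is a Banach fixed-point argument applied to the iteration map $T(\bm v) := \bm v - f(\bm v)$, whose fixed points are precisely the zeros of $f$. Because $\nabla T(\bm u) = \bm I - \nabla f(\bm u) = \bm 0$ by assumption~1, $T$ is first-order stationary at $\bm u$, which is the structural reason to expect it to be a strict contraction on a small $\ell^\infty$-ball around $\bm u$. I will work on the closed ball $B := \{\bm v \in \R^n : \|\bm v - \bm u\|_\infty \leq 3\|f(\bm u)\|_\infty\}$ and verify two properties: (i) $T(B) \subset B$ (in fact, into a ball of half the radius) and (ii) $T|_B$ is a strict contraction. Banach's theorem then produces a unique fixed point $\bm{\tilde v} \in B$, which is the desired zero of $f$, and (i) upgrades the distance bound to the strict inequality in the conclusion.

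\textbf{Self-mapping and contraction.} Both (i) and (ii) reduce to the first-order Taylor identity. Using $\nabla f(\bm u) = \bm I$,
\[
T(\bm v) - \bm u = -f(\bm u) - \int_0^1 \big[\nabla f(\bm u + s(\bm v - \bm u)) - \nabla f(\bm u)\big](\bm v - \bm u)\,ds,
\]
so for $\bm v \in B$ the segment of integration lies in $B$ and assumption~2 yields
\[
\|T(\bm v) - \bm u\|_\infty \leq \|f(\bm u)\|_\infty + \tfrac{C}{2}\|\bm v - \bm u\|_\infty^2 \leq \|f(\bm u)\|_\infty\Big(1 + \tfrac{9C}{2}\|f(\bm u)\|_\infty\Big).
\]
Assumption~3 forces $9C\|f(\bm u)\|_\infty < 1$, so the bracket is strictly less than $3/2$, giving $\|T(\bm v) - \bm u\|_\infty < \tfrac{3}{2}\|f(\bm u)\|_\infty < 3\|f(\bm u)\|_\infty$. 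For (ii), the analogous identity
\[
T(\bm v) - T(\bm w) = \int_0^1 \big[\nabla f(\bm u) - \nabla f(\bm w + s(\bm v - \bm w))\big](\bm v - \bm w)\,ds,
\]
combined with assumption~2 along the segment (also in $B$), yields $\|T(\bm v) - T(\bm w)\|_\infty \leq 3C\|f(\bm u)\|_\infty \|\bm v - \bm w\|_\infty$, with Lipschitz constant $<1/3$ by assumption~3. The Banach fixed-point theorem then completes the proof.

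\textbf{Main obstacle.} The delicate step is converting the entrywise $\max$-norm Lipschitz bound on $\nabla f$ (assumption~2) into the $\ell^\infty$-norm bound on $\big[\nabla f(\bm v) - \nabla f(\bm u)\big](\bm v - \bm u)$ used in both displays above; the naive inequality $\|A\bm x\|_\infty \leq n\|A\|_{\max}\|\bm x\|_\infty$ would lose a factor of $n$ and defeat the argument. The intended reading is that assumption~2 is to be applied in its operator-$\infty$ form, which is consistent with the operator-$\infty$-norm bound on $(\nabla g(\bm u))^{-1}$ supplied by Lemma~\ref{lem3}; equivalently, the explicit row-wise structure of $\nabla g$ obtained by differentiating Eq.~(\ref{eq:gee}) provides the $\ell^1$ row-summability needed to upgrade the entrywise Lipschitz bound to an operator-$\infty$ Lipschitz bound with a constant of the same order.
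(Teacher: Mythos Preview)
Your Banach fixed-point argument is the standard route and is, in essence, what the paper invokes: the paper does not prove Lemma~\ref{Teng_cor} itself but defers to Lemma~3.1 of \citet{MP}, remarking only that the original proof survives under the weaker ball-radius hypothesis. So on the level of approach there is nothing to compare.

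Your ``Main obstacle'' paragraph, however, is not a minor caveat but the heart of the matter. With $\|\cdot\|_{\max}$ the entrywise maximum (as defined in the paper's notation section), assumption~2 is too weak to drive the contraction estimate, and the lemma as literally stated is in fact false. A counterexample: take $n\ge 10$, $\bm u=\bm 0$, $C=1$, and
\[
f_i(\bm v)=v_i+\frac{1}{2n}\Big(\sum_{j=1}^n v_j\Big)^{2}+\frac{1}{n},\qquad 1\le i\le n.
\]
Then $\nabla f(\bm 0)=\bm I$; every entry of $\nabla f(\bm v)-\bm I$ equals $\tfrac{1}{n}\sum_j v_j$, so $\|\nabla f(\bm v)-\bm I\|_{\max}\le\|\bm v\|_\infty$; and $\|f(\bm 0)\|_\infty=1/n<1/9$. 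Yet any root would have all coordinates equal to some $t$ satisfying $\tfrac{n}{2}t^{2}+t+\tfrac{1}{n}=0$, whose discriminant is $-1$. The intended hypothesis is the operator-$\ell^\infty$ norm $\|\cdot\|_\infty$, under which your proof is correct and complete. Note that the paper's own verification in Section~\ref{overview_pf2} establishes only the $\|\cdot\|_{\max}$ bound on $\nabla f(\bm v)-\nabla f(\bm u)$ via Lemma~\ref{lem2}, so the row-sum upgrade you sketch is not merely cosmetic but is needed to close the argument there as well.
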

Lemma~\ref{Teng_cor} is slightly stronger than  Lemma 3.1 of~\citet{MP}, as it has a weaker  requirement that the Lipschitz condition in Lemma \ref{Teng_cor} holds in a smaller ball $\|\bm v-\bm
u\|_\infty \le 3\|f(\bm u)\|_\infty$, instead of the original requirement $\|\bm v-\bm u\|_\infty<1$ in their Lemma 3.1. A careful inspection shows that their original proof is still valid under this weaker assumption.

To apply Lemma~\ref{Teng_cor} to $f(\bm v) = \left(\nabla g(\bm
u)\right)^{-1}g(\bm v)$, we verify that the three conditions of the lemma  hold with high probability. The
first condition is trivially satisfied. For the other two conditions,
by Lemmas \ref{lem1} and
\ref{lem3},  w.h.p.
\[
\|f(\bm u)\|_\infty \leq \|(\nabla g(\bm u))^{-1}\|_\infty \cdot \|g(\bm u)\|_\infty \leq c_H\epsilon.
\]
Similarly,
by Lemmas~\ref{lem2} and~\ref{lem3},
for all \(\|\bm v-\bm u\|_\infty\leq c'\), w.h.p.
\[
\|\nabla f(\bm v)-\nabla f(\bm u)\|_{\max}
\leq
\|(\nabla g(\bm u))^{-1}\|_\infty \cdot\|\,\nabla g(\bm v)-\nabla g(\bm
u) \|_{\max}
\leq
c_H  c_L \|\bm v-\bm u\|_\infty.
\]
Since for sufficiently small
$\epsilon$, $c_{H}\epsilon<\min\{1/(9c_L c_H),1/3\}$, both the second and third
conditions of Lemma~\ref{Teng_cor} are thus satisfied with constant \(C=c_{L}c_H\).

To conclude, with probability at least $1-Cp^{2}e^{-cp\epsilon^2}$all three conditions
of Lemma~\ref{Teng_cor} hold, so there exists $\tilde{\bm v}\in\R^n$ such that $f(\tilde{\bm
v})=\bm0$ and $\|\tilde{\bm v} - \bm u\|_\infty \le 3\|f(\bm
u)\|_\infty<3c_{H}\epsilon$. Since
$n\bm w$ is the unique root of $g(\bm v)$ and  also of $f(\bm
v)$,
\bea
\Pr\left( \|n\bm w - r\bm1\|_{\infty} > 3c_{H}\epsilon  \right) <
Cp^{2}e^{-cp\epsilon^2}.
    \label{eq:nwr1}
\eea

Next, we use Eq.~(\ref{eq:nwr1}) to bound the LHS of Eq.~\eqref{eq:star}. First,
 by Eq.~\eqref{tw9},
\bea
\left\|\left(\hat{\bm \Sigma}(\alpha) - \tfrac{\alpha}{1+\alpha}\bm I\right) -
\tfrac{1}{1+\alpha}\tfrac pn r \hat{\bm S}\right\|_{\max} &=& 
        \tfrac1{1+\alpha}\tfrac pn\left\|\sum_{i=1}^n w_i \bm
x_i\bm x_i^T -  r\tfrac1n\sum_{i=1}^n  \bm x_i\bm x_i^T \right\|_{\max}\nonumber \\
&\le& \tfrac{1}{1+\alpha}\tfrac pn  \left\|n\bm w - r\bm 1\right\|_\infty \left\|
\tfrac1n\sum_{i=1}^n \bm x_i \bm x_i^T
\right\|_{\max}.
    \nonumber
\eea

Using this inequality and Eq.~(\ref{eq:ABc_lambda}) with $\lambda =1/[s_{\max}(1+2 \sqrt \gamma)^{2}]$,
\begin{multline}
\Pr\left( \left\|(\hat{\bm \Sigma}(\alpha) - \tfrac{\alpha}{1+\alpha}\bm I) -
\tfrac{1}{1+\alpha}\tfrac pn r \hat{\bm S}\right\|  > \epsilon
\right)
\le \Pr\left(   \tfrac{1}{1+\alpha}\tfrac pn  \|n\bm w - r\bm 1\|_\infty \|
\hat{\bm S}  \| > \epsilon \right)
    \nonumber \\
\le \Pr\left(     \|n\bm w - r\bm 1\|_\infty  >
\epsilon \frac{n(1+\alpha)}{p s_{\max}(1+2\sqrt\gamma)^2} \right) \nonumber
+ \Pr\left(
\|\hat{\bm S} \| >
s_{\max}(1+2\sqrt\gamma)^2  \right).
    \nonumber
\end{multline}
Since \(\hat{\bm S}=\Sp^{1/2}(\frac1n\sum_i \bm\xi_i\bm\xi_i^T)\Sp^{1/2}\) with $\bm\xi_i\sim N(0,\bI)$, by Lemma~\ref{lem:davidson_bound} the second term is exponentially small in \(p.\)
By Eq.~\eqref{eq:nwr1}, the first term is bounded by \(C'p^{2}e^{-c p\epsilon^2}\). Hence, Eq.~\eqref{eq:star} holds, which concludes the proof of Theorem \ref{regthm2}.


\subsection{Concluding the proof of Theorem~\ref{regthm}} \label{3from2}

Similar to Theorems \ref{plessn} and \ref{regthm2}, to prove Theorem~\ref{regthm} it
suffices to show that
\be
\left\|p(\hat{\bm{\Sigma}}(\alpha)-\tfrac{\alpha}{1+\alpha}
\bm{I})/\tr(\hat{\bm{\Sigma}}(\alpha)-\tfrac{\alpha}{1+\alpha} \bm{I}) -\hat{\bm S}\right\|_{\max} =
\mathcal{O}_P\left(\sqrt{\log p  / n}\right).
    \label{eq:p_over_trace_2bdd}
\ee
Eq.~\eqref{eq:star} combined with Proposition~\ref{prop:r_finite}
imply that for \(r>r_{\min}>0\)
\be
\left\| \frac np\frac {1+\alpha}r \left(\hat{\bm{\Sigma}}(\alpha)-\frac{\alpha}{1+\alpha} \bm{I}\right) -
\hat{\bm S}\right\|_{\max} =
\mathcal{O}_P\left(\sqrt{\log p  / n}\right).
    \label{star_modified}
\ee
Since $\tr(\hat {\bm S})$ is tightly concentrated around $p$, we may replace $\hat{\bm S}$ in 
Eq. (\ref{star_modified}) by $p\hat{\bm S}/\tr(\hat{\bm S})$.
Now, 
Eq.~\eqref{eq:p_over_trace_2bdd} follows by the following lemma, proven in the appendix, combined
with the fact that w.h.p. $\|\hat{\bm S}\|_{\max} \leq 2 \|\Sp\|_{\max} \leq 2  M$.

\begin{lem} \label{lem:pA_trA} Let \(\bm B  \in S_{+}^p\)
with $\tr(\bm B)=p$ and $\|\bm B\|_{\max}\leq b_{\max}$. Suppose that  \(\bm A \in S_{+}^p \) satisfies $\|\bm A-\bm B\|_{\max}<\epsilon\leq 1/2$. Then,
\begin{equation}
\left\|\frac{p\bm A}{\tr(\bm A)}-\bm B\right\|_{\max}\leq 2(1+b_{\max})\epsilon.
\end{equation}
\end{lem}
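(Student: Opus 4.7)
The plan is a direct decomposition estimate. First I would control the trace of $\bm A$: since $\|\bm A - \bm B\|_{\max}<\epsilon$, in particular the diagonal entries satisfy $|a_{ii}-b_{ii}|<\epsilon$ for every $i$, so summing over $i=1,\dots,p$ and using $\tr(\bm B)=p$ gives $|\tr(\bm A)-p|<p\epsilon$. Together with $\epsilon\leq 1/2$, this yields the two elementary bounds $p/\tr(\bm A)\leq 1/(1-\epsilon)\leq 1+2\epsilon$ and $|p/\tr(\bm A)-1|\leq \epsilon/(1-\epsilon)\leq 2\epsilon$.

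Next I would split the quantity of interest via the identity
\begin{equation}
\frac{p\bm A}{\tr(\bm A)}-\bm B=\frac{p}{\tr(\bm A)}(\bm A-\bm B)+\left(\frac{p}{\tr(\bm A)}-1\right)\bm B. \nonumber
\end{equation}
Applying the triangle inequality in the max-norm and using the two scalar bounds above together with the assumptions $\|\bm A-\bm B\|_{\max}<\epsilon$ and $\|\bm B\|_{\max}\leq b_{\max}$ gives
\begin{equation}
\left\|\frac{p\bm A}{\tr(\bm A)}-\bm B\right\|_{\max} \leq (1+2\epsilon)\epsilon+2\epsilon\, b_{\max}. \nonumber
\end{equation}
Since $\epsilon\leq 1/2$, we have $1+2\epsilon\leq 2$, and the right hand side is at most $2\epsilon+2 b_{\max}\epsilon=2(1+b_{\max})\epsilon$, which is the desired bound.

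There is no real obstacle here; the only thing to be careful about is that the bound on $\tr(\bm A)$ must come from the max-norm control of the diagonal, not from any spectral-norm argument, and that the constraint $\epsilon\leq 1/2$ is used precisely to replace the factors $1/(1-\epsilon)$ by $1+2\epsilon$ so that the final constant is clean.
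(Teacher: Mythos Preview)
Your proof is correct and follows essentially the same strategy as the paper: control $|\tr(\bm A)-p|$ via the diagonal entries, then split the difference into two terms and bound each using $\epsilon\leq 1/2$. The only cosmetic difference is the choice of split: the paper writes $\frac{p\bm A}{\tr(\bm A)}-\bm B=(\bm A-\bm B)+\bigl(\tfrac{p}{\tr(\bm A)}-1\bigr)\bm A$ and therefore bounds $\|\bm A\|_{\max}\leq b_{\max}+1/2$, whereas you pull out $\bm B$ instead and use $\|\bm B\|_{\max}\leq b_{\max}$ directly; both routes give the same constant $2(1+b_{\max})$.
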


\section{Numerical Experiments}\label{simulations}

Focusing on the regularized TME, we present simulations that support our theoretical analysis. Section \ref{num_subs1} compares the regularized TME, the sample covariance and their thresholded versions.
Section \ref{subs3} considers the sensitivity of the proposed estimator to $\alpha$.
Section \ref{sec:TME_outliers} demonstrates a simple modification of our estimator in the presence of outliers.

\subsection{Comparison of thresholded TME with covariance estimators} \label{num_subs1}

We considered the following shape matrix, also used by \cite{CT}:
\bea
\Sp = (s_{ij}) = (.7^{|i-j|}).
    \nonumber
\eea
Note that excluding the diagonal all rows of this matrix
have $\ell_1$ norm bounded by $2/(1-0.7)-2=14/3$. Hence, by the Gershgorin disk theorem, 
for any $p$, this matrix has 
a finite spectral norm, $\|\Sp\|\leq s_{\max}=1 + 14/3 = 17/3$. This is in accordance with
our assumptions in Theorem \ref{regthm}.

We generated data from a Gaussian scale mixture, which is a particular case of Eq. (\ref{data}). Here  $u$ and
$\bm\xi$  are independent, with $\bm \xi\sim N(\bm 0,\bm I)$. We considered three different choices for the random variables \(u_{i}\):\ (i)  $u_i= 1$, so
$\{\bm x_i\}_{i=1}^n$ are i.i.d.~$N(\bm 0,\bm S_p)$; (ii) $u_i \sim Laplace(0,1),$ a heavy tailed distribution with finite moments; and
(iii) $u_i\sim Cauchy(0, 1)$, so the distribution does not even have a well-defined mean or covariance.

We computed four estimators for the shape matrix: (i) SampCov: the sample covariance scaled to have trace \(p\),
$p\hat{\bm S}/\tr(\hat{\bm S})$; (ii) th-SampCov: the thresholded version of SampCov,
$\tau_t ( p \hat{\bm S} /\tr(\hat{\bm S}))$;
(iii) RegTME: the regularized TME, normalized to have trace \(p\),
\[
\frac{p({\bm \Sigma}(\alpha)-\tfrac{\alpha}{1+\alpha}\bm I)}
{\tr\left({\bm \Sigma}(\alpha)-\tfrac{\alpha}{1+\alpha}\bm I\right)
};
\]
and (iv) th-RegTME: the thresholded version of RegTME in  Eq.~(\ref{eq:T_R_TME}).
We choose 
$\alpha = 10$, and threshold at level $t=\sqrt{(\log p)/n}.$ Our stopping rule for  (\ref{eq:iterations_reg_TME})
is $\|p\hat{\bm \Sigma}_{k+1}/\tr(\hat{\bm \Sigma}_{k+1}) - p\hat{\bm \Sigma}_{k}/\tr(\hat{\bm\Sigma}_{k})\|_{F}<10^{-12}$,
or $k=1400$ iterations.

We measured the accuracy of an estimator $\hat{\Sp}$
by the logarithm of its averaged relative error (abbreviated LRE). That is, for 100 different realizations, we independently generated $n$ i.i.d.~samples in $\mathbb{R}^p$, and
each time estimated  $(\hat{\Sp})_{i}$, where $i=1,\ldots,100$. The LRE was then computed as follows
\bea
\operatorname{LRE} = \log\left( \frac1{100}\sum_{i=1}^{100}\frac{\| (\hat{\Sp})_{i}- {\bm S_p}  \|}
 {\|\Sp\|}\right).
    \nonumber
\eea
We considered sample sizes $n\in[100,1000]$ and the following  three ratios  $p/n\in\{.5,1,2\}$. Fig.~\ref{fig:fig1} shows the LRE of the four estimators.
As expected theoretically, for $u_i\equiv 1$   thresholding the sample
covariance or the regularized TME yield similar errors.
In contrast, for heavy-tailed data the thresholded sample covariance performs
poorly, whereas the thresholded
regularized TME is still an accurate estimate of \(\Sp\). Note
that since the regularized TME is invariant to the scaling $u_i$, the resulting errors of the regularized TME and its thresholded version (the blue squares and triangles) are the same for all three distributions of \(u_{i}\).

\begin{figure}[t!]
    \includegraphics[width=1.1\textwidth]{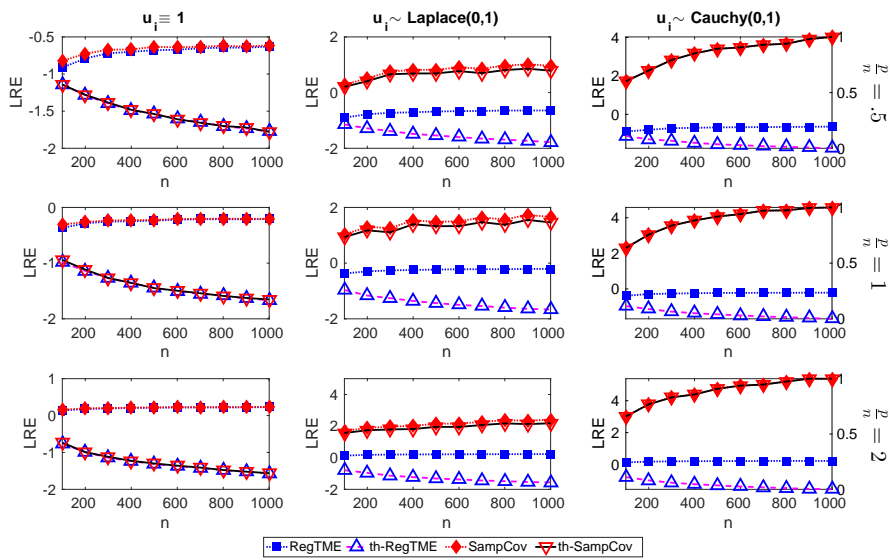}
    \caption{Comparison of the LRE of the four estimators
        with data  i.i.d.~from a Gaussian scale mixture. The rows correspond to \(p/n = 0.5,1,2\).
    The columns correspond to $u_i\equiv 1$, $u_i\overset{iid}\sim Laplace(0,1)$ and $u_i\overset{iid}\sim Cauchy(0, 1)$.
}
\label{fig:fig1}
\end{figure}

\subsection{Sensitivity of Regularized TME to Choice of $\alpha$}\label{subs3}

Next, we study how the error and runtime of th-RegTME depend on the
 regularization parameter
\(\alpha\).
We
consider the  Gaussian model with covariance $\bm S_p$,
and explore the behavior of th-RegTME for the following values of $\alpha$: 0.2, 0.4, 0.6, 0.8, 1, 2, $3 \ldots, 20$ and the following three cases:
$(p,n)=(800,400)$, $(p,n)=(800,200)$ and $(p,n)=(400,200)$.
Even though the regularized TME does not exist if  $\alpha <\max(0,p/n-1)$, our algorithm, with a stopping criterion based on scaled matrices converged for all considered values of $\alpha$. For a similar property upon scaling scatter matrices, see \cite{chen}.
 The left panel of Fig.~\ref{fig:fig5} shows the LRE of th-RegTME as a function of $\alpha$.
The maximal LRE occurs at $p/n-1$ and larger values of $\alpha$ yield slightly smaller
errors, which are nearly identical for all large values of $\alpha$.
This is in accordance with Theorem \ref{regthm}, which states that asymptotically all large values of $\alpha$ yield the same error rate. The right panel of Fig.~\ref{fig:fig5} displays the logarithm of the runtime of th-RegTME as a function of $\alpha$, showing a sharp increase in runtime as $p/n-1$
approaches $\alpha$.

\begin{figure}[t!]
\includegraphics[width=0.48\textwidth]{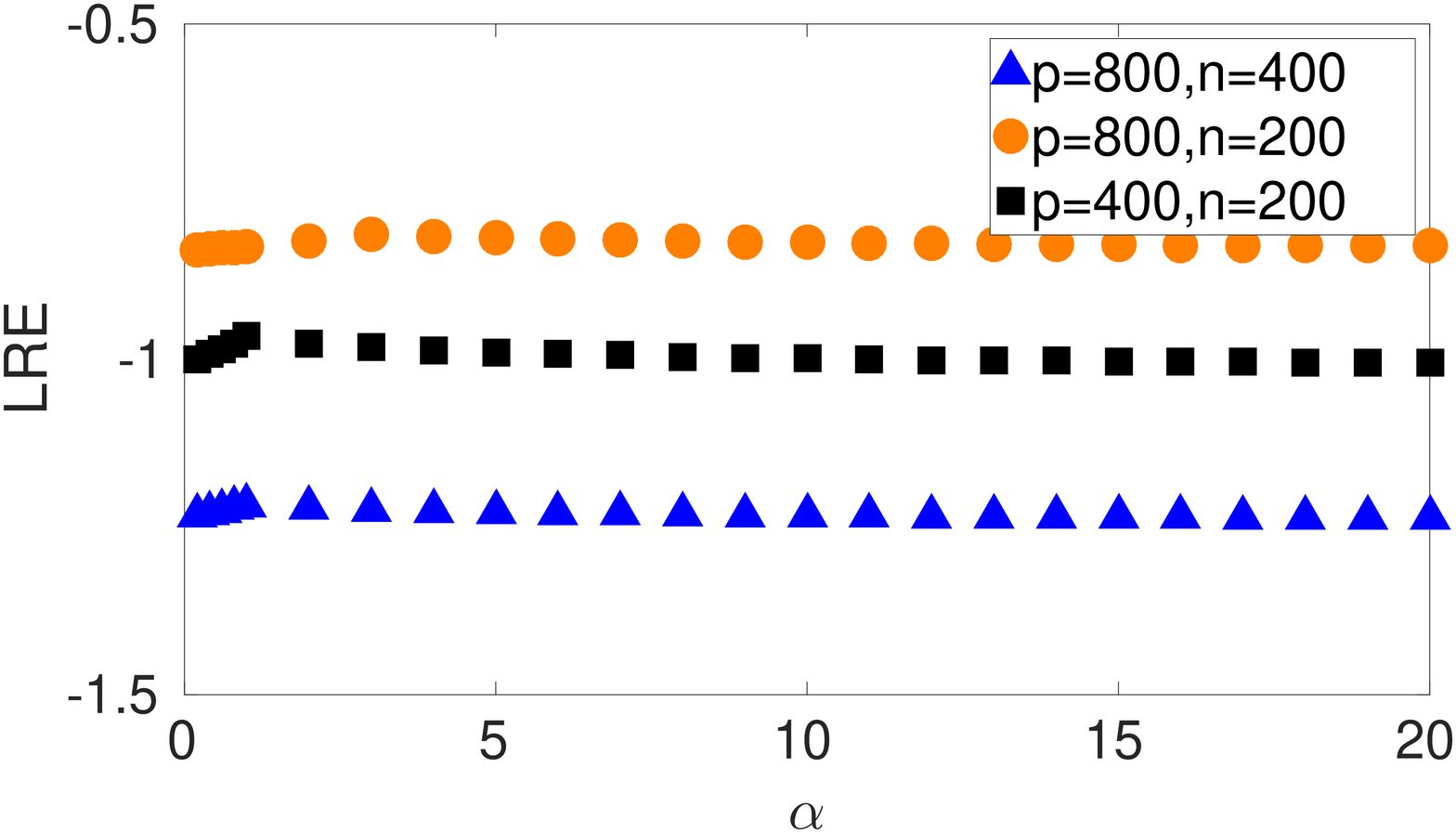}\,
\includegraphics[width=0.48\textwidth]{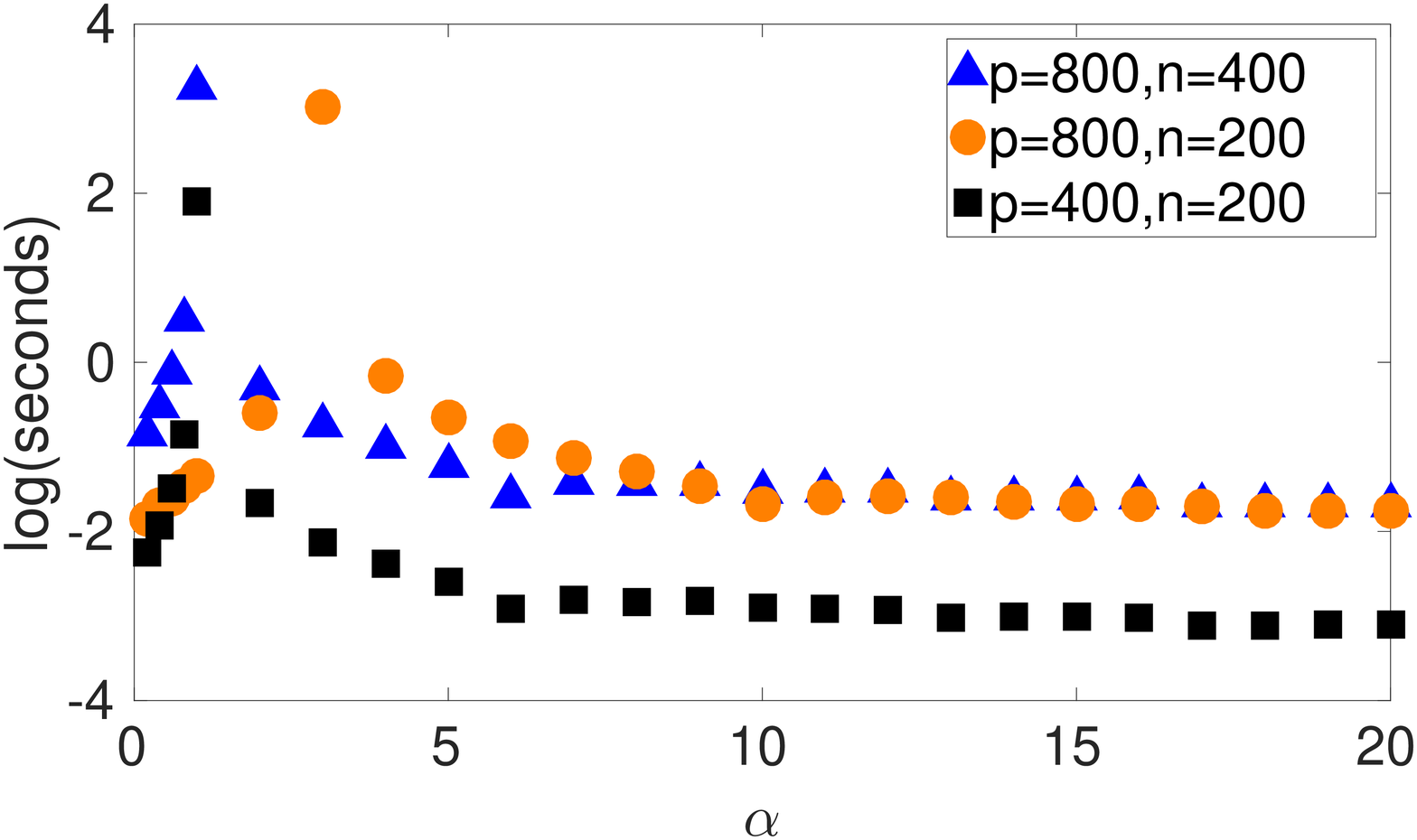}
\caption{LRE and log-runtime of th-RegTME on elliptical data for different choices of $\alpha$ and three choices of $p$ and $n$.}
\label{fig:fig5}
\end{figure}

Next, we explore the behavior of th-RegTME for $p=480$, $\alpha = 1, 2, 3, 4$ and $n = 60, 64, 68, \ldots, 300$.
The left panel of Fig.~\ref{fig:TME_alpha} shows the error of th-RegTME as a function of $n$. Again, in accordance with theory, \(\alpha\) has little effect on the accuracy.
Of particular interest is the runtime, seen in the right panel of
Fig.~\ref{fig:TME_alpha}. Here we see a sharp increase in runtime as $p/n-1$
approaches $\alpha$. For $n \geq \frac p{\alpha+1}$,  the runtime decreases as $\alpha$ increases.

These experiments indicate that one may generally prefer larger $\alpha$,
particularly for faster runtime.  We propose to choose a value of $\alpha$ close to the bound in Lemma \ref{lem:convergence_reg_TME} for some $R\in (0,1)$, which 
guarantees fast convergence.

\begin{figure}[t!]
\includegraphics[width=0.48\textwidth]{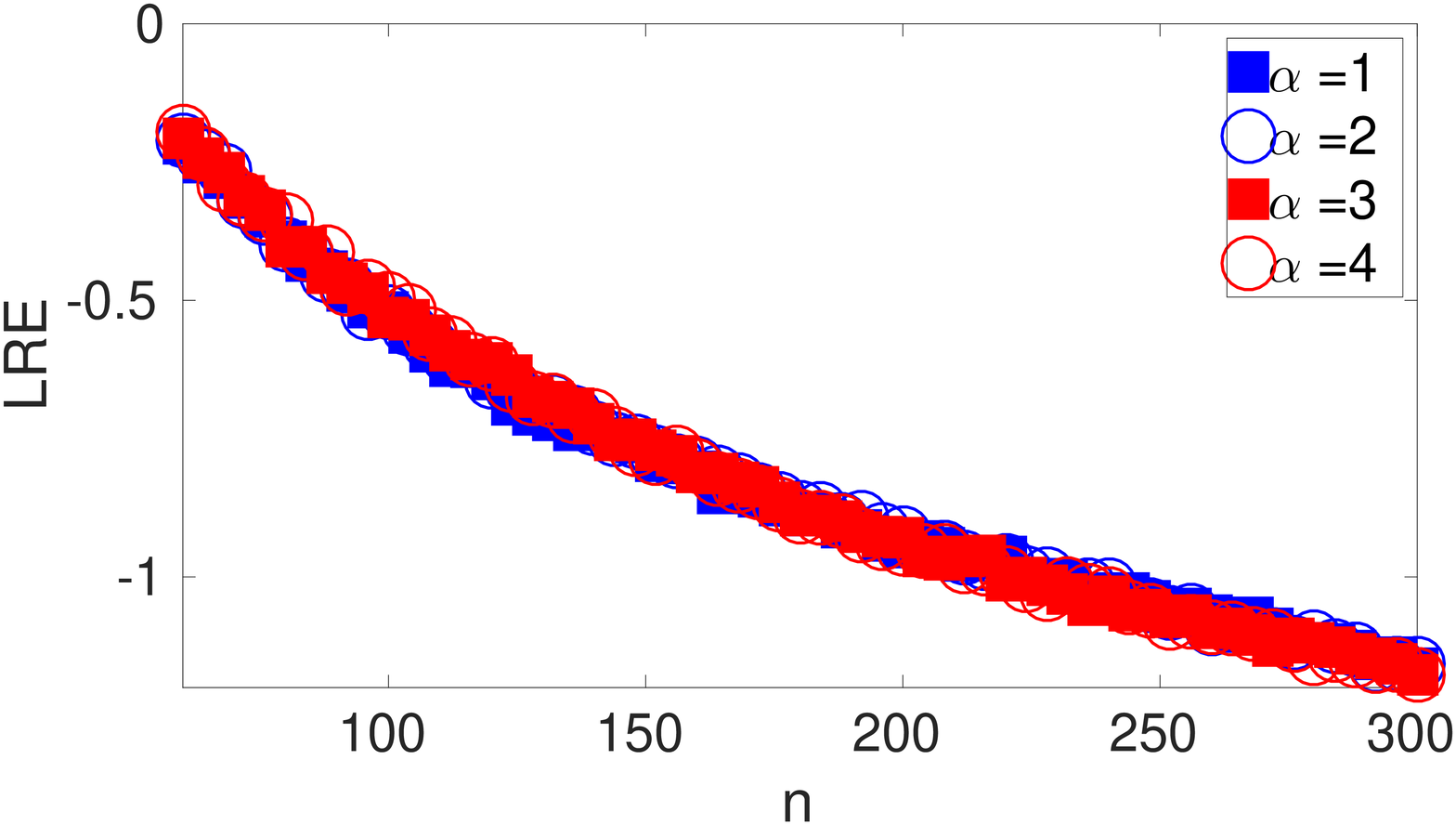}\,
\includegraphics[width=0.48\textwidth]{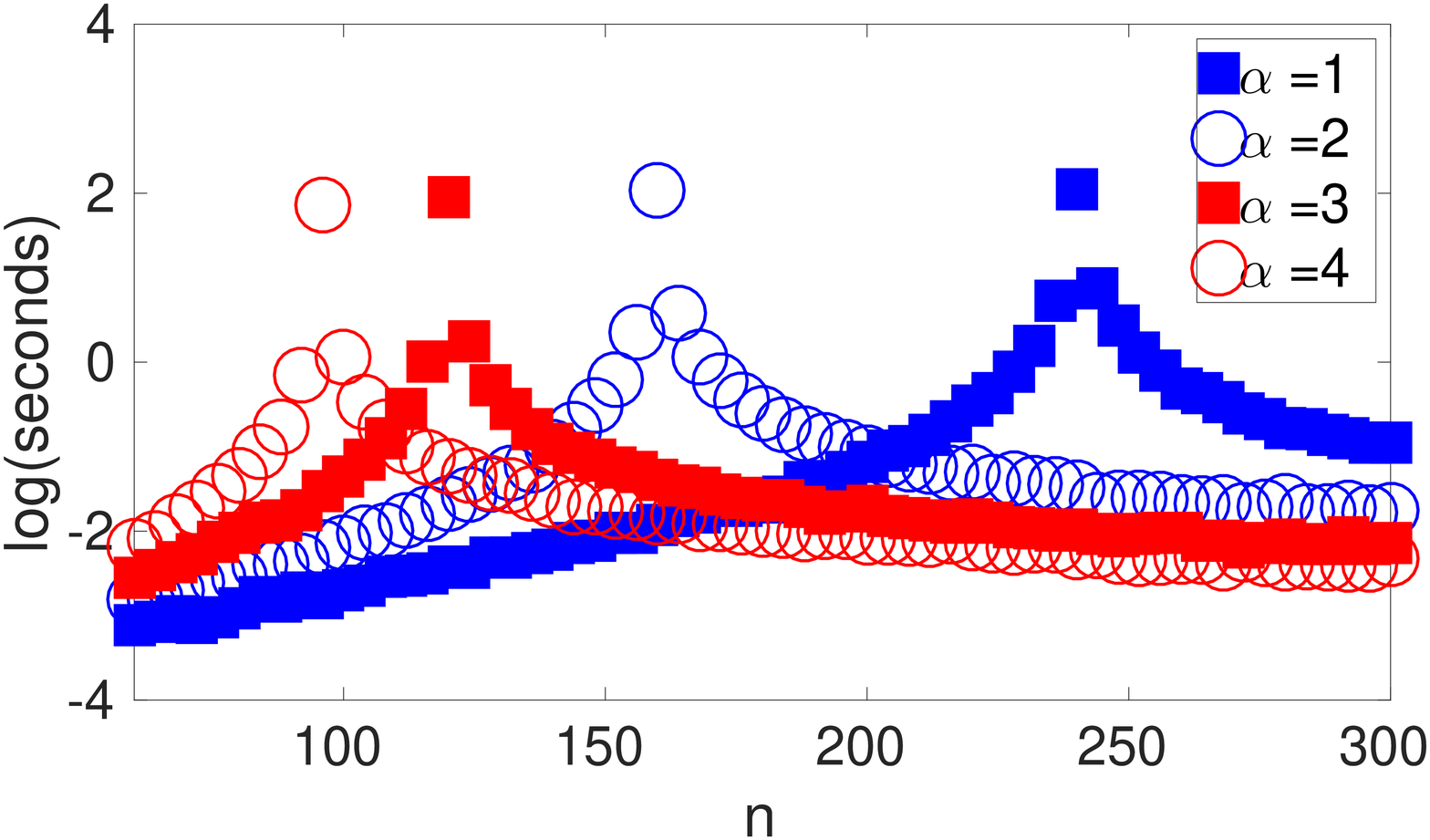}
\caption{LRE and log-runtime of th-RegTME  vs. number of samples \(n,\)
at $p =480$ and $\alpha = 1, 2, 3, 4$.}
\label{fig:TME_alpha}
\end{figure}

\subsection{Regularized TME in the presence of outliers} \label{sec:TME_outliers}

We conclude the numerical section with an illustrative example of the ability of the regularized TME to detect outliers, and upon their removal and thresholding, to provide a robust and accurate estimate of a sparse shape matrix. For a related rigorous study on the ability of Maronna's M-estimator to detect outliers, see
\cite{couillet_outs}. 

To this end, we consider the following $\epsilon$-contamination mixture model: $(1-\epsilon)n$ of the observed data, the inliers, follow an elliptical distribution with the same sparse shape matrix \(\Sp\) as above. The remaining $\epsilon n$ of the samples, the outliers,  follow an  elliptical distribution with shape matrix $\bm U(p \bm D/tr(\bm D))\bm U'$, where $\bm U$ is a unitary matrix, uniformly distributed with Haar measure, and $\bm D$ is a diagonal matrix.  In our first experiment, the diagonal entries \(d_{ii}\) are all i.i.d. uniformly distributed over $[1,5]$, so the outliers are rather diffuse. In our second experiment \(d_{11}=p,d_{22}=p/2\)
and all other $d_{ii}=1$, so the outliers are nearly on a 2-d randomly rotated subspace.

Given  $n$ samples from this $\epsilon$-contamination model, and without knowledge of $\epsilon$, the task is to accurately estimate the shape matrix $\Sp$. Since both the inliers and outliers have potentially heavy tailed distributions, it might not be possible to detect the outliers by simple schemes, such as those based on the norm of a sample or the number of its neighbors in a given radius. However, recall that by our theoretical analysis, in the absence of outliers ($\epsilon=0$),  the corresponding weights \(w_{i}\) in the regularized TME are all approximately equal. For $\epsilon\ll 1$, with all samples normalized to have unit norm, we thus expect  the inliers to still all have similar weights, and the outliers to have quite different weights, hopefully smaller though not necessarily so. With further details in Appendix \ref{sec:estimate_mu_sigma}, our proposed procedure for robustness to outliers is to estimate the mean and standard deviation of the inliers' weights. Then exclude all samples whose  weights are outside, say, the mean plus or minus two standard deviations, recompute the regularized TME on the remaining samples and threshold it.

Fig. \ref{fig:TME_outliers} illustrates the robustness of this procedure to outliers in two different settings. From left to right, for $\epsilon=0.2$ and $\epsilon=0.4$, it shows the weights of the $n$ normalized samples $\x_i/\|\x_i\|$, sorted so the first $\epsilon n$ of them are the outliers. The blue horizontal line is a robust estimate of the mean weight of the inliers, and the two red lines are this estimated mean plus and minus two standard deviations. The top row corresponds to the first outlier model with $d_{ii}\sim U[1,5]$. The second row corresponds to our second outlier model with  $D=\operatorname{diag}(p,p/2,1,\ldots,1)$. Note that this outlier shape matrix has a spectral norm $O(p)$, which does not satisfy our requirement that $\|\bm D\|\leq s_{\max}$. As indeed observed empirically, the weights of the outliers do not so tightly concentrate around some value. Yet, our outlier exclusion procedure still succeeds to exclude most of these outliers.
The error of the thresholded TME\ with outliers removed, compared to that of thresholding the original TME is shown in the right column of Fig. \ref{fig:TME_outliers}.

\begin{figure}
\includegraphics[width=0.31\textwidth]{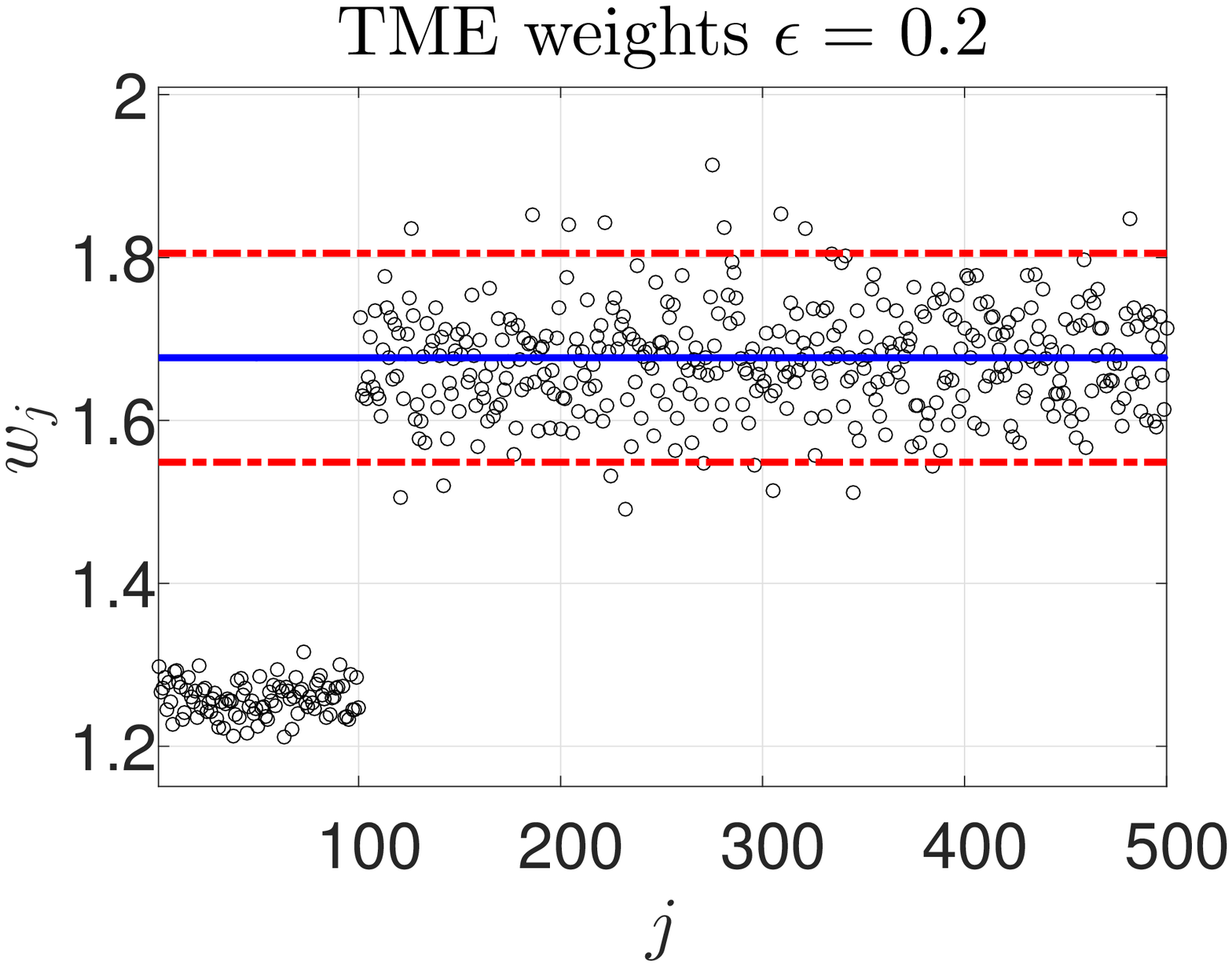}
\includegraphics[width=0.31\textwidth]{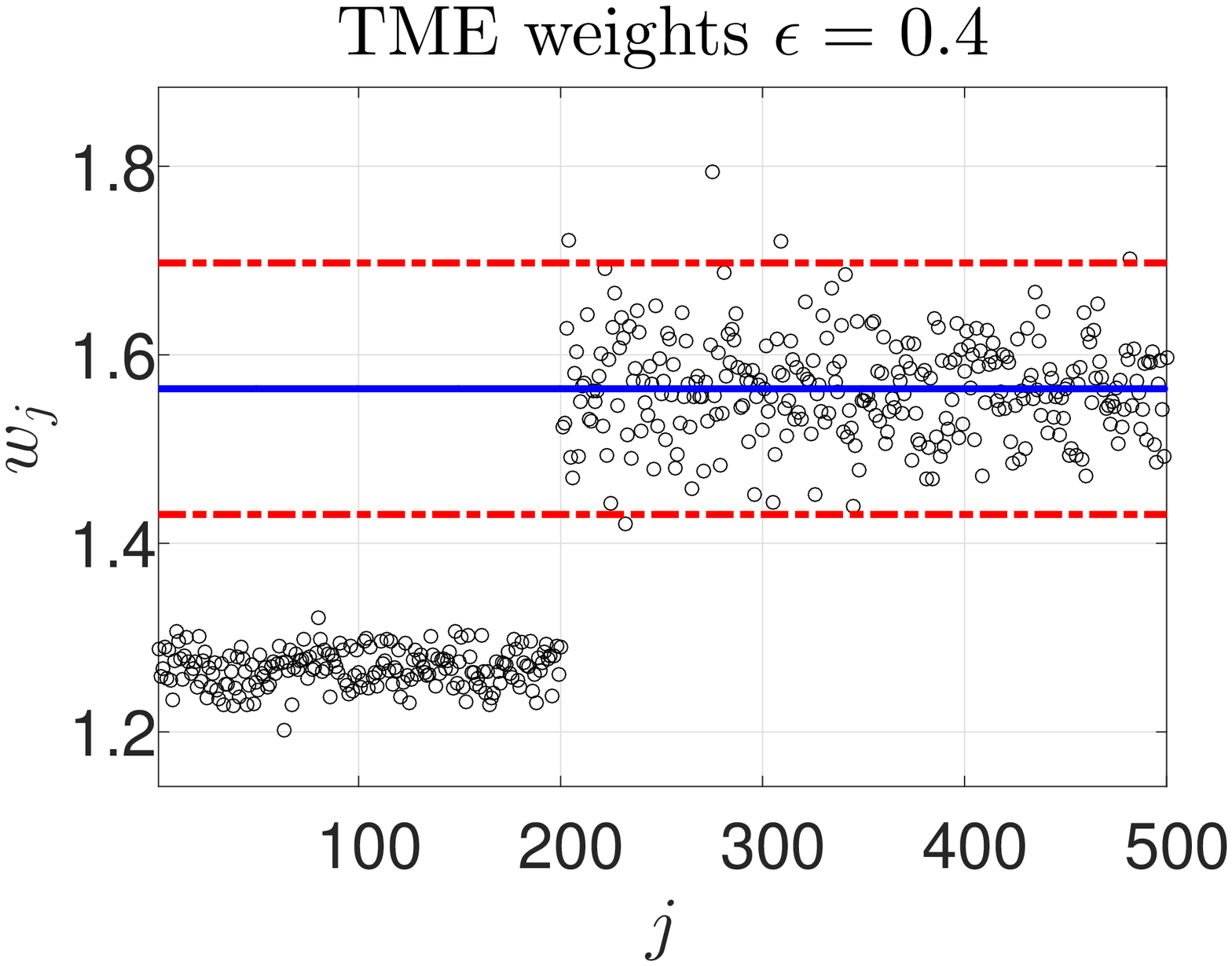}
\includegraphics[width=0.31\textwidth]{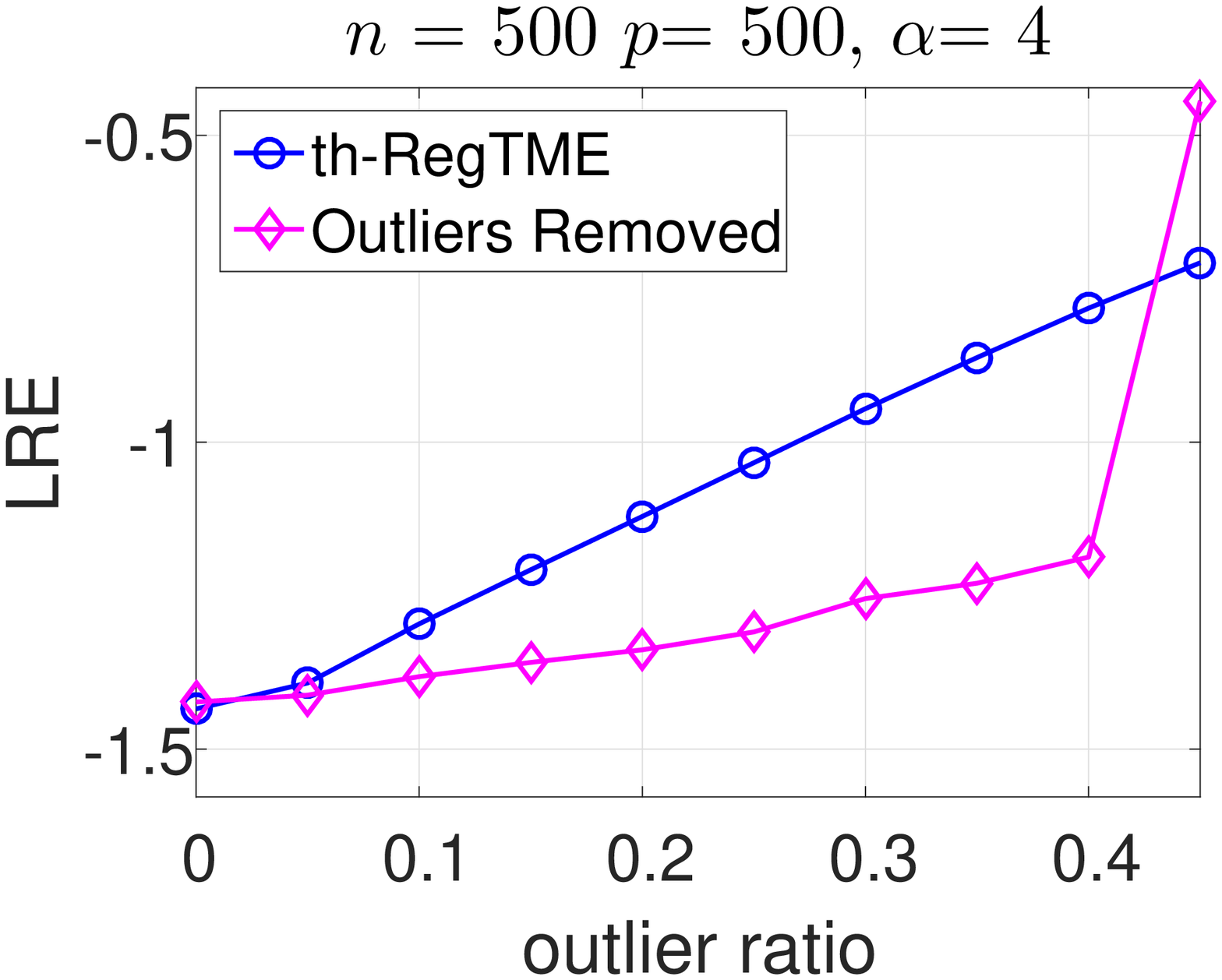}
\includegraphics[width=0.31\textwidth]{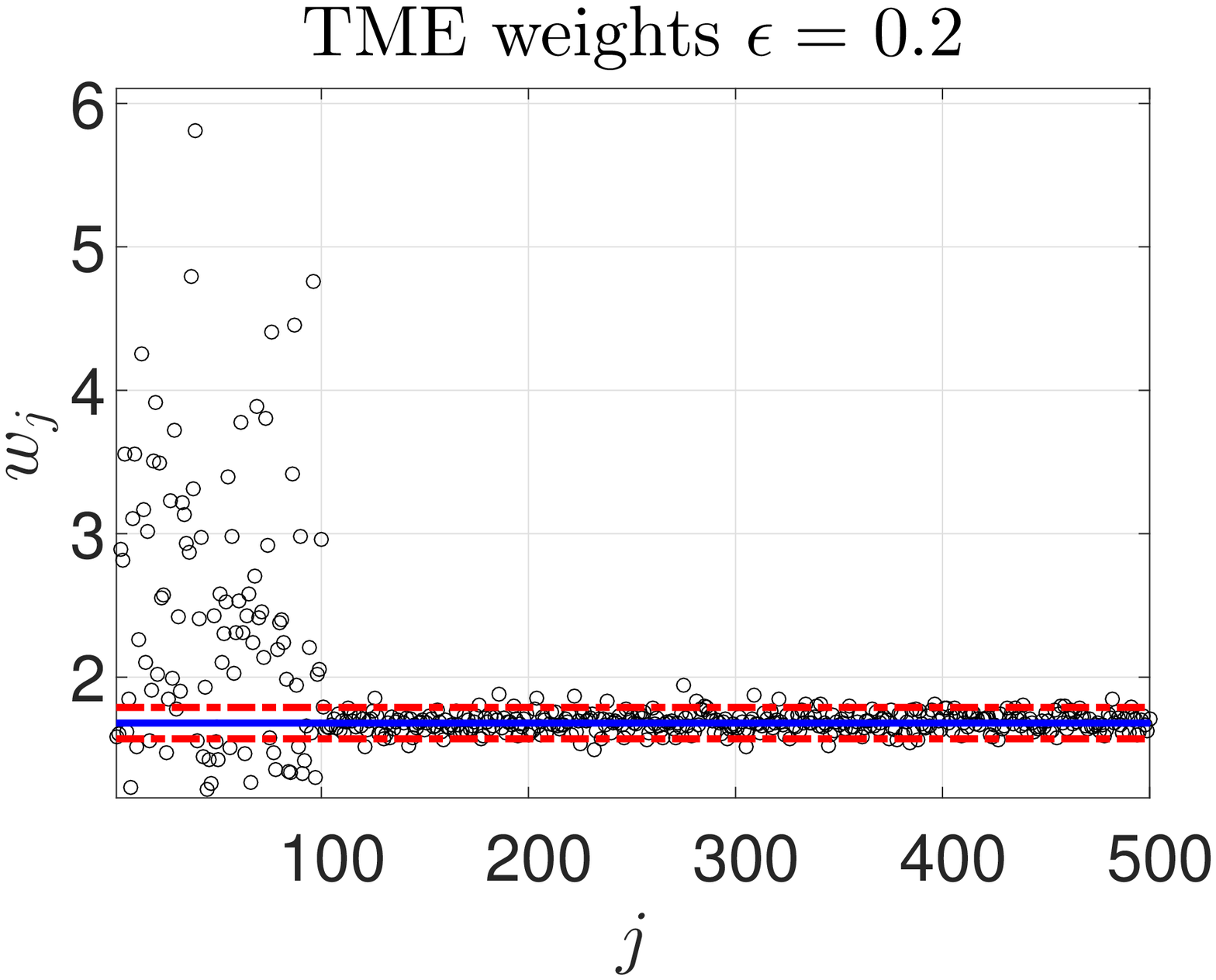}
\includegraphics[width=0.31\textwidth]{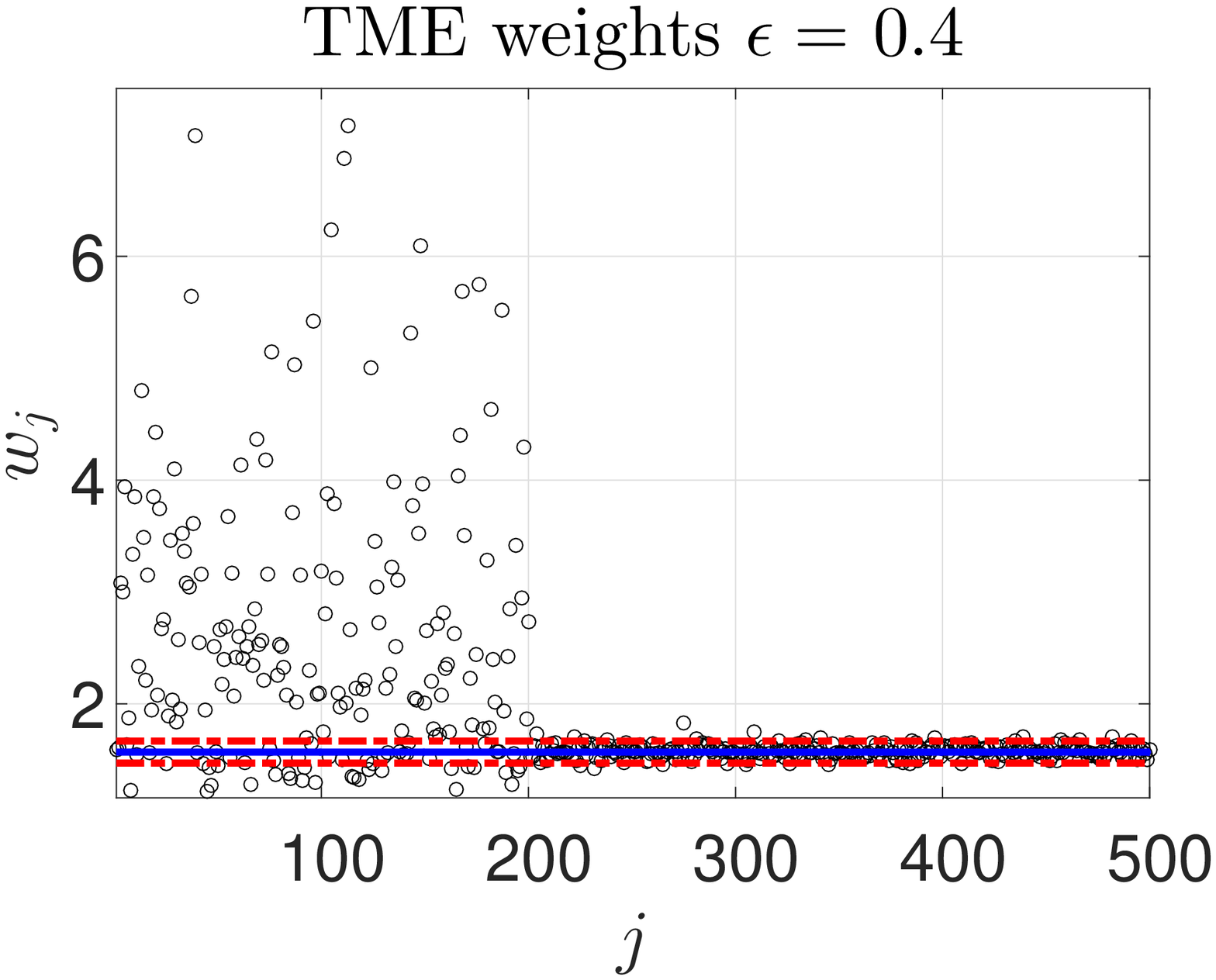}
\includegraphics[width=0.31\textwidth]{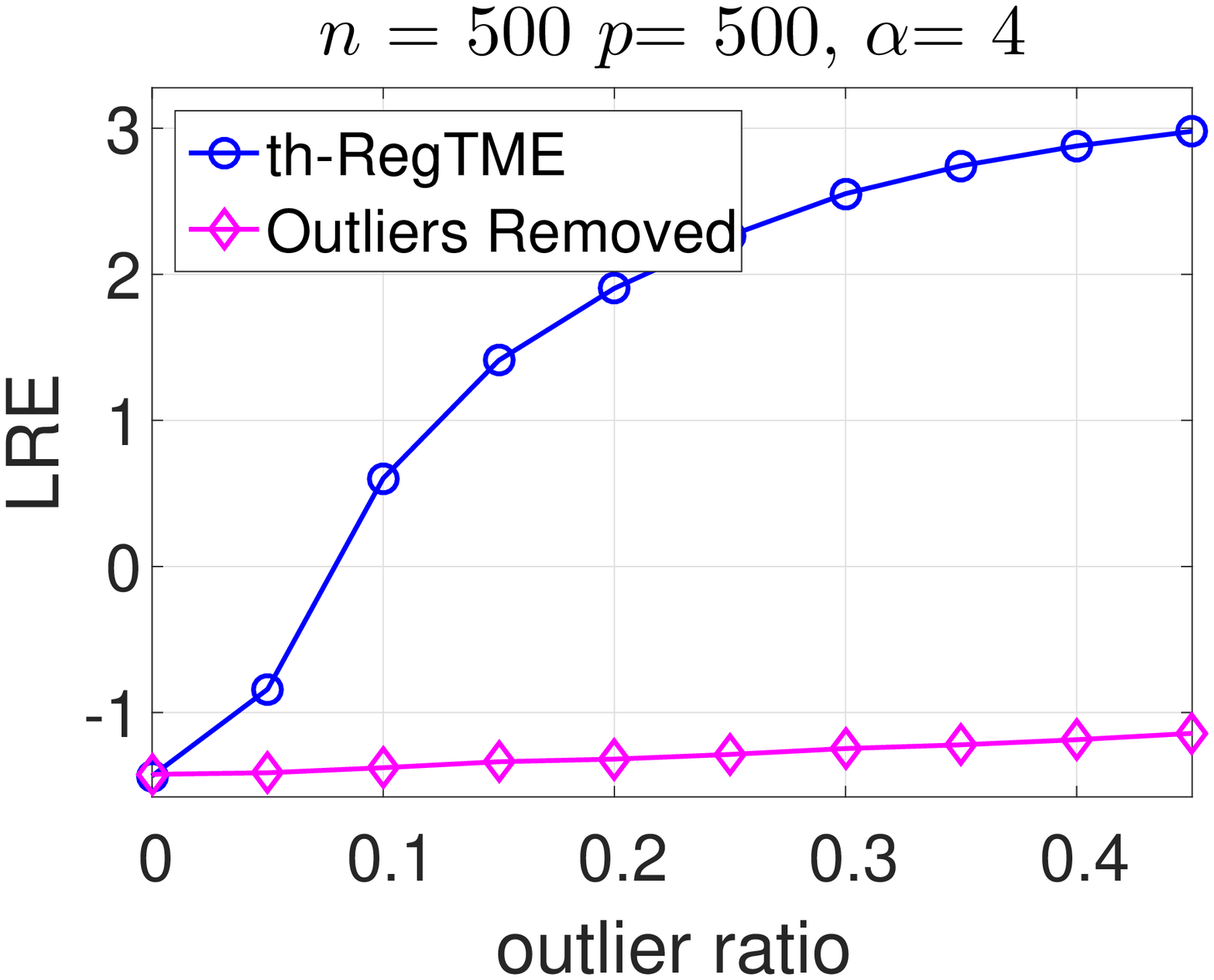}
\caption{The TME weights for $\epsilon=0.2,0.4$ and the log relative error (LRE)\ of thresholding the regularized TME, before and after outlier removal, vs. $\epsilon$. Top row \(D_{ii}\sim U[1,5]\). Bottom row $D=\operatorname{diag}(p,p/2,1,\ldots,1)$.}
\label{fig:TME_outliers}
\end{figure}

This simple example illustrates the potential ability of TME to screen outliers in high dimensional settings, at least for small contamination levels. A detailed study of this ability is an interesting topic for future research.

\section{Summary and Discussion} \label{Sec:disc}

In this paper we proposed simple estimators for the shape matrix of
possibly heavy tailed elliptical distributions, assuming the shape matrix is approximately sparse. We
further analyzed their error, showing that under the spectral norm they
are minimax rate optimal in a high-dimensional setting with \(p/n\to\gamma\).

There are several directions for future research. One direction is to extend our results to the case
\(p=n^{\beta}\), with $\beta>1$.
Our current analysis assumed the regularization parameter \(\alpha\) of TME is {\em fixed}, whereas if $p=n^\beta$ with\ $\beta>1$, just to ensure its existence would require $\alpha\to \infty$.
Handling this case thus requires extending our
analysis to allow $\alpha$ to grow with $n$
and $p$.

A question of practical interest is how to set the
threshold parameter in a data-driven fashion. \citet[Section 3]{CT}, proposed a cross validation procedure to set the threshold. Rigorously
proving that this provides a good estimate in the case of (regularized)\ TME is an interesting topic for future work.

While our work focused on approximate
sparsity of the shape matrix, robust inference under other common assumptions can also be studied.
For example, one might assume that the first few leading eigenvectors of
$\bm\Sigma$ are sparse, also known as sparse-PCA, or that $\bm\Sigma$ is the
combination of a low rank and a sparse matrix. In particular, a robust sparse-PCA
estimator may be constructed by applying a
sparse-PCA procedure to Tyler's M-estimator.

Finally, another direction for future work is to develop a computationally
efficient algorithm for sparse covariance estimation in the presence of a small
fraction of arbitrary outliers. This setting was considered in
\citet{matrixdepth}, but without a computationally tractable estimator.
Our promising preliminary results in Section \ref{sec:TME_outliers} suggest
to study whether regularized TME offers such robustness, and under which outlier models.


\section*{Acknowledgments}
We thank the three anonymous referees for multiple suggestions that greatly improved the manuscript. We also thank Teng Zhang and Ofer Zeitouni for useful discussions, and Tony Cai, Harrison Zhou, Elizaveta Levina and Peter Bickel for correspondence regarding their papers. This work was supported by NSF awards DMS-14-18386 and GRFP-00039202, UMN Doctoral Dissertation Fellowship and the Feinberg
Foundation Visiting Faculty Program Fellowship of the Weizmann Institute of
Science. We also thank the IMA and the schools of the authors for supporting collaborative visits.

\appendix

\section{Supplementary Details}


\subsection{Complexity of Calculating the Regularized TME}

\begin{proof}[Proof of Lemma \ref{lem:convergence_reg_TME}]


We arbitrarily fix a solution \(\bSa\) of \eqref{regTME}.
Since \(\bSa\) is invariant to scaling of the data, we assume that $\|\x_i\|=1$, $1 \leq i \leq n$.
We first analyze the quantity $e_1=\|\bSa-\bS_1(\alpha)\|$.
To this end, let $\lambda_{\max}=\|\bSa\|$. Taking the spectral norm in Eq. (\ref{regTME}), together with the fact that
$ \frac1{\x^T\bSa^{-1}\x}\leq \lambda_{\max}$ for any vector $\x$ with \(\|\x\|=1\),
\begin{eqnarray}
\lambda_{\max} &\leq & \tfrac{1}{1+\alpha}\left\|\tfrac{p}n\textstyle\sum_{i=1}^n \tfrac{\x_i\x_i^T}{\x_i^T\Sigma^{-1}\x_i}\right\|
        +\tfrac{\alpha}{1+\alpha} \nonumber
        \leq\tfrac{1}{1+\alpha}\lambda_{\max} C(\bm \tilde{\bm X}) +\tfrac{\alpha}{1+\alpha}.
\end{eqnarray}
Equivalently, for $1+\alpha>C(\bm \tilde{\bm X})$,
\begin{equation}
\lambda_{\max} \leq \frac{\alpha}{1+\alpha}\frac{1}{1-\frac{C(\bm \tilde{\bm X})}{1+\alpha}}.
       \nonumber 
\end{equation}
Combining this inequality with the fact that by Eq.~(\ref{regTME}) $\bSa - \frac{\alpha}{1+\alpha} \bI \in S_{+}^p$,
\begin{equation}
e_{1}= \Big\|\bSa-\frac{\alpha}{1+\alpha}\bI \Big\| = \lambda_{\max}-\frac{\alpha}{1+\alpha}\leq \frac{C(\bm \tilde{\bm X})}{1+\alpha}\frac{\alpha}{1+\alpha}
\frac{1}{1-\frac{C(\bm \tilde{\bm X})}{1+\alpha} }.
        \label{eq:bound_e1_CX}
\end{equation}

Next, we analyze the error $e_k$. We denote $\bm E_k=\bSa-\bS_k(\alpha)$ and write
\[
\bS_k(\alpha)=\bSa-\bm E_k=\bSa^{1/2}({\bm I}-\bSa^{-1/2}\bm E_k\bSa^{-1/2})\bSa^{1/2}.
\]
Since \(\bSa\) and \(\bS_k(\alpha)\) are invertible, so is
\(\bI-\bSa^{-1/2}\bm E_k\bSa^{-1/2}\). Let $\bm B_k={\bm I}- ({\bm I}-\bSa^{-1/2}\bm E_k\bSa^{-1/2})^{-1}$ and \(\bm R_{k}=\bSa^{-1/2}\bm B_k\bSa^{-1/2}\).
Then,
\[
\bS_k(\alpha)^{-1}=\bSa^{-1/2}(\bI -\bm B_{k})\bSa^{-1/2}
=\bSa^{-1}-\bm R_k.
\]
Subtracting Eq. (\ref{eq:iterations_reg_TME}) from Eq. (\ref{regTME}) gives
\begin{eqnarray}
\bm E_{k+1} & = &  \frac{1}{1+\alpha}\frac{p}{n}\sum_i \x_i\x_i^T\left(
    \frac{1}{\x_i^T\bSa^{-1}\x_i}-
\frac1{\x_i^T \bSa^{-1}\x_i-\x_i^T \bm R_k\x_i}
\right) \nonumber \\
& = &  \frac{1}{1+\alpha}\frac{p}{n}\sum_i
\frac{\x_i\x_i^T}{\x_i^T\bSa^{-1}\x_i} \left(1-\frac{1}{1-\delta_{ki}}\right), \nonumber
\end{eqnarray}
where $\delta_{ki}=\x_i^T \bm R_k \x_i/\x_i^T\bSa^{-1}\x_i$.

Let $D_k=\max_{1 \leq i \leq n}|\delta_{ki}/(1-\delta_{ki})|$. Since all terms $\x_i\x_i^T/\x_i^T\bSa^{-1}\x_i$ are positive semidefinite, the above equation implies that
\begin{equation}
\|\bm E_{k+1}\|\leq D_k\left\|\tfrac{1}{1+\alpha}\tfrac{p}{n}\textstyle\sum_i \tfrac{\x_i\x_i^T}{\x_i^T\bSa^{-1}\x_i}\right\|
        = D_k\left\|\bSa-\tfrac{\alpha}{1+\alpha}\bI \right\|
        = D_k e_1.
                \label{eq:bound_Ek_spectral}
\end{equation}
Eq. (\ref{eq:bound_e1_CX}) gives a bound on \(e_{1}\).
We now bound \(D_{k}\). Since \(\bSa\geq \frac\alpha{1+\alpha}\bI\),
\[
\|\bSa^{-1/2}\bm E_k\bSa^{-1/2}\|\leq \|\bSa^{-1}\| e_k \leq \frac{1+\alpha}{\alpha}e_k.
\]
Assume this quantity is strictly smaller than one, then
\begin{equation}
\label{eq:B_k}
\|\bm B_k\|=\|{\bm I}- ({\bm I}-\bSa^{-1/2}\bm E_k\bSa^{-1/2})^{-1} \| \leq\frac{1+\alpha}{\alpha}\frac{e_k}{1 -\frac{1+\alpha}{\alpha}e_k }.
\end{equation}
Finally, given the relation between $\bm R_k$ and $\bm B_k$,
$$
|\delta_{ki}|=\frac{|\x_i^T \bm R_k \x_i|}{\x_i^T\bSa^{-1}\x_i} =
\frac{|(\bSa^{-1/2}\x_i)^T \bm B_k (\bSa^{-1/2}\x_i)|}{\|\bSa^{-1/2}\x_i\|^2} \leq \|\bm B_k\|.
$$
Thus, assuming \(\|\bm B_k\|< 1\),
\begin{equation}
D_k = \max_i\frac{|\delta_{ki}|}{1-\delta_{ki}} \leq \frac{\|\bm B_k\|}{1-\|\bm B_k\|}
    = \frac{1+\alpha}{\alpha}e_k \cdot \frac{1}{1-2\frac{1+\alpha}{\alpha}e_k}.
        \label{eq:bound_Dk}
\end{equation}
Inserting (\ref{eq:bound_Dk}) and (\ref{eq:bound_e1_CX}) into (\ref{eq:bound_Ek_spectral}) yields that
\begin{equation}
\label{eq:ratios_ek}
\frac{e_{k+1}}{e_k} \leq \frac{C(\bm \tilde{\bm X})}{1+\alpha} \frac{1}{1-\frac{C(\bm \tilde{\bm X})}{1+\alpha}}
\frac{1}{1-2\frac{1+\alpha}{\alpha}e_k}.
\end{equation}

For the proof to hold, we required that $\frac{1+\alpha}\alpha e_k <1$ and $\|\bm B_k\|<1$. If $\frac{1+\alpha}\alpha e_k <0.5$,
then the RHS of Eq.~\eqref{eq:B_k} is less than one and both assumptions hold.
For $0<R<1$ and $1+\alpha > (3+R^{-1}) C(\bm \tilde{\bm X})$, Eq.~\eqref{eq:bound_e1_CX} implies that
$\frac{1+\alpha}{\alpha}e_1<\frac{1}{2+R^{-1}}$ and combining this with Eq.~\eqref{eq:ratios_ek} results in the estimate
$e_2/e_1 < R$. Since $R<1$, easy induction implies that for $k > 1$, $\frac{1+\alpha}{\alpha}e_k<\frac{1}{2+R^{-1}}< 0.5$, as required, and so
Eq.~(\ref{eq:linear_convergence_TME}) holds.
Since this convergence holds with any solution of \eqref{regTME}, this solution thus has to be unique.
\end{proof}

\begin{proof}[Proof of Lemma \ref{lem:CX_bound}]
    Since the regularized TME is invariant to scaling, we may assume that all $u_i\sim \chi^2_p$, and
    express $\bm x_i = \Sp^{\frac12}\bm\xi_i$, where $\bm\xi_i\sim N(\bm 0,\bm I).$
Let  $\bm U\bm D\bm U^T$ be the eigendecomposition of $\Sp$. Redefining
$\bm\xi = \bm U \bm\xi$, then
$\|\bm x_i\|^2 = \bm \xi_i^T \bm D\bm \xi_i$ and
\[
    C(\bm \tilde{\bm X}) = \left\|\Sp^{\frac12}\left(\frac 1n \sum_{i=1}^n
    \frac{\bm \xi_i\bm \xi_i^T}{\frac1p\bm \xi_i^T\bm D\bm \xi_i}\right)
    \Sp^{\frac12}\right\|.
\]
Combining Lemma \ref{Rudelson} with a union bound yields
$$\operatorname{Pr} \left( \max_i \left|\frac1p\bm \xi_{i}^T \bm D\bm \xi_i - \frac1p
\operatorname{tr}(\bm D)\right|  > \epsilon \right) < 2 n
\exp\left(-c_1 \min\left\{\frac{c_2^2 p^2\epsilon^2}{\|\bm
D\|_F^2},\frac{c_2 p\epsilon}{\|\bm D\|}\right\}  \right).
$$
Since $\|\bm D\|=\|\Sp\|$
and $\|\bm D\|_F^2 \leq p\|\Sp\|^2$,  for any fixed $\epsilon$ the above probability is exponentially small in \(p\). Taking say $\epsilon=1/2$ and recalling that
$\operatorname{tr}(\bm D)=p$,
gives that with high probability,
\[
    C(\bm \tilde{\bm X})\le 2 \|\Sp\|\cdot \Big\|
    \frac 1n\sum_{i=1}^n \bm \xi_i\bm \xi_i^T
    \Big\|.
\]
Eq. (\ref{eq:bound_CX}) follows since by Lemma \ref{lem:davidson_bound}, w.h.p. $\|\frac1n\sum_{i=1}^n
\bm \xi_i\bm \xi_i^T \| \le (1+2\sqrt{p/n})^2$. \end{proof}

\subsection{Proof of Lemma \ref{bl}}
\label{sec:proof_b1}
Most of the proof follows  \citet[p.~2583]{CT}. By the triangle inequality,
\bea
\|\tau_{t}(\bm A) - \bm B\| \le \|\tau_{t}(\bm B) - \bm B
\| + \|\ \tau_{t}(\bm A) - \tau_{t}(\bm B)\| = q_{1} + q_{2}.
        \nonumber
\eea
As in their Eq. (13),
$
q_1\le
t^{1-q}s_p$. For the second term $q_2$,
\bea
q_{2}&\le& \max_i\sum_{j=1}^p|a_{ij}|\bm 1(|a_{ij}|\ge t, |b_{ij}|<t) +
\max_i\sum_{j=1}^p |b_{ij}|\bm 1(|a_{ij}|< t, |b_{ij}|\ge t) \nonumber
\\ \nonumber
&&+
\max_i\sum_{j=1}^p |a_{ij}-b_{ij}|\bm1(|a_{ij}|\ge t, |b_{ij}|\ge t) =
q_{3}+q_{4}+q_{5}.
\eea
Similarly,  $q_4\leq C_1\sqrt{\frac{\log p}{n}} t^{-q}s_p+t^{1-q}s_p$ and $q_5\leq C_1\sqrt{\frac{\log p}{n}} t^{-q}s_p$.
For   \(q_{3}\),
$$
q_{3} \le \max_i\sum_{j=1}^p|a_{ij}-b_{ij}|\bm 1(|a_{ij}|\ge t, |b_{ij}|<t) +
\max_i\sum_{j=1}^p|b_{ij}|\bm 1(|a_{ij}|\ge t, |b_{ij}|<t).
$$
The second sum is bounded as above by \(t^{1-q}s_p\). For the first sum, we slightly differ from \cite{CT}. Since  $|a_{ij}-b_{ij}|\leq C_1\sqrt{\frac{\log p}n}$ and   $t=K\sqrt{\frac{\log p}n}$
with $K>C_1$ then all terms  satisfy \(|b_{ij}|>t(1-C_1/K).\) Hence,
\bea
q_3 &   \le & C_1\sqrt{\tfrac{\log p}{n}}\max_i \sum_{j=1}^p
\bm1\left(|b_{ij}|>t\left(1-\tfrac{C_1}K\right)\right) + t^{1-q}s_p
            \nonumber    \\
&\le &  C_1\sqrt{\tfrac{\log p}{n}} t^{-q}(1-\tfrac{C_1}K)^{-q}s_p+t^{1-q}s_p.
        \nonumber
\eea
Collecting the above inequalities concludes the proof, since
\bea
\|\tau_{t}(\bm A) - \bm B\| \le \left(3 K^{1-q}+ C_1K^{-q}(2+(1-C_1/K)^{-q}) \right) s_p \left(\frac{\log
p}{n}\right)^{\frac{1-q}2}.
                \nonumber
        \eea


\subsection{Proof of Lemma~\ref{lem:weights}} \label{weightspf}
Let $\x_i=\Sp^{1/2}\y_i$ where $\y_i$ are i.i.d. $N(0,\bm I)$. Let
$\hat{\bm\Sigma}_x$ and $\hat{\bm\Sigma}_y$ be their TME's, respectively.
It follows directly from Eq. \eqref{uniquewts} that the weights $\{w_i^x\}_{i=1}^n$ and
$\{w_i^y\}_{i=1}^n$ are identical. The latter are tightly concentrated around $1/n$ by \citet[Lemma 2.2]{MP}.

\subsection{Proof of Lemma~\ref{spec}} \label{pfspec}

Since $\x_i\sim N({\bm 0},\Sp),$  by Lemma \ref{lem:weights} the TME weights $\w=(w_1,\ldots,w_n)^T$ of Eq. (\ref{uniquewts}) are all concentrated around $1/n$. The following lemma shows that $T_w=\tr(\sum_i w_i \x_i\x_i^T)$ is close to $\tr(\Sp)=p$.
%
\begin{lem} \label{lem:p_over_Tw}
Assume the setting of Lemma~\ref{spec}. There exist constants \(C,c\) and \(c'<1\) depending on $\gamma$\ such that  $\forall\epsilon\in(0,c')$ and $n$\ sufficiently large,
\begin{equation}
        \Pr\left(\left|\tfrac{p}{T_{w}}-1\right|>\epsilon\right) \leq C n e^{-cn\epsilon^2}.
                \label{eq:Tw_over_p}
\end{equation}
\end{lem}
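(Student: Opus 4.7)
The plan is to sandwich $T_w$ between $(1\pm\epsilon)\bar X$, where $\bar X = \tfrac{1}{n}\sum_i\|\x_i\|^2$, and then argue that $\bar X$ is tightly concentrated around $\tr(\Sp)=p$. Since $\sum_i w_i = 1$ and $w_i > 0$, Lemma~\ref{lem:weights} directly implies that on an event $E_1$ of probability at least $1 - Cne^{-cn\epsilon^2}$, every $w_i$ lies in $[(1-\epsilon)/n,(1+\epsilon)/n]$, and hence
\[
(1-\epsilon)\bar X \;\le\; T_w \;=\; \sum_i w_i\|\x_i\|^2 \;\le\; (1+\epsilon)\bar X.
\]

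For the concentration of $\bar X$, I would write $\x_i = \Sp^{1/2}\y_i$ with $\y_i \overset{iid}\sim N(\bm 0,\bm I)$, stack the $\y_i$ into a single Gaussian vector $\bm z \sim N(\bm 0,\bm I_{np})$, and apply Lemma~\ref{Rudelson} to the quadratic form $\sum_i \|\x_i\|^2 = \bm z^T(\bm I_n \otimes \Sp)\bm z$ with $t = np\epsilon$. The block-diagonal matrix $\bm I_n\otimes \Sp$ has trace $np$, spectral norm $\|\Sp\|$, and Frobenius norm squared $n\|\Sp\|_F^2$. Using only the generic PSD bounds $\|\Sp\|_F^2 \le \|\Sp\|\,\tr(\Sp) = p\|\Sp\|$ and $\|\Sp\| \le \tr(\Sp) = p$ (which follow purely from the trace normalization), both terms in the minimum in the exponent of Lemma~\ref{Rudelson} are at least a constant multiple of $n\epsilon^2$ for $\epsilon$ in a fixed interval $(0,c')$, yielding an event $E_2$ on which $|\bar X - p|\le p\epsilon$, again with probability at least $1 - 2\exp(-cn\epsilon^2)$.

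On $E_1\cap E_2$, combining the two sandwich inequalities gives $T_w/p\in[(1-\epsilon)^2,(1+\epsilon)^2]$, so $|T_w/p - 1| \le 3\epsilon$ once $\epsilon$ is small. A standard reciprocal estimate then yields $|p/T_w - 1|\le 3\epsilon/(1-3\epsilon)\le 6\epsilon$ for $\epsilon\le 1/6$, and rescaling $\epsilon\mapsto\epsilon/6$ absorbs the constant into the exponential to produce the claimed bound $\Pr(|p/T_w - 1|>\epsilon)\le Cne^{-cn\epsilon^2}$.

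The main subtlety is to ensure the resulting constants $C,c,c'$ depend only on $\gamma$ (and absolute constants), despite $\|\Sp\|$ and $s_p$ being allowed to grow with $p$. This is exactly why I avoid the sparsity-based bound $\|\Sp\|_F^2\le pM^{2-q}s_p$ and instead rely only on $\|\Sp\|_F^2\le p\|\Sp\|$ and $\|\Sp\|\le p$: under these, the two terms $t^2/\|\bm A\|_F^2$ and $t/\|\bm A\|$ in Lemma~\ref{Rudelson} simplify to $n\epsilon^2$ and $n\epsilon$ respectively, with no hidden dependence on $s_p$, $M$, or $q$. This is the step I would be most careful about when writing out the full argument.
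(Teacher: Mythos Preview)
Your proof is correct and follows essentially the same three-step structure as the paper: relate $T_w$ to the unweighted trace $T=\bar X$ via the weight concentration of Lemma~\ref{lem:weights}, concentrate $T$ around $p$, and pass to the reciprocal. The only minor variation is in the middle step: the paper writes $T=\sum_j\lambda_j\chi_j^2(n)/n$, bounds $|T/p-1|\le\max_j|\chi_j^2(n)/n-1|$, and applies a union bound over the $p$ eigenvalues, whereas you apply Hanson--Wright once to the block form $\bm z^T(\bm I_n\otimes\Sp)\bm z$; your route avoids the union-bound factor of $p$ (harmless here since $p\asymp n$) and, as you note, needs only the trace normalization rather than any sparsity assumption.
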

We  prove Lemma \ref{spec} assuming Lemma \ref{lem:p_over_Tw} holds,
and then prove the latter.

\begin{proof}[Proof of Lemma \ref{spec}]
By definition,
\begin{equation}
\| \bm{\hat\Sigma} - \bm{\hat S}\|_{\max} =
        \Big\| \sum_{i=1}^n \left( \tfrac{pw_i}
{ T_w} - \tfrac1n \right) \bm{x_i x_i}^T \Big\|_{\max} 
        \leq 
       \left\|\tfrac{n p {\bm w}}{T_w}-{\bm 1}\right\|_\infty \cdot
\Big\| \tfrac1n\sum_{i=1}^n \bm{x_i x_i}^T\Big\|_{\max}.
    \label{eq:pSigma_diff}
\end{equation}
Since \(\x_i\sim N(0,\Sp)$ with $\Sp\in\mathcal U(q,s_p,M)$, then w.h.p., \(\|\frac1n\sum \x_i\x_i^T\|_{\max}\leq 2M$.  
As for the first term on the RHS\ of Eq.~\eqref{eq:pSigma_diff},
by the triangle inequality,
\begin{equation}
\left\|\tfrac{n p {\bm w}}{T_w}-{\bm 1}\right\|_\infty
        =
\left\|\tfrac{n p {\bm w}}{T_w}-n\w+n\w-{\bm 1}\right\|_\infty
        \leq
        \|n\w\|_\infty\left|\tfrac{p}{T_w}-1\right|  +    \|n\w-{\bm 1}\|_\infty.
                \nonumber
\end{equation}
Hence,
\begin{equation}
\Pr\left(\left\|\tfrac{n p {\bm w}}{T_w}-{\bm 1}\right\|_\infty > \epsilon \right)
        \leq
        \Pr(\|n\w\|_\infty|\tfrac{p}{T_w}-1| > \epsilon/2)
        +
        \Pr(\|n\w-{\bm 1}\|_\infty >\epsilon/2).
                \nonumber
\end{equation}
Lemma \ref{lem:weights} provides an exponential bound on the second term. For the first term, applying Eq.~(\ref{eq:ABc_lambda}) with \(\lambda=2\) gives
\begin{eqnarray}
\Pr(\|n\w\|_\infty |\tfrac{p}{T_w}-1| > \epsilon/2) &\leq &\
                \Pr(\|n\w\|_\infty > 2) + \Pr(|\tfrac{p}{T_w}-1| > \epsilon/4)         \nonumber \\
        & \leq &
                \Pr(\|n\w-{\bm 1}\|_\infty > 1) + \Pr(|\tfrac{p}{T_w}-1| > \epsilon/4).
        \nonumber
\end{eqnarray}
By Lemmas \ref{lem:weights} and \ref{lem:p_over_Tw}, these two probabilities
are exponentially small.
\end{proof}

\begin{proof}[Proof of Lemma \ref{lem:p_over_Tw}]
As
$
|\tfrac p{T_w}-1| =  \tfrac{p}{T_w}|1-\tfrac {T_w}p|$,
by Eq.~(\ref{eq:ABc_lambda}) with $\lambda=2$
\bea
\Pr\left(\tfrac
p{T_w}|1-\tfrac {T_w}p|>\epsilon \right)
&\le& \Pr\left(\tfrac p{T_w}>2 \right) +
    \Pr\left(|1-\tfrac{T_w}p|>\epsilon/2 \right)
    \nonumber \\
&\le& \Pr\left(|\tfrac{T_w}p-1|>1/2 \right) + \Pr\left( |1-\tfrac{T_w}p|>\epsilon/2\right)
    \nonumber \\
&\le& 2\Pr\left( |1-\tfrac{T_w}p|>\epsilon/2\right).
    \nonumber
\eea
Next, we relate $|1-\tfrac{T_w}p|$
to $|1-\tfrac Tp|$, where $T=\tr (\frac1n\sum_{i=1}^n \bm x_i \bm x_i^T )$. Note that
\begin{eqnarray}
|1-\tfrac{T_w}p | &\le & |1-\tfrac{T}p| + |\tfrac{T}p - \tfrac{T_{w}}p|
=|1-\tfrac Tp|+ \tfrac1p\left| \sum_{i=1}^n (\tfrac1n-w_i)\bm x_i^T \bm x_i \right|   \nonumber \\
&\le & |1-\tfrac Tp| + \|n \w - \bm 1\|_\infty \cdot |\tfrac Tp|.
    \nonumber \label{eq:Tw_to_T}
\end{eqnarray}
Therefore
$$
\Pr\left( |1-\tfrac{T_w}p|>\tfrac\epsilon2\right) \le
\Pr\left(|1-\tfrac Tp|>\tfrac\epsilon4
\right)+  \Pr\left( \|n \w - \bm 1\|_\infty
\cdot |\tfrac Tp|>\tfrac\epsilon4\right)
    \nonumber
= q_1+q_2.
$$
Applying Eq.~(\ref{eq:ABc_lambda}) with $\lambda = 2$ to the second term gives
\bea
q_2 &\le& \Pr\left( \|n \w - \bm 1\|_\infty > \epsilon/8 \right)  +
\Pr\left(\tfrac Tp>2\right)
    \nonumber \\
&\le& \Pr\left( \|n \w - \bm 1\|_\infty > \epsilon/8 \right)  +
\Pr\left(|1-\tfrac Tp|>1\right).
    \nonumber
\eea
By Lemma~\ref{lem:weights}, the first probability above has the desired exponential decay. To conclude the proof, we thus need to provide an exponential bound on \(q_1\).


Let \(\lambda_1\geq \lambda_2\geq\ldots\geq \lambda_p\) be the eigenvalues of \(\Sp\). Since $\x_i\sim N({\bm 0},\Sp)$,
\bea
T=\tr\left(\frac1n\sum_{i=1}^n \bm{x}_i \bm{x}_i^T\right)&=& \sum_{j=1}^p
\lambda_j \chi_j^2(n)/n,
    \nonumber \label{trace_eigs}
\eea
where the $\chi^2_j(n)$ are i.i.d.~chi-square random variables with $n$ degrees
of freedom for $j=1,2,\dots,p$. Given that
$\tr(\bm S_p) = \sum_{j=1}^n \lambda_j = p$,
\bea
\left|1-\frac{T}{p}\right| =
\frac 1p \left|\sum_{j=1}^p \lambda_j \left(1-\frac{\chi_{j}^2(n)}{n}\right)   \right|
\le
\max_j \left|\frac{\chi_{j}^2(n)}{n}-1\right|.
        \nonumber
\eea
Since $\chi^2$ random variables are sub-exponential, for a suitable constant \(c>0,\)
\bea
{\rm {Pr}}\left(\bigg|\frac{\chi^2(n)}{n}-1\bigg|>\epsilon
\right)<\exp\left(-cn\epsilon^2 \right).
         \label{eq:chibound}
\eea
Therefore by a union bound,
the term \(q_{1}\) is also exponentially small.
\end{proof}

\subsection{Proof of Proposition~\ref{prop:r_finite}}
        \label{r_exists_pf}
%
     To prove the existence of a unique $r^{*}=r^*(p,n,\alpha,\Sp)$
which satisfies Eq.~\eqref{eq:EQ},
    we first show that $\mathbb E [Q(r)]$ is strictly monotone increasing in $r$ and
    then use the intermediate value theorem. 

    To simplify notation, let \(\bm T = \frac1n\sum_{j=1}^{n-1}\bm \xi_j \bm \xi_j^T\) and $\beta=\beta(r)=\frac{n\alpha}{pr}$. Then
 \bea
 \label{eq:EQr_def}
 \mathbb E[Q(r)] = \mathbb E_{ \bm\xi_i  }[\mathbb E_{\bm y}[Q(r)]]
     = \mathbb E \left[    \frac1p    \tr\left(  \left(\bm T + \beta \bm S_{p}^{-1} \right)^{-1}\right)   \right],
    \eea
where the expectation is now only over the random variables \(\bm \xi_i\).

First, we show that for any fixed $\Sp\in\mathcal{S}_p^{++}$, $\E[Q(r)]$ is strictly monotone
increasing in $r$. Indeed, taking the derivative with respect to $r$ and using the identity
$\tr(\bm A \bm B) = \tr(\bm B \bm A)$ gives that
\begin{equation}
\frac{d}{dr} \E[Q(r)] = \frac{n\alpha}{pr^2} \,\E\left[\frac1p \tr( (\bm T+\beta \Sp^{-1})^{-2} \Sp^{-1})\right].
        \nonumber
\end{equation}
Applying \citet{bhatia2013matrix}[Prob. III.6.14] and Jensen's inequality, 
\begin{eqnarray}
\frac{d}{dr} \E[Q(r)]  & \geq  & \frac{n\alpha}{pr^2}\lambda_{\min}(\Sp^{-1})
\E[\frac1p \tr((\bm T+\beta \Sp^{-1})^{-2})]
                                \nonumber \\
&  \geq & \frac{n\alpha}{pr^2}\frac1{s_{\max}}\E[1/\lambda_1(\bm T + \beta \Sp^{-1})^2] 
                        \nonumber \\
& \geq & \frac{n\alpha}{pr^2}\frac1{s_{\max}}
        \frac{1}{\E[ \lambda_1(\bm T + \beta \Sp^{-1})]^2}.
                        \nonumber 
\end{eqnarray}
Clearly, $\lambda_1(\bm T+\beta\Sp^{-1})\le \lambda_1(\bm
T)+\beta/{\lambda_{\min}(\Sp)}$. Furthermore, upon averaging over the
random variables $\bm \xi_i$, by Lemma \ref{lem:davidson_bound}, 
\(\E[\lambda_1(\bm T)] \leq (1+\sqrt{p/n})^2$. Therefore, for any fixed $\Sp$, the derivative of $\E[Q(r)]$ is
strictly positive for any $r>0$. Hence if there exists a solution 
to Eq. (\ref{eq:EQ}), then it must be unique. 

Next, we show that this solution must satisfy $r\geq r_{\min}$. By definition,
\bea
\E[Q(r)] &=& \frac1p \sum_{j=1}^n \frac{1}{\lambda_j(\bm T + \beta\bm S_p^{-1})} 
        \nonumber \\ 
        &\le&
         \frac1p \sum_{j=1}^n \frac{1}{\lambda_j(\beta\bm S_p^{-1})} = \frac1\beta \frac1p\sum_{j=1}^p \lambda_j(\bm S_p) = \frac{n\alpha}{p r}.
    \label{eq:Qr_lower_bound}
\eea
Combining (\ref{eq:Qr_lower_bound}) with  (\ref{eq:EQ}) implies that
\bea
r^{*}(p,n,\alpha,\Sp)\ge \frac{n}p\frac{\alpha }{1+\alpha-p/n} =r_{\min }.
    \nonumber \label{expect1}
\eea

Finally, we bound $r$ from above. To this end, note that
\bea
\E[Q(r)] & = & \frac1p \E\left[\sum_j \frac{1}{\lambda_j(\bm T + \beta \Sp^{-1})}\right]
         \geq  \frac1p\E\left[\sum_j \frac1{\beta/\lambda_j(\Sp) + \|\bm T\|}\right]
        \nonumber \\
        & \geq & \E\left[\frac{1}{\beta + s_{\max} \|\bm T\|}\right]
        \geq \frac{1}{\beta + s_{\max} \E[\|\bm T\|]}
         \nonumber
\eea
where the last inequality is Jensen's inequality. 
By Lemma \ref{lem:davidson_bound}, $\E[\|\bm T\|]\leq (1+\sqrt{p/n})^2$. Hence, for $\alpha > p/n-1 +  s_{\max} (1+\sqrt{p/n})^2$, the solution to Eq. (\ref{eq:EQ}) satisfies
\[
r^* \leq r_{\max} = \frac{n}{p} \frac{\alpha}{1+\alpha - p/n -  s_{\max} (1+\sqrt{\gamma})^2}.
\] 


\subsection{Proof of Lemma~\ref{lem1}} \label{pflem1}
%
%

Let $\hat{\bm S} = \frac1n\sum_{k=1}^n\bm
x_k\bm x_k^T$ and  $\hat{\bm T}=\frac1n\sum_{k=1}^n\bm \xi_k\bm \xi_k^T$, where   $\bm x_i=\bm S_p^{\frac12}\bm \xi_i$ and
$\bm \xi_i\overset{iid}\sim N(\bm 0,\bm I)$.
Then, Eq.~\eqref{eq:gee} may be written as
\begin{eqnarray}
\frac 1rg(\bm u)_i &=& 1-\frac{1/(1+\alpha)}{\frac1p \bm x_i^T\left(\hat{\bm
S}+\beta \bm I\right)^{-1}\bm x_i}
= 1-\frac{1/(1+\alpha)}{\frac1p \bm \xi_i^T\left(\bm S_p^{-\frac12}\hat{\bm
S}\bm S_p^{-\frac12}+\beta \bm S_p^{-1}\right)^{-1}\bm
\xi_i} \nonumber\\
&=& 1-\frac{1}{1+\alpha}\frac{1}{\frac1p \bm \xi_i^T\bm E\bm
\xi_i},
    \label{eq:g_u_i}
\end{eqnarray}
where $\bm E = \left(\hat{\bm
T}\bm +\beta \bm S_p^{-1}\right)^{-1}$  and
$\beta=\alpha\frac np\frac 1r$.
The quadratic form $\frac1p \bm \xi_i^T\bm E\bm
\xi_i$ is difficult to analyze directly because
$\bm E$ depends on ${\bm \xi}_i$.
To disentangle this dependency, let
$\hat{\bm{T}}_{-i}=\frac1n\sum_{k\neq i}\bm \xi_k\bm \xi_k^T$, and
$\bm E_{-i}=(\hat{\bm T}_{-i}+\beta \bm S_p^{-1})^{-1}$.
As \(\bm E^{-1}\) and $\bm E_{-i}^{-1}$ differ by a rank-one matrix $\frac1n\bm \xi_i\bm \xi_i^T$, by the Sherman-Morrison formula,
\bea
\bm E=  \bm E_{-i} -   \frac1n \frac{\bm E_{-i}\bm \xi_i \bm \xi_i^T \bm
E_{-i}}{1+\frac1n\bm \xi_i^T\bm E_{-i}\bm \xi_i}.
    \nonumber
\eea
Therefore,
denoting by $Q_i$ the quadratic form
\bea
Q_i(r)\equiv Q_i=\frac1p \bm \xi_i^T\bm E_{-i}\bm \xi_i,
\label{eq:Qdef}
\eea
it follows that
\bea
{\frac1p \bm \xi_i^T\bm E\bm \xi_i}=
Q_{i}- \frac{\frac pn  Q_i^2}{1+\frac pn  Q_i}
=
\frac{ Q_i}{1+\frac pn  Q_i}.
 \label{eq:Qeq}
\eea
Plugging this expression into Eq.~\eqref{eq:g_u_i} gives
\bea
\frac1r g(\bm u)_i =
\frac{ Q_i(1+\alpha -\frac pn) - 1}{(1+\alpha) Q_i}
\label{eq:Qr_poly}.
\eea

Next, to establish a concentration
bound for $g(\bm u)_{i}/r$, we study the concentration of $Q_i$.
Since $\bm \xi_i\sim N({\bm 0},{\bm I})$ and is  independent
of $\bm E_{-i}$,
\bea
\mathbb E Q_i=\mathbb E \tr(\bm E_{-i})/p.
    \nonumber
\eea
We first show that $Q_i$ concentrates tightly around ${\tr}(\bm E_{-i})/p$ in view of
concentration of quadratic forms. We then show that
${\tr}(\bm E_{-i})$ concentrates tightly around its mean using results about
the concentration of certain functions of the eigenvalues of random matrices.

Applying Lemma \ref{Rudelson} with $\bm\xi=\bm\xi_i$ and viewing the matrix $\bm E_{-i}$ as fixed,
\[
\Pr\left(\left|Q_i - \frac1p\tr\left(\bm
E_{-i}\right)\right|> \epsilon \right)\le 2
\exp\left(-c_1\min\left\{\frac{c_2^2 p^{2}\epsilon^2}{\left\|\bm E_{-i}\right\|_F^2}
,\frac{c_2 p\epsilon}{\left\|\bm E_{-i}\right\|}  \right\}\right),
\]
where the above probability is only w.r.t. $\bm \xi_i$. Next, given that $\bm E_{-i} = (\bm T_{-i}+\beta\Sp^{-1})^{-1}$,
then $\|\bm E_{-i}\|   \le \frac{s_{\max}}{\beta}$ and
$\|\bm E_{-i}\|_F^2\leq p s_{\max}^2/\beta^2$.
Thus,
\bea
\Pr\left(\left|Q_i - \frac1p{\tr}\left(\bm E_{-i}\right)\right|>
\epsilon \right)\le C\exp\left(-cp\epsilon^2\right),
    \label{eq:Q_min_tr_E}
\eea
where now the probability is over all of the $\bm\xi_k$'s.

It remains to obtain a concentration inequality for ${\tr}\left(\bm
E_{-i}\right)/p$.
To this end,  consider the following \(p\times(n-1+p)\) matrix,
\[
{\bm Y} = \left(
\begin{array}{cccccc|ccc}
{\bm \xi}_1 &\cdots & {\bm \xi}_{i-1} & {\bm \xi}_{i+1} & \cdots &{\bm \xi}_n & \sqrt{n\beta}\Sp^{-1/2}
\end{array}
\right).
\]
By definition,
all entries of ${\bm Y}$ are independent, the first $p\times(n-1)$ are standard Gaussian random variables and the rest deterministic.
Then, by
\citet{gui}[Corollary 1.8b]\footnote{There is a typo in the original paper. In the
notation of their Corollary 1.8, $\bm Z$ should
be replaced with $\bm Z/(M+N)$.},
for any function
$h:\mathbb{R}\to\mathbb{R}$ such that
$h(x^2)$ is Lipschitz with constant \(L\), for any \(\delta>0\)\
\begin{equation}
\Pr\left(\tfrac{1}{K}\left|\tr h(\tfrac{{\bm Y}{\bm Y^T}}{K})-\mathbb{E}\tr(h(\tfrac{{\bm Y}{\bm Y^T}}{K}))\right|>\delta\right) \leq 2 \exp\left(-\tfrac{\delta^2K^2}{2L^2}\right)
        \label{eq:concentration_tr_h}
\end{equation}
where \(K=2p+n-1\) and for a symmetric matrix $\bm A$ with eigenvalues $\lambda_j$, $\tr h(\bm A)=\sum_j h(\lambda_j)$.

Since
${\bm Y}{\bm Y}^T=n(\hat{\bm T}_{-i}+\beta\Sp^{-1})=n\bm E_{-i}^{-1}$, consider the function \[
h(x)=\frac{n}{p}\cdot\frac1x
\]
for which  $\frac{1}{2p+n-1}\tr h({\bm Y}{\bm Y}^T/(2p+n-1))=\tr(\bm E_{-i})/p$. Next, note that
for sufficiently large $n$ and sufficiently small $\epsilon$
\[
\lambda_{\min}\left(\frac{\bm Y\bm Y^T}{2p+n-1}\right) = \frac{n}{2p+n-1}
\lambda_{\min}(\hat{\bm T}_{-i}+\beta \Sp^{-1}) \geq \frac{1}{2\gamma+1+\epsilon}\frac{\beta}{s_{\max}}=x_0.
\]
We thus apply the function \(h\) only in the interval \(x\ge x_0\). The Lipschitz constant of \(h(x^2)\) for $n$ sufficiently large is bounded by
\[
L\leq \left|\frac{d}{dx}h(x^2)\bigg|_{x=x_0}\right|\leq
        16 \frac{(\gamma+0.5+\epsilon)^3}{\gamma-\epsilon}
 \left(\frac{s_{\max}}{\beta}\right)^3
 \leq
 16 \frac{(\gamma+1)^3}{\gamma}
 \left(\frac{s_{\max}}{\beta}\right)^3.
\]
Hence, applying (\ref{eq:concentration_tr_h}), there exists a positive constant \(c\) that depends on \(\gamma,\alpha,r\) and \(s_{\max}\) such that
\bea
\Pr\left(\frac1p\left|\tr(\bm E_{-i}) - \E \tr (\bm E_{-i})  \right|>\delta\right) \leq 2 \exp\left(-c p^2\delta^2\right).
        \label{eq:tr_E_concentration}
\eea
Next, by the triangle inequality
\begin{equation}
\Pr(|Q_i-\tfrac{\E\tr(\bm E_{-i})}p|>\epsilon) \leq
         \Pr\left(|Q_i-\tfrac{\tr(\bm E_{-i})}p|>\tfrac\epsilon2\right)+
         \Pr\left(|\tfrac{\tr(\bm E_{-i})}p- \tfrac{\E\tr(\bm E_{-i})}p |>\tfrac\epsilon2\right)
        \nonumber
\end{equation}
Combining the above equation with Eqs. (\ref{eq:Q_min_tr_E}) and (\ref{eq:tr_E_concentration}),
implies that 
at the value of  \(r\) specified in Proposition~\ref{prop:r_finite}, for which  $\mathbb E\left[\tr(\bm
E_{-i})/p\right] = \frac{1}{1+\alpha-\frac pn}$, 
\bea
{\rm {Pr}}\left(\left| Q_i- \frac{1}{1+\alpha-\frac pn} \right| > \epsilon
\right)<C e^{-cp\epsilon^2}.
    \label{eq:Qtail}
\eea

We are finally ready to establish a concentration result
for $\frac1r g(\bm u)$. Combining Eq.~\eqref{eq:Qr_poly} and  a union bound over all \(p\) coordinates of $g$,
\bea
\Pr\left(\left\|\tfrac1r g(\bm u) \right\|_{\infty}  > \epsilon \right)
\le p
\Pr\left(\left| \tfrac1r g(\bm u)_i \right| > \epsilon     \right)
\le p\Pr\left(\left|\tfrac{Q_i(1+\alpha-p/n)-1}{(1+\alpha)Q_i}
\right| > \epsilon  \right). \nonumber
\eea
Applying Eq.~\eqref{eq:ABc_lambda} with $\lambda=1$ to the equation above gives
$$
\Pr\left(\left\|\tfrac1r g(\bm u) \right\|_{\infty}  > \epsilon \right) <
p\Pr\left(\left| Q_i(1+\alpha-\tfrac{p}n)-1) \right| >
\epsilon \right)+ p\Pr\left( (1+\alpha)Q_i < 1 \right).
$$

By Eq.~\eqref{eq:Qtail}, the first term on the RHS is exponentially small in \(p\). As for the second term, since \((1+\alpha)^{-1}<(1+\alpha-p/n)^{-1}\),
then again by Eq. (\ref{eq:Qtail}), $\Pr(Q_i<1/(1+\alpha))$ is also exponentially small in $p$.
The lemma thus follows from the boundedness of $r$ from above, as established in Proposition~\ref{prop:r_finite}.


\subsection{Proof of Lemma~\ref{lem2}} \label{pflem2}

For any \(\bm v\in\R^n\), denote $\hat{\bm S}(\bm v)=\frac1n\sum_k v_k\x_k\x_k^T$ and
\begin{equation}
\F(\bm v)=\left(\frac1r\hat{\bm S}(\bm v)+\beta\bm I\right)^{-1}.
    \label{Fdef}
\end{equation}
We  prove Lemma \ref{lem2} using the following Lemma, which is proved  latter.

\begin{lem}
\label{lemma:prop_F}
Assume the setting of Theorem~\ref{regthm2} and let $\bm u = r\bm1$, with $r$
defined in Proposition~\ref{prop:r_finite}. The matrix $\bm F$ of Eq.~\eqref{Fdef}
satisfies:
\begin{enumerate}
\item \label{propF1} For all $\bm v \in \R^n$ with $\|\bm v-\bm u\|_\infty \le r$,  $\|\F(\bm v)\|\leq 1/\beta$.
\item \label{propF2} There exists $c>0$ such that with probability at least $1-\exp(-cp)$, for all $\bm v \in \R^n$ with $\|\bm v-\bm u\|_\infty\le r$,
\begin{equation}
\lambda_{\min}(\F(\bm v)) \geq c_F = \frac{1}{2s_{\max}(1+2\sqrt{\gamma})^2+\beta}.
        \label{eq:lambda_min_F}
\end{equation}
\item \label{propF3} With the same  constant $c>0$ above, there exist $c',C>0$ such that
\begin{equation}
\Pr\left( \forall \bm v\in B_{c'}(\bm u), \|\bm F(\bm u) - \bm F(\bm v)\| < C\|\bm v - \bm
u\|_{\infty}\right)\ge 1-\exp(-cp).
    \label{eq:Diff_Fu_Fv}
\end{equation}
\end{enumerate}
\end{lem}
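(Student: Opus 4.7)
My plan for Lemma~\ref{lemma:prop_F} is to treat the three parts sequentially, with parts \ref{propF2} and \ref{propF3} piggybacking on a single high-probability event for the spectral norm of the sample covariance.

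For part \ref{propF1}, I would first observe that since $\bm u = r \bm 1$, the condition $\|\bm v-\bm u\|_\infty \leq r$ forces $v_k \in [0,2r]$ for every $k$. Hence $\hat{\bm S}(\bm v) = \tfrac{1}{n}\sum_k v_k\,\x_k\x_k^T$ is positive semidefinite, so $\tfrac{1}{r}\hat{\bm S}(\bm v)+\beta\bm I \succeq \beta\bm I$, and inverting gives $\|\bm F(\bm v)\| \leq 1/\beta$ deterministically and uniformly in the ball.

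For part \ref{propF2}, it suffices to upper bound $\lambda_{\max}\!\bigl(\tfrac{1}{r}\hat{\bm S}(\bm v)+\beta\bm I\bigr)$. Using again $v_k\in[0,2r]$, we have the domination $\hat{\bm S}(\bm v)\preceq 2r\,\hat{\bm S}$, with $\hat{\bm S} = \tfrac{1}{n}\sum_k \x_k\x_k^T$, so
\begin{equation*}
\lambda_{\max}\!\left(\tfrac{1}{r}\hat{\bm S}(\bm v)+\beta\bm I\right) \leq 2\|\hat{\bm S}\| + \beta.
\end{equation*}
By Lemma~\ref{lem:scaling} we may assume $\x_k = \Sp^{1/2}\bm\xi_k$ with $\bm\xi_k\overset{iid}\sim N(\bm 0,\bm I)$, so $\|\hat{\bm S}\|\leq s_{\max}\|\bm T_n\|$ where $\bm T_n=\tfrac{1}{n}\sum_k \bm\xi_k\bm\xi_k^T$. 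Applying Lemma~\ref{lem:davidson_bound} with $t=\sqrt{\gamma}$ (for $p/n$ close to $\gamma$) yields $\|\bm T_n\|\leq(1+2\sqrt{\gamma})^2$ with probability at least $1-\exp(-n\gamma/2)\geq 1-\exp(-cp)$ for a suitable constant $c>0$. On this event Eq.~\eqref{eq:lambda_min_F} holds uniformly in $\bm v\in B_r(\bm u)$.

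For part \ref{propF3}, I would use the resolvent identity
\begin{equation*}
\bm F(\bm u)-\bm F(\bm v) \;=\; \bm F(\bm u)\cdot\tfrac{1}{r}\bigl(\hat{\bm S}(\bm v)-\hat{\bm S}(\bm u)\bigr)\cdot \bm F(\bm v),
\end{equation*}
together with the elementary estimate $\|\hat{\bm S}(\bm v)-\hat{\bm S}(\bm u)\|\leq \|\bm v-\bm u\|_\infty\,\|\hat{\bm S}\|$, which follows from $\hat{\bm S}(\bm v)-\hat{\bm S}(\bm u)=\tfrac{1}{n}\sum_k(v_k-u_k)\x_k\x_k^T$ by testing against a unit vector. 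Choosing $c'=r/2$ ensures $B_{c'}(\bm u)\subset B_r(\bm u)$, so part \ref{propF1} supplies $\|\bm F(\bm u)\|,\|\bm F(\bm v)\|\leq 1/\beta$, and the same event from part \ref{propF2} supplies $\|\hat{\bm S}\|\leq s_{\max}(1+2\sqrt{\gamma})^2$. Multiplying the three factors gives
\begin{equation*}
\|\bm F(\bm u)-\bm F(\bm v)\| \;\leq\; \frac{s_{\max}(1+2\sqrt{\gamma})^2}{\beta^2\,r}\,\|\bm v-\bm u\|_\infty,
\end{equation*}
which is precisely Eq.~\eqref{eq:Diff_Fu_Fv} with $C=s_{\max}(1+2\sqrt{\gamma})^2/(\beta^2 r)$.

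I do not foresee a serious obstacle. The one thing to be careful about is that the high-probability statements in parts \ref{propF2} and \ref{propF3} need to hold simultaneously and uniformly in $\bm v$, but this is automatic because the only randomness used beyond the deterministic part \ref{propF1} is the single event $\{\|\bm T_n\|\leq(1+2\sqrt{\gamma})^2\}$ delivered by Lemma~\ref{lem:davidson_bound}; the bounds on $\bm F(\bm v)$ and on $\hat{\bm S}(\bm v)-\hat{\bm S}(\bm u)$ are then uniform in $\bm v$ on the ball by the monotonicity and linearity arguments above.
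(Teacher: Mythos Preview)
Your argument is correct, and for parts \ref{propF1} and \ref{propF2} it is essentially identical to the paper's. For part \ref{propF3}, the paper instead writes $\bm F(\bm v)^{-1}=\bm F(\bm u)^{-1}+\Delta\bm M$ with $\Delta\bm M=\tfrac{1}{r}\bigl(\hat{\bm S}(\bm v)-\hat{\bm S}(\bm u)\bigr)$ and invokes Stewart's perturbation bound $\|(\bm M+\Delta\bm M)^{-1}-\bm M^{-1}\|\le \|\bm M^{-1}\|^2\|\Delta\bm M\|/(1-\|\bm M^{-1}\|\|\Delta\bm M\|)$, which forces them to first check the smallness condition $\|\bm M^{-1}\|\|\Delta\bm M\|<1$ and hence to pick $c'$ small enough on the high-probability event. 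Your route via the exact resolvent identity $\bm F(\bm u)-\bm F(\bm v)=\bm F(\bm u)\,\Delta\bm M\,\bm F(\bm v)$ is a bit cleaner: by reusing the deterministic bound $\|\bm F(\bm v)\|\le 1/\beta$ from part \ref{propF1} on \emph{both} factors, you sidestep the smallness condition entirely and may take any $c'\le r$ (your choice $c'=r/2$ is fine since $r\ge r_{\min}>0$ by Proposition~\ref{prop:r_finite}). The two approaches yield Lipschitz constants of the same order, so the difference is cosmetic, but yours is the more economical one.
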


\begin{proof}[Proof of Lemma \ref{lem2}]
Recall that for
an invertible matrix $\bm A(\bm v)$ that depends on a vector
$\bm v$,
$
\frac{\partial(\bm A^{-1})}{\partial v_i} = -\bm A^{-1}\frac{\partial\bm
A}{\partial v_i}\bm A^{-1}.
$
Then, differentiating $g(\bm v)_i$ in Eq.~\eqref{eq:gee}
with respect to $v_i$ gives that $\nabla g(\bm v)=\bm I - \bm B (\bm v)$, where
\bea
\bm B(\bm v)_{ij} = \frac{1}{1+\alpha}\frac{p}{n}\frac{\F(\bm v)_{ij}^2}{\F(\bm v)_{ii}^2}, \qquad 1\leq i,j\leq n
    \label{eq:Grad_g}
\eea
and $\F(\bm v)_{ij}=\x_i^T\F(\bm v)\x_j$.
With this expression for $\nabla g(\bm v)$,
\begin{equation}
\|\nabla g(\bm u)-\nabla g(\bm v)\|_{\max}=\max_{i,j}
\frac1{1+ \alpha}\frac{p}{n}
\left|
\frac{\F(\bm v)_{ij}^2}{\F(\bm v)_{ii}^2}-\frac{\F(\bm u)_{ij}^2}{\F(\bm
u)_{ii}^2}\right|.
        \label{eq:diff_grad_g}
\end{equation}
By the triangle inequality,
\bea
\left|
\frac{\F(\bm v)_{ij}^2}{\F(\bm v)_{ii}^2}-\frac{\F(\bm u)_{ij}^2}{\F(\bm u)_{ii}^2}\right|
                & \leq &
\F(\bm v)_{ij}^2\left|
\frac{1}{\F(\bm v)_{ii}^2}-\frac{1}{\F(\bm u)_{ii}^2}\right|
+
\left|
\frac{\F(\bm v)_{ij}^2-\F(\bm u)_{ij}^2}{\F(\bm u)_{ii}^2}\right| \nonumber \\
&=& q_1+q_2.
    \nonumber
\eea
We now bound each of these two terms. For the first one,
\[
q_1 \leq\F(\bm v)_{ij}^2 \frac{
        |\F(\bm u)_{ii}-\F(\bm v)_{ii}|\cdot \left(\F(\bm u)_{ii}+\F(\bm v)_{ii}\right) }
{\F(\bm u)_{ii}^2\F(\bm v)_{ii}^2}.
\]
For any $\bm v$ for which \(\bm F(\bm v)\) is defined, $|\F(\bm v)_{ij}| \leq \|\bm F(\bm v)\|\|\x_i\|\|\x_j\|$ and $\F(\bm v)_{ii} \geq \lambda_{\min}(\F(\bm v)) \|\x_i\|^2$. Combining these with parts \ref{propF1} and \ref{propF2}
of Lemma~\ref{lemma:prop_F},
\[
q_1
\leq
\frac{1}{c_F^4} \frac{ \|\x_j\|^4}{\|\x_i\|^{4} }  (\|\F(\bm u)\|+
        \|\F(\bm v )\|)
\|\F(\bm u)-\F(\bm v)\|
\leq
\frac{2}{\beta c_F^4} \frac{ \|\x_j\|^4}{\|\x_i\|^{4} }
                 \|\F(\bm u)-\F(\bm v)\|
\]
and similarly
\[
q_2 \leq \frac2{\beta c_F^2}\frac{\|\x_j\|^2}{\|\x_i\|^2}\|\F(\bm u)-\F(\bm
v)\|.
\]
Finally, we write $\x_j=\Sp^{1/2}\bm \xi_j$ 
with $\bm \xi_j\sim N(0,\bI)$. Hence, $\|\x_j\|^2=\bm\xi_j^T\Sp\bm\xi$ is a quadratic form tightly concentrated around $\tr(\Sp)=p$. Therefore, w.h.p., $\|\x_j\|^2/\|\x_i\|^2 \leq 2$.
Next, Eq.~\eqref{eq:Diff_Fu_Fv} implies that w.h.p. $\| \F(\bm u)-\F(\bm v)\|\leq C\|\bm v-\bm u\|_\infty$.
A union bound on all $p^2$ terms in Eq.~(\ref{eq:diff_grad_g}) concludes the proof of the lemma.
\end{proof}

\begin{proof}[Proof of Lemma \ref{lemma:prop_F}]
Part \ref{propF1}: For any $\bm v \in \R^n$ with $\|\bm v-\bm u\|_\infty \le r$, all entries
$v_j\geq 0$, so  $\hat{\bm S}(\bm v) \in S_{+}^p$ and thus $\|\F(\bm v)\|\leq 1/\beta$.

Part \ref{propF2}: If $\|\bm v-\bm u\|_\infty \le r$, then \(v_{j}\leq 2r\) for all $1 \leq j \leq n$.
Thus,
\[
\lambda_{\min}(\F(\bm v)) \geq
        \frac1{\lambda_{\max}(\tfrac{1}{nr}\sum_k v_k\x_k\x_k^T)+\beta}
\geq
        \frac{1}{2\lambda_{\max}(\tfrac1n\sum_k \x_k\x_k^T)+\beta}.
\]
Eq.~\eqref{eq:lambda_min_F} follows since by Lemma~\ref{lem:davidson_bound}, with probability at least $1-\exp(-cp)$, the largest eigenvalue is smaller than $s_{\max}(1+2\sqrt{\gamma})^2$.

Part \ref{propF3}: Using the Hadamard product $\circ$, $\bm d_{\bm v}\in\{-1,1\}^n$ and $\bm \epsilon = \langle\epsilon_1,\dots,\epsilon_n\rangle$
with $\epsilon_1,\dots,\epsilon_p\ge 0$, we express $\bm v$ as
$\bm v = \bm u + r\bm d_{\bm v}\circ \bm \epsilon.$
Next, we apply the following classical
perturbation result \citep[Eq. (1.2)]{stewart}: Let \(\bm M\) be an invertible matrix $\bm M$, then for any
matrix $\Delta\bm M$ with $\|\bm M^{-1}\|\|\Delta\bm M\|<1$,
\bea
\left\|(\bm M+\Delta \bm M)^{-1}-\bm M^{-1}   \right\| \le \frac{\|\bm
M^{-1}\|^2 \|\Delta\bm M\|  }{1-\|\bm M^{-1}\| \,\|\Delta \bm M\| }.
    \label{eq:demm}
\eea
We use this inequality with $\bm M=\bm F(\bm u)^{-1} =  \hat{\bm S}(\bm 1)+\beta\bm I$ and $\Delta \bm M =
\hat{\bm S}(\bm d_{\bm v} \circ \bm\epsilon) $, so that $\bm F(\bm v)-\bm F(\bm u)= (\bm M+\Delta
\bm M)^{-1}-\bm M^{-1}$.

We first verify that the condition $\|\bm
M^{-1}\|\|\Delta\bm M\|<1$ holds.
Combining the non-negativity of the elements
of $\bm \epsilon$, the fact that for any $\bm P, \bm Q \in S_{+}^p$,
$\|\bm P-\bm Q\|\le\|\bm P+\bm Q\|$  and  Lemma~\ref{lem:davidson_bound},
we conclude that with probability $\ge1-\exp(-cp)$,
$$
\|\Delta \bm M\|
=\| \hat{\bm S}(\bm d_{\bm v}\circ \bm \epsilon)\|
\le \| \hat{\bm S}(\bm \epsilon) \|
\le    \| \hat{\bm S}(\bm 1) \| \cdot  \|\bm \epsilon\|_{\infty}
\le s_{\max}\left(1+2\sqrt{\gamma}\right)^2
\|\bm \epsilon\|_{\infty}.
$$
Next, by definition, 
$
\|\bm M^{-1}\| = \|\bm F(\bm u)\| \leq 1/\beta$.
Thus,
$\|\bm M^{-1}\|\|\Delta\bm M\| \le \tfrac1{\beta}s_{\max}\left(1+2\sqrt{\gamma}\right)^2\|\bm
\epsilon\|_{\infty}$. So, there is a constant $c'\le1$ such that with probability
$1-\exp(-cp)$, for all for
$\|\bm v-\bm u\|_{\infty} \le c'$, $\|\bm
M^{-1}\|\|\Delta\bm M\|<1$.
 Eq.~\eqref{eq:demm} and the definitions of $\bm M$ and $\Delta\bm M$ imply the desired bound.
\end{proof}


\subsection{Proof of Lemma~\ref{lem3}} \label{pflem3}
%
%
Recall that $\nabla g(\bm u)=\bm I-\bm B,$ with $\bm B$ given in
Eq.~\eqref{eq:Grad_g}.
Since $1+{\alpha}>\sup_n p/n$, then
$\operatorname{diag}(\bm B) =
\kappa \bm I$, with $\kappa= p/(n(1+\alpha))\in (0,1)$. Therefore, $\nabla g(\bm
u)=\left(1-\kappa\right)\bm I-\bm B_0$, where $\operatorname{diag}(\bm B_0)=\bm 0$, and
\bea
\|\left(\nabla g(\bm u) \right)^{-1}
\|_\infty&=&\tfrac{1}{1-\kappa}\left\|\sum_{k=0}^\infty
\Big(\tfrac{1}{1-\kappa}\Big)^k \bm
B_0^k \right\|_\infty \leq \sum_{k=0}^\infty \Big(\tfrac1{1-\kappa}\Big)^{k+1}\|\bm B_0\|_\infty^k.
\nonumber
\eea
Suppose that for some fixed $\lambda\in(0,1)$
\bea
\Pr\left( \|\bm B_0\|_{\infty} > \lambda(1-\kappa) \right) \le Cpe^{-cp}, \label{B0less}
\eea
then the lemma follows, since with probability at least \(1-Cpe^{-cp}\)
\[
\|\left(\nabla g(\bm u) \right)^{-1}\|_\infty
\leq \sum_{k=0}^\infty \left(\tfrac1{1-\kappa}\right)^{k+1} \bm
(1-\kappa)^k\lambda^k = \frac{1}{(1-\lambda)(1-\kappa)}.
\]

It suffices to prove Eq.~\eqref{B0less}.
To this end, from Eq.~(\ref{eq:Grad_g}), with \(\bm F=\bm F(\bm u)\) and $\hat{\bm S}_{-i}=\frac1n\sum_{j\neq i}\x_j\x_j^T$,
\bea
\sum_{j=1}^n(\bm B_0)_{ij} &=& \frac{1}{1+\alpha}\frac p n\sum_{j\neq i}^n
\frac{\left(\bm x_i^T\bm F \bm x_j\right)^2}
{\left(\bm x_i^T\bm F \bm x_i\right)^2}
=\frac{1}{1+\alpha}\frac p n \frac{\bm x_i^T\bm F\left(\sum_{j\neq i}^n \bm x_j\bm x_j^T\right)
\bm F \bm x_i}
{\left(\bm x_i^T\bm F\bm x_i\right)^2} \nonumber \\
&=& \frac{p}{1+\alpha}  \frac{\bm x_i^T\bm F\hat{\bm S}_{-i}\bm F \bm x_i}
 {\left(\bm x_i^T\bm F\bm x_i\right)^2}
= \frac{p}{1+\alpha} \frac{ A_1}{A_2}.
    \label{eq:Bnot}
\eea
Recall that \(\bm F=\bm F(\bm u)=(\hat{\bm S}+\beta\bm I)^{-1}\) and denote \(\bm F_{-i}=(\hat{\bm S}_{-i}+\beta\bm I)^{-1}\).
By the Sherman-Morrison formula, the numerator $A_1$ may be rewritten as
\bea
A_1&=&\bm x_i^T\bm \left(\bm F_{-i}-\frac1n\frac{\bm F_{-i}\bm x_i\bm x_i^T\bm F_{-i}}{1+\frac1n\bm x_i^T\bm F_{-i}\bm x_i}\right)\bm{\hat S}_{-i}\left(\bm F_{-i}-\frac1n\frac{\bm F_{-i}\bm x_i\bm x_i^T\bm F_{-i}}{1+\frac1n\bm x_i^T\bm F_{-i}\bm x_i}\right)\bm x_i
    \nonumber \\
&=& \bm x_i^T \bm F_{-i}\bm{\hat S}_{-i}\bm F_{-i}\bm x_i
- \frac2n \frac{\left(\bm x_i^T \bm F_{-i}\bm x_i\right)\left(\bm x_i^T \bm F_{-i}\bm{\hat S}_{-i}\bm F_{-i} \bm
x_i\right)}{1+\frac1n\bm x_i^T \bm
F_{-i}  \bm x_i}
    \nonumber \\
&&+\frac1{n^2}\left(\frac{1}{1+\frac1n\bm x_i^T \bm F_{-i}  \bm
x_i}\right)^2\bm \left(\bm    x_i^T\bm F_{-i}\bm x_i\right)^2\left( \bm x_i^T \bm
F_{-i} \bm{\hat S}_{-i} \bm F_{-i}\bm x_i \right).
    \nonumber
\eea
Next, recall that by
Eq.~\eqref{eq:Qdef}, with \(\x_i=\Sp^{1/2}\bm \xi_i\), it follows that  $Q_i = \frac1p\bm x_i^T
\bm F_{-i}\bm x_i$.
With  $R=\frac1p\bm x_i^T \bm F_{-i}\hat{\bm S}_{-i}\bm
F_{-i}\bm
x_i,$ the term \(A_{1}\) can be simplified to
\[
A_1 = pR\left(1 - \frac2n \frac{pQ_i}{1+\frac pn Q_i}
+\frac1{n^2}\left(\frac{pQ_i}{1+\frac pn Q_i}\right)^2 \right).
\]

Similarly, by Eq.~\eqref{eq:Qeq}, $A_2=\left( \bm x_i^T\bm F\bm
x_i\right)^2=(\bm\xi_i^T\bm E\bm \xi_i)^{2}=\left(\frac{p Q_i}{1+\frac pn Q_i}\right)^2$.
Thus,
\bea
\frac{p}{1+\alpha}\frac{A_1}{A_2}&=&\frac{\left(1+\frac pn Q_i \right)^2 p^2R
\left( 1-\frac2n\frac{p Q_i}{1+\frac pn Q_i} + \frac1{n^2}\left(\frac1{1+\frac
pn Q_i}\right)^2 p^2Q_i^2 \right)}{(1+\alpha) p^2Q_i^2}
    \nonumber \\
&=& \frac{R}{ Q_i^2}\frac{1}{1+\alpha}\left( \left(1+\frac pn Q_i \right)^2 -
\frac 2n pQ_i \left(1+\frac pn Q_i \right)+\frac{p^2}{n^2}  Q_i^2  \right)
    \nonumber \\
&=&  \frac{R}{ Q_i^2}\frac{1}{1+\alpha}  \left(\left(1+\frac pn Q_i\right)-\frac{p}{n}  Q_i\right)^2
    = \frac{R}{ Q_i^2}\frac{1}{1+\alpha}.
    \label{rq2}
\eea

Eqs. \eqref{eq:Bnot} and \eqref{rq2} give that 
$\sum_{j=1}^n (\bm
B_0)_{ij} = \frac{R}{ Q_i^2}\frac{1}{1+\alpha}$. Taking a union bound,
\bea
\Pr\left( \|\bm B_0\|_{\infty} > \lambda(1-\kappa) \right) \le  p\Pr\left(
\tfrac1{1+\alpha}\tfrac{R}{Q_i^2} > \lambda(1-\kappa) \right). \label{B0kappa}
\eea
To estimate the RHS of Eq.~\eqref{B0kappa}
we first show that
$R/Q_i<1$. Let $\bm U\bm D\bm U^T$, with $\bm D=\text{diag}(d_1,\dots,d_p)$ be the eigendecomposition of $\hat{\bm
S}_{-i}$. Then
\bea
\frac R{Q_i} &=& \frac{\bm x_i^T \bm F_{-i}\hat{\bm S}_{-i}\bm
F_{-i}\bm
x_i}{\bm x_i^T \bm F_{-i}\bm x_i} \nonumber
= \frac{\bm x_i^T  (\bm U\bm D\bm U^T +  \beta \bm I)^{-1}  \bm U\bm D\bm U^T (\bm
U\bm D\bm U^T  + \beta   \bm I)^{-1} \bm x_i}{\bm x_i^T   (\bm
U\bm D\bm U^T  + \beta   \bm I)^{-1}  \bm
x_i} \\ \nonumber
&=& \frac{(\bm U^T\bm x_i)^T   (\bm D+\beta\bm I)^{-1}    \bm D
(\bm D+\beta\bm I)^{-1}     (\bm U^T\bm x_i) }{(\bm U^T\bm
x_i)^T    (\bm D+\beta\bm I)^{-1}        (\bm U^T\bm x_i) } \\
&=&\frac{(\bm U^T\bm x_i)^T   \text{diag}\left(\frac{d_i}{(d_i+\beta)^2}\right)     (\bm U^T\bm x_i) }{(\bm U^T\bm
x_i)^T    \text{diag}\left(\frac{1}{d_i+\beta}\right)       (\bm U^T\bm
x_i) }\le
                \frac{d_1}{d_1+\beta},
    \nonumber \label{fracRQ}
\eea
where \(d_{1}=\|\bm S_{-i}\|\). By Lemma \ref{lem:davidson_bound}, with high probability $d_1<s_{\max}(1+2\sqrt{\gamma})^2$. Hence, there exists a  $\delta>0$, so that  w.h.p. $R/Q_i<1/(1+\delta)$. Let $\lambda =
\left(\frac{1}{1+\delta}\right)^2<1$, then by Eq.~\eqref{eq:Qtail},
\bea
&&\Pr\left( \frac{R}{(1+\alpha)Q_i^2}> \lambda(1-\kappa) \right) \le  \Pr\left(
Q_i(1+\alpha) < \frac{1+\delta}{1-\kappa} \right) \nonumber \\ \nonumber
&&\le \Pr\left(Q_i(1+\alpha)-\frac{1}{1-\kappa} <  \frac{\delta}{1-\kappa}
\right)\leq  Ce^{-cp}.
    \nonumber
\eea
Combining the above with Eq.~\eqref{B0kappa} implies that Eq.~\eqref{B0less} holds, as desired.

\subsection{Proof of Lemma \ref{lem:pA_trA}}
By the triangle inequality,
\bea
\left\| \frac{p\bm A}{\tr(\bm A)}  - \bm B \right\|_{\max} \le \|\bm A - \bm B\|_{\max} + \|\bm A\|_{\max}
\frac{|1-\tr(\bm A)/p|}{\tr(\bm A)/p}. \label{first_line} \nonumber
\eea
Next, observe that
 $\|\bm A\|_{\max} \le \|\bm B\|_{\max}+ \|\bm A - \bm B\|_{\max}\leq b_{\max}+1/2$,
and since $\tr(\bm B)=p$
\bea
|1-\tr(\bm A)/p| &=& \frac1p|\tr(\bm B) - \tr(\bm A)| =  \frac1p|\tr(\bm B - \bm A)|  \le \|\bm A - \bm B\|_{\max}.
    \nonumber
\eea
Hence,  $\tfrac1{\tr(\bm A)/p} \le \frac{1}{1-\|\bm A - \bm B\|_{\max}} \leq 2$. Combining these proves the lemma.


\subsection{TME with outliers}
\label{sec:estimate_mu_sigma}

Consider an $\epsilon$-contamination model, where $(1-\epsilon)n$ of the samples come from an elliptical distribution with shape matrix $\bm S_{in}$, and the remaining $\epsilon n$ from an elliptical distribution with shape matrix $\bm S_{out}$. We conjecture that under suitable assumptions, for $p,n\gg 1$, the weights of the TME concentrate around two values, $w_{in}$ and $w_{out}$, for the inliers and outliers, respectively.

For our procedure to select the inliers, we further assume that the inlier weights are approximately Gaussian distributed around $w_{in}$ with an unknown standard deviation $\sigma_{in}$. To estimate $w_{in}$ and \(\sigma_{in}\) we compute a non-parametric density estimate $\hat f(w)$ of all \(n\) weights (using MATLAB's \texttt{ksdensity} procedure). Then $w_{in} = \arg\max_w \hat f(w)$ is the weight with highest estimated density. Next, for some $r$ we find the largest interval $[w_L,w_R]$ around $w_{in}$ so that for $w\in[w_L,w_R]$ we have  $\hat f(w)\ge r \max \hat f(w)=r \hat f(w_{in})$. Then, given our assumption that the weights are Gaussian distributed,  $\sigma_{in} = \frac12(w_R-w_L)/\sqrt{-2\log(r)}$. In our simulations we used $r=0.7$, which
worked well across all different contamination levels.

Of course, one might obtain improved estimates of these quantities, as well as the unknown $\epsilon$, for example by fitting a mixture of two Gaussians to the vector of weights. However, for our illustrative example, we opted for the above simpler procedure.

\bibliographystyle{imsart-nameyear}
\bibliography{thresh}

\printindex

\end{document}